\newtheorem{theorem}{Theorem}[section]
\newtheorem{lemma}[theorem]{Lemma}
\newtheorem{proposition}[theorem]{Proposition}
\newtheorem{definition}[theorem]{Definition}
\newcommand{\be}{\begin{equation}}
\newcommand{\ee}{\end{equation}}
\newcommand{\ba}{\begin{array}}
\newcommand{\ea}{\end{array}}
\newcommand{\bea}{\begin{eqnarray}}
\newcommand{\eea}{\end{eqnarray}}
\newcommand{\bee}{\begin{eqnarray*}}
\newcommand{\eee}{\end{eqnarray*}}
\newcommand{\pr}{\partial}
\newtheorem{thm}{Theorem}[section]
\def\eps{\epsilon}%
\def\tensor{\,\raise2pt\hbox{${}_{\otimes}$}\,}
\def\fdg{\,:\,}
\def\ptl{\partial}
\def\rest#1{\raise-2pt\hbox{${\lfloor_{#1}}$}}
\def\cal#1{\mathcal{#1}}
\def\ip#1#2{\langle#1,#2\rangle}
\def\grad{{\nabla}}
\newcommand{\leftexp}[2]{{\vphantom{#2}}^{#1}{#2}}
\def\halb{\frac{1}{2}}
\begin{document}

%
%
%
%
%
%
%
%
%

\title[]
 {On Scattering for Small Data of 2+1 \\ Dimensional  Equivariant Einstein-Wave Map System}

\author[B. Dodson]{Benjamin Dodson}
\address{Department of Mathematics, Johns Hopkins University, 3400 N. Charles Street, Baltimore, MD-21218, USA}
\email{dodson@math.jhu.edu}
\thanks{(BD) is supported by NSF by grant no. DMS-1500424 and (NG) is supported by Deutsche Forschungsgemeinschaft (DFG) Postdoctoral Fellowship GU1513/1-1 to Yale University}
\author[N. Gudapati]{Nishanth Gudapati}
\address{Department of Mathematics, Yale University, 10 Hillhouse Avenue, New Haven, CT-06511, USA}
\email{nishanth.gudapati@yale.edu}




\subjclass{Primary 35L70; Secondary 83C05}

\keywords{Einstein's equations, critical wave maps, dispersive PDE}

\date{January 1, 2004}

\begin{abstract}

We consider the Cauchy problem of 2+1 equivariant wave maps coupled to Einstein's equations of general relativity and prove that two separate (nonlinear) subclasses of the system disperse to their corresponding linearized equations in the large. Global asymptotic behaviour of 2+1  Einstein-wave map system is relevant because the system occurs naturally in 3+1 vacuum Einstein's equations. 

\end{abstract}
\maketitle

\section{Background and Introduction}
Let $(M, g)$ be a regular, globally hyperbolic, spatially asymptotically flat, rotationally symmetric $2+1$ dimensional Lorentzian spacetime and $(N,h)$ be a surface of revolution with the generating function $f$, then the Einstein-equivariant wave map system is defined as follows 
\begin{subequations}\label{eewm}
\begin{align} 
\mathbf{E}_{\mu \nu} =&\, \mathbf{T}_{\mu \nu} \label{eq:ee} \\
\square_{g(u)} u=&\, \frac{k^2 f_u(u) f(u)}{r^2}\label{eq:ewm},
\end{align}
\end{subequations}
where $\mathbf{E}$ is the Einstein tensor of $(M, g)$, 
\begin{align}
\mathbf{T}_{\mu \nu}  \fdg= \langle  \ptl_\mu U, \ptl_\nu U \rangle_h - \halb g_{\mu \nu} \langle
\ptl^\sigma U, \ptl_\sigma U \rangle_h
\end{align}
is the energy-momentum tensor of the equivariant wave map $U\fdg (M, g) \to (N, h) $, $U \fdg= (u, k\theta)$, $\square_g$ is the covariant wave operator defined on $(M, g)$, $r$ is the area-radius function on $(M, g)$, $f_u(u)$ is the derivative of $f(u)$ with respect to $u$. $k$ is the homotopy degree of the equivariant map which shall henceforth be assumed to be $1$. Furthermore, we assume that $f$ is a smooth, odd function such that $f(0)=0$ and $f_u(0)=1$, which for instance is admitted 
by a metric on the hyperbolic 2-plane $N = \mathbb{H}^2$.
In particular,  it follows that 
\[ f(u)f_u(u) = u + u^3 \zeta(u).\]
for some smooth function $\zeta.$


\noindent Let us assume that the metric $g$ of $M$  can be represented in the following form in null coordinate system $(\xi, \eta, \theta)$ 
\begin{align}
ds^2_g = -e^{2Z (\xi, \eta)} (d\xi\,d\eta) + r^2 (\xi,\eta) d\theta^2,
\end{align}
for some function $Z (\xi, \eta)$ and the radius function $r (\xi, \eta)$
with
 \[ r=0 \quad \text{and} \quad Z=0 \quad \text{on the axis} \quad \Gamma \quad \text{of}\,\, M.\]

\noindent Furthermore, we introduce the coordinate functions $T$ and $R$ such that 
\[ T \fdg= \halb (\xi + \eta) \quad \text{and}\quad R \fdg= \halb (\xi-\eta) \]

\noindent so that $R=0$ and $ T=\xi=\eta$ on $\Gamma$.  Further suppose that 
\[ \ptl_R r=1 \quad \text{and} \quad \ptl_R Z=0 \quad \text{on the axis} \quad \Gamma \quad \text{of}\,\, M.\]
Consequently,

\begin{align}
ds^2_g = e^{2Z (T, R)} (-dT^2 + dR^2) + r^2 (T, R) d\theta^2.
\end{align}

\noindent As calculated in \cite{diss} (cf. Section 3.5) the Einstein tensor 

\[ \mathbf{E}_{\mu \nu} = \mathbf{R}_{\mu \nu} - \halb g_{\mu \nu} R_g \] in null coordinates is given by
\begin{align*}
\mathbf{E}_{\xi\xi} =& r^{-1} (2 \ptl_\xi Z \, \ptl_\xi r- \ptl^2_\xi r),\\
\mathbf{E}_{\xi\eta} =& r^{-1} \ptl_ \xi \ptl \eta\, r,\\
\mathbf{E}_{\eta \eta} =& r^{-1} (2 \ptl_\eta Z \ptl_\eta r - \ptl^2_\eta r),\\
\mathbf{E}_{\theta \theta} =& -4 r^2 \,e^{-2 Z} \ptl_\xi \ptl_\eta Z,\\
\mathbf{E}_{\xi \theta} =& 0\,\,\text{and} \\
\mathbf{E}_{\eta\theta} =& 0.
\intertext{The components of $\mathbf{T}_{\mu \nu}$  are}
\mathbf{T}_{\xi \xi} =& \ptl_\xi u \, \ptl_\xi u,\\
\mathbf{T}_{\eta \eta} =& \ptl_\eta u \, \ptl_\eta u,  \\
\mathbf{T}_{\xi \eta} =& \frac{e^{2Z}}{4} \frac{f^2(u)}{r^2}, \\\ 
\mathbf{T}_{\theta\theta} =& \frac{r^2}{2} e^{-2Z}\left ( 4\ptl_\eta  u\ptl_\xi
u + e^{2Z}\frac{f^2(u)}{r^2} \right ). 
\end{align*} 

\noindent Furthermore, noting that 
\[ \sqrt{-g} = \halb re^{2Z (T, R)},\] 
the wave operator in null coordinates can be expressed as
\begin{align}
\square_{g} u = & \frac{1}{\sqrt{-g}} \ptl^{\nu} (\sqrt{-g} \ptl_\nu u ) \notag \\
=&- 2 e^{-2Z} r^{-1} \Big(\ptl_\eta (r \ptl_\xi u) + \ptl_\xi (r
\ptl_\eta u)\Big).
\end{align}
Therefore, from \eqref{eq:ewm} 
\begin{align}\label{eq:waveunull}
 -2 e^{-2Z} r^{-1} \Big(\ptl_\eta (r \ptl_\xi u) + \ptl_\xi (r
\ptl_\eta u)\Big) = \frac{f_u(u)f(u)}{r^2}
\end{align}
Consequently, the equivariant Einstein-wave map system can be represented as follows
\begin{subequations}\label{eewmnull}
\begin{align}
r^{-1} (2 \ptl_\xi Z \ptl_\xi r - \ptl^2_{\xi} r )=& \ptl_\xi u \, \ptl_\xi u \\
r^{-1} \ptl^2_{\xi \eta} r =&\frac{e^{2Z}}{4} \frac{f^2(u)}{r^2} 
 \label{eq:waver} \\
r^{-1} (2 \ptl_\eta Z  \ptl_\eta r - \ptl^2_\eta r)=&\ptl_\eta u \, \ptl_\eta u, \\
-4 r^2 \,e^{-2 Z} \ptl^2_{\xi \eta} Z =&\frac{r^2}{2} \left ( 4e^{-2Z} \ptl_\eta  u\ptl_\xi
u + \frac{f^2(u)}{r^2} \right ) \label{eq:waveZ}\\
\square_{g(u)} u = &\frac{f_u(u)f(u)}{r^2}. \label{eq:waveu}
\end{align}
\end{subequations}

\begin{proposition}

If we define the function $V$ such that  $R V \fdg =u$, then the following statements hold 

\begin{enumerate}
\item \begin{align}
\ptl^2_{\xi \eta} u = R \, \ptl^2_{\xi \eta} V - \halb \ptl_R V.
\end{align}
\item 
\begin{align} 
\leftexp{4+1}{\square}\, V = -4 \ptl^2_{\xi \eta} V + \frac{3}{R} (\ptl_\xi - \ptl_\eta ) V.
\end{align}
\item The wave maps equation \ref{eq:waveu} reduces to  
\begin{align}\label{waveV}
\leftexp{4+1}{\square}\, V =& \left( (e^{2Z} -1) + \left(\frac{r}{R} \ptl_\eta r + \halb \right) - \left(\frac{r}{R}\ptl_\xi r - \halb \right) \right)\frac{V}{r^2} \notag \\
&\quad + 2 \ptl_\xi V \ptl_\eta \log \left(\frac{r}{R} \right) + 2 \ptl_\eta V \ptl_\xi \log \left(\frac{r}{R} \right) \notag\\
&\quad+ e^{2Z}\frac{\zeta (u)}{r^2} R^2 V^3  
\end{align}
where $ \leftexp{4+1}{\square}$ is the wave operator on $\mathbb{R}^{4+1}$.
\end{enumerate}
\end{proposition}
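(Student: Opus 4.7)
The strategy is to do all three parts by direct computation, using repeatedly that $R=\halb(\xi-\eta)$, so $\ptl_\xi R=\halb$, $\ptl_\eta R=-\halb$, and $\ptl_R=\ptl_\xi-\ptl_\eta$, $\ptl_T=\ptl_\xi+\ptl_\eta$. Part (1) should come out in a few lines: from $u=RV$ I would compute
\begin{align*}
\ptl_\xi u=\tfrac12 V+R\,\ptl_\xi V,\qquad \ptl_\eta u=-\tfrac12 V+R\,\ptl_\eta V,
\end{align*}
and then differentiate $\ptl_\xi u$ in $\eta$, using $\ptl_\eta R=-\halb$, to obtain $\ptl^2_{\xi\eta}u=R\,\ptl^2_{\xi\eta}V-\halb(\ptl_\xi V-\ptl_\eta V)=R\,\ptl^2_{\xi\eta}V-\halb\,\ptl_R V$.

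Part (2) is a change of variables for the radial $4{+}1$ d'Alembertian. On a spherically symmetric function $V=V(T,R)$ on $\mathbb{R}^{4+1}$ the operator is $\leftexp{4+1}{\square}V=-\ptl_T^2 V+\ptl_R^2 V+\tfrac{3}{R}\ptl_R V$ (the $3/R$ being the spatial dimension minus one). Since $\ptl_R^2-\ptl_T^2=(\ptl_\xi-\ptl_\eta)^2-(\ptl_\xi+\ptl_\eta)^2=-4\ptl^2_{\xi\eta}$ and $\ptl_R=\ptl_\xi-\ptl_\eta$, substitution gives the claim.

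Part (3) is the real content. I would start from \eqref{eq:waveu}, expand
\begin{align*}
\ptl_\eta(r\,\ptl_\xi u)+\ptl_\xi(r\,\ptl_\eta u)=2r\,\ptl^2_{\xi\eta}u+\ptl_\xi r\,\ptl_\eta u+\ptl_\eta r\,\ptl_\xi u,
\end{align*}
and substitute the formulas from (1) together with $f_u(u)f(u)=u+u^3\zeta(u)=RV+R^3V^3\zeta(u)$. After multiplying through by $-e^{2Z}/R$ (or equivalently dividing out the prefactor $-2e^{-2Z}r^{-1}$ and then multiplying by $e^{2Z}/R$), the principal part becomes $-4\ptl^2_{\xi\eta}V+\frac{2}{R}(\ptl_\xi V-\ptl_\eta V)$. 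To match the $4{+}1$ wave operator from part (2) I would add and subtract $\frac{1}{R}(\ptl_\xi V-\ptl_\eta V)$, producing $\leftexp{4+1}{\square}V$ on the left and an extra $\frac{1}{R}(\ptl_\eta V-\ptl_\xi V)$ on the right. The remaining right-hand side is then regrouped into three pieces: the mass-like term proportional to $V/r^2$, the first-derivative term, and the cubic self-interaction. The mass coefficient consolidates as $e^{2Z}+\tfrac{r}{R}(\ptl_\eta r-\ptl_\xi r)$, which, after artificially splitting off a $-1+\halb+\halb$, matches $(e^{2Z}-1)+\bigl(\tfrac{r}{R}\ptl_\eta r+\halb\bigr)-\bigl(\tfrac{r}{R}\ptl_\xi r-\halb\bigr)$ as written. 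The first-order terms are the crux: using $\ptl_\eta\log(r/R)=\frac{\ptl_\eta r}{r}+\frac{1}{2R}$ and $\ptl_\xi\log(r/R)=\frac{\ptl_\xi r}{r}-\frac{1}{2R}$, the combination $\frac{2}{r}(\ptl_\xi V\,\ptl_\eta r+\ptl_\eta V\,\ptl_\xi r)+\frac{1}{R}(\ptl_\xi V-\ptl_\eta V)$ collapses exactly to $2\ptl_\xi V\,\ptl_\eta\log(r/R)+2\ptl_\eta V\,\ptl_\xi\log(r/R)$. The cubic piece $e^{2Z}R^2V^3\zeta(u)/r^2$ appears directly from the $u^3\zeta(u)$ part of $f_u f$.

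The main obstacle is purely bookkeeping in part (3): recognizing the correct artificial splittings $\tfrac{2}{R}=\tfrac{3}{R}-\tfrac{1}{R}$ and $e^{2Z}=(e^{2Z}-1)+1=(e^{2Z}-1)+\halb+\halb$, so that the extra $\pm\halb$ terms pair with $\tfrac{r}{R}\ptl_\eta r$ and $\tfrac{r}{R}\ptl_\xi r$ to yield coefficients of the form $\tfrac{r}{R}\ptl_\eta r+\halb$ and $\tfrac{r}{R}\ptl_\xi r-\halb$, which vanish on the axis $R=0$ (since $\ptl_R r=1$ there) and are therefore the geometrically natural objects to isolate. No step requires anything beyond the chain rule and elementary algebra, so once the expansion is organized along these lines the identity should fall out.
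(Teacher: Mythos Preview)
Your proposal is correct and follows essentially the same route as the paper's own proof: compute $\ptl_\xi u$, $\ptl_\eta u$ from $u=RV$, expand $\ptl_\eta(r\ptl_\xi u)+\ptl_\xi(r\ptl_\eta u)$, divide through by $R$, add and subtract $\tfrac{1}{R}\ptl_R V$ to manufacture the $4{+}1$ d'Alembertian, and then regroup the first-order terms via $\ptl_\xi\log(r/R)$, $\ptl_\eta\log(r/R)$ and split the mass coefficient as $(e^{2Z}-1)+\tfrac12+\tfrac12$. One harmless slip: in the sentence ``producing \ldots an extra $\tfrac{1}{R}(\ptl_\eta V-\ptl_\xi V)$ on the right'' the sign should read $\tfrac{1}{R}(\ptl_\xi V-\ptl_\eta V)=\tfrac{1}{R}\ptl_R V$, consistent with what you correctly write two lines later.
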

\begin{proof}
The proofs of 1. and 2. immediately follow from the definitions. 
Although we shall work in the $(T, R, \theta)$ coordinates later, the proof of 3. shall be most elegant in double null coordinates.
Recall:
\begin{align}
\ptl_\xi =& \halb \left( \ptl_T + \ptl_R \right) \\
\ptl_\eta=& \halb \left(\ptl_T -\ptl_R \right) 
\end{align}  
Therefore for $u=RV$ we have, 
\begin{align}
\ptl_\xi u =&    R\ptl_\xi V + \frac{V}{2}  \\
\ptl_\eta u=&   R \ptl_\eta V -\frac{V}{2} 
\end{align}
Now consider the quantity
\begin{align}
\Big(\ptl_\eta (r \ptl_\xi u) + \ptl_\xi (r
\ptl_\eta u)\Big) =& 2r \ptl_\xi \ptl_\eta u + \ptl_\xi u \ptl_\eta r + \ptl_\eta u \ptl_\xi r \notag \\
=& 2r R  \ptl^2_{\xi \eta} V - r\ptl_R V  + R  ( \ptl_\xi V  \ptl_\eta r  + \ptl_\eta V \ptl_\xi r) \notag\\
&\quad+ \frac{V}{2}  ( \ptl_\eta r  - \ptl_\xi r ).
\end{align}
Based on our assumptions on the target manifold $(N, h)$ we have,
\begin{align}
\frac{f_u(u) f(u)}{r^2} =& \frac{u}{r^2} + \frac{\zeta(u) u^3}{r^2} 
= \frac{R}{r^2} V + \frac{\zeta(u) R^3}{r^2} V^3
\end{align}
for a smooth function $\zeta.$ Therefore, the equation \ref{eq:waveu} consecutively transforms as follows 

\begin{align*}
 -2  r^{-1} \Big(  2r R  \ptl^2_{\xi \eta} V - r\ptl_R V  + R  ( \ptl_\xi V \ptl_\eta r  + \ptl_\eta V \ptl_\xi r) + \frac{V}{2}  ( \ptl_\eta r  - \ptl_\xi r )  \Big) = e^{2Z} \frac{f_u(u)f(u)}{r^2},
\end{align*}

\begin{align}
-4 \ptl^2_{\xi \eta} V + \frac{3}{R} \ptl_R V =& \left(\frac{V}{r^2} + \frac{\zeta (u)}{r^2} R^2 V^3 \right) e^{2Z} + \frac{2}{r} (\ptl_\xi V \ptl_\eta r + \ptl_\eta V \ptl_\xi r ) \notag\\
& + \frac{V}{rR} (\ptl_\eta r - \ptl_\xi r) + \frac{1}{R} \ptl_R V \notag\\
\leftexp{4+1}{\square} V =& \left(  \frac{e^{2Z}}{r} + \frac{1}{R} (\ptl_\eta r -\ptl_\xi r)\right) \frac{V}{r} + \left(  \frac{2}{r} (\ptl_\xi V \ptl_\eta r + \ptl_\eta V \ptl_\xi r) + \frac{1}{R} \ptl_R V  \right) \notag\\
&\quad+  e^{2Z}\frac{\zeta (u)}{r^2} R^2 V^3  \notag\\
\leftexp{4+1}{\square}\, V =& \left(  \frac{e^{2Z}}{r} + \frac{1}{R} (\ptl_\eta r -\ptl_\xi r)\right) \frac{V}{r} \notag\\ 
&\quad + 2 \ptl_\xi V \ptl_\eta \log \left(\frac{r}{R} \right) + 2 \ptl_\eta V \ptl_\xi \log \left(\frac{r}{R} \right) \notag\\
&\quad+ e^{2Z}\frac{\zeta (u)}{r^2} R^2 V^3  \notag\\
\leftexp{4+1}{\square}\, V =& \left( (e^{2Z} -1) + \left(\frac{r}{R} \ptl_\eta r + \halb \right) - \left(\frac{r}{R}\ptl_\xi r - \halb \right) \right)\frac{V}{r^2} \notag \\
&\quad + 2 \ptl_\xi V \ptl_\eta \log \left(\frac{r}{R} \right) + 2 \ptl_\eta V \ptl_\xi \log \left(\frac{r}{R} \right) \notag\\
&\quad+ e^{2Z}\frac{\zeta (u)}{r^2} R^2 V^3.  
\end{align}
\end{proof}
 Thus, \eqref{waveV} is a nonlinear wave equation in the Minkowski space $\mathbb{R}^{4+1}$ which contains a critical power \footnote{In general the equation $\leftexp{n+1}{\square} \,u = u \vert u \vert^{p-1},\, u \fdg \mathbb{R}^{n+1} \to \mathbb{R}$ is critical for $p = 1+ \frac{4}{n-2}$ and $n>2$. This is because for this case the scaling symmetry of the energy matches exactly with that of the equation.}
  for a smooth function $ \zeta $ (cf. flat equivariant wave maps version \cite{jal_tah} and $(t, r, \theta)$ coordinate version \cite{diss} ).

Without loss of generality, consider the $2+1$ splitting of $M$ such that $\Sigma_0$ is the $T=0$ level set. The unit normal of $\Sigma_T$
hypersurfaces for the $\Sigma_T \hookrightarrow M $ embedding  is $\mathcal{N} \fdg =e^{-Z}\ptl_{T}$, so that $g (\mathcal{N}, \mathcal{N}) =-1$. 

In order to have well-posed initial value problem for Einstein's equations, the initial data needs to   satisfy the following constraint equations. 

\begin{subequations}\label{eq:sgwm-constraints}  
\begin{align} 
\mathbf{E}\, (\mathcal{N},\mathcal{N}) =& \mathbf{T}\, (\mathcal{N}, \mathcal{N}) \\
\mathbf{E} \, (\mathcal{N},e_i) =& \mathbf{T}\, (\mathcal{N}, e_i)
\end{align} 
\end{subequations} 
on $\Sigma_0,$ for $i=1, 2.$

Let us define the following quantities
 \begin{subequations} \label{initialdata}
 \begin{align}
 u\vert_{\Sigma_0} = u_0,&\quad \ptl_T u\vert_{\Sigma_0} =u_1, \\
 Z\vert_{\Sigma_0} =Z_0,& \quad  \ptl_T Z\vert_{\Sigma_0} =Z_1, \\
  r\vert_{\Sigma_0} =r_0, &\quad \ptl_T r \vert_{\Sigma_0} =r_1,
\end{align}
\end{subequations}
with $r_1\big \vert_{\Gamma} =0$ and $\ptl_R r_0 \big\vert_{\Gamma} =1.$
Typically, the initial constraint equations are represented in terms of the 5-tuple  $(\Sigma_0, q_0, K_0, u_0, u_1),$\footnote{where $q_0$ is the metric of $\Sigma_0$ and $K_0$ is a symmetric 2-tensor} which is directly related to \eqref{initialdata}. We are interested in the global behavior of the initial value problem of \eqref{eewm} with initial data $(\Sigma_0, q_0, K_0, u_0, u_1)$. Furthermore, we assume that the initial data is asymptotically flat as defined in \cite{AGS}. 


\noindent Define the energy as follows
\[E(t) \fdg =\int_{\Sigma_t}  \mathbf{T} (\mathcal{N}
, \mathcal{N})\, \bar{\mu}_q = \int_{\Sigma_t} \mathbf{e}\,\bar{\mu}_q \]
and let $E_0 \fdg = E (0).$ 


As a consequence of the Hardy's inequality and the aforementioned assumptions on the function $f$, the following estimates hold  
\begin{align}
 E_0 \geq& \Vert u_1 \Vert_{L^2(\mathbb{R}^2)} + \Vert u_0 \Vert_{\dot{H}^1 (\mathbb{R}^2)} \\
 \geq& \Vert V_1 \Vert_{L^2(\mathbb{R}^4)} + \Vert V_0 \Vert_{\dot{H}^1 (\mathbb{R}^4)} .
\end{align}

\noindent We are now in a position to present the Cauchy problem for the equivariant Einstein-wave map system:
\begin{equation}\label{ewmcauchy_equi}
\left. \begin{array}{rcl}
\mathbf{E}_{\mu \nu} &=&\mathbf{T}_{\mu \nu} \,\,\,\,\,\,\,\,\,\,\,\,\,\,\, \,\,\,\,\text{on}\,\, M\\
\square_g u &=& \frac{f_u(u)f(u)}{r^2} \,\,\,\,\,\,\,\, \text{on}\,\, M\\\end{array} 
\right\}
\end{equation}
with the regular, compactly supported equivariant initial data set 
\[ (\Sigma_0 ,q_0, K_0, u_0, u_1) \]
 satisfying the  constraint equations on $\Sigma_0.$ Immediately, we have the following theorem.

\begin{theorem} \label{ycb-geroch}
Let $(\Sigma_0 ,q_0, K, u_0, u_1)$ be smooth, compactly supported, equivariant initial data satisfying the constraint equations \eqref{eq:sgwm-constraints}, then there exists a regular, equivariant, globally hyperbolic maximal future development $(M, g, u)$ satisfying \eqref{ewmcauchy_equi}.
\end{theorem}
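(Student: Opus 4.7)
The plan is to follow the classical Choquet-Bruhat / Choquet-Bruhat-Geroch strategy, specialized to the equivariant class. Working in the double null coordinates $(\xi,\eta,\theta)$ already fixed in the excerpt (equivalently $(T,R,\theta)$), the system \eqref{eewmnull} becomes a coupled semilinear wave/transport system: \eqref{eq:waveZ} is a semilinear wave equation for $Z$ with source quadratic in $\partial u$ plus $f^2(u)/r^2$, \eqref{eq:waveu} is a semilinear wave equation for $u$, and \eqref{eq:waver} together with the first and third equations of \eqref{eewmnull} determine $r$ from $(u,Z)$ and the Cauchy data. Given the smooth, compactly supported equivariant initial data \eqref{initialdata}, a standard energy/Picard iteration for such a coupled system provides a smooth local-in-time solution $(u,Z,r)$ on some slab $\Sigma_{[0,t^*)}$.

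Next I would verify constraint propagation: since the twice-contracted Bianchi identity gives $\nabla^\mu\mathbf{E}_{\mu\nu}\equiv 0$ and the wave map equation \eqref{eq:ewm} yields $\nabla^\mu\mathbf{T}_{\mu\nu}=0$, the constraint quantities $\mathbf{E}(\mathcal{N},\mathcal{N})-\mathbf{T}(\mathcal{N},\mathcal{N})$ and $\mathbf{E}(\mathcal{N},e_i)-\mathbf{T}(\mathcal{N},e_i)$ satisfy a linear homogeneous first-order symmetric hyperbolic system; vanishing on $\Sigma_0$ by \eqref{eq:sgwm-constraints} forces vanishing throughout the development, so the reduced solution actually solves the full Einstein-wave map system \eqref{ewmcauchy_equi}. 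Equivariance of the development then follows from uniqueness: the data, the gauge, and the equations are invariant under the $\SO(2)$-action generated by $\partial_\theta$, so the unique local solution inherits this symmetry.

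The main technical obstacle is regularity across the axis $\Gamma=\{r=0\}$, where the source $f_u(u)f(u)/r^2$ in \eqref{eq:waveu} is \emph{a priori} singular and where the metric coefficients in the double null formulation degenerate. This is precisely what the preceding Proposition was designed to handle: passing to the variable $V=u/R$ recasts \eqref{eq:waveu} as the equation \eqref{waveV} on $4+1$ dimensional Minkowski space, in which the would-be singular factors $(e^{2Z}-1)/r^2$ and $(\tfrac{r}{R}\partial_\eta r\pm\tfrac12)/r^2$ are smooth functions of $(T,R)$ once one uses the axis expansions $r=R+O(R^3)$ and $Z=O(R^2)$ forced by the axis conditions and the $\xi\xi$/$\eta\eta$ constraints. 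Thus compactly supported equivariant Cauchy data on $\Sigma_0$ lift to smooth compactly supported Cauchy data for $V$ on $\mathbb{R}^4$, and interior regularity for \eqref{waveV} gives smoothness across $\Gamma$.

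Finally, to pass from the local smooth equivariant development to the \emph{maximal} one, I would invoke the Choquet-Bruhat-Geroch argument: consider the partially ordered set of all smooth, equivariant, globally hyperbolic developments of $(\Sigma_0,q_0,K_0,u_0,u_1)$ extending the initial data, ordered by isometric embedding respecting the equivariant structure. The set is non-empty by the local step, every chain has an upper bound obtained by gluing via the geometric uniqueness theorem (two developments agree on the intersection of their domains of dependence), and Zorn's lemma delivers a maximal element $(M,g,u)$, which is the desired regular, equivariant, globally hyperbolic maximal future development.
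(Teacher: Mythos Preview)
The paper does not provide its own proof of this theorem; immediately after the statement it simply writes ``Theorem~\ref{ycb-geroch} is a classic result of Choquet-Bruhat and Geroch~\cite{Bruhat_Geroch_classic}'' and moves on. Your sketch follows exactly that classical strategy (local existence in a fixed gauge, constraint propagation via the Bianchi identity and $\nabla^\mu\mathbf{T}_{\mu\nu}=0$, then Zorn's lemma on the partially ordered set of developments), and in fact goes further than the paper by explicitly addressing the equivariance-specific issue of axis regularity through the $V=u/R$ lift to $\mathbb{R}^{4+1}$ from the preceding Proposition. So your approach is the same as what the paper cites, only spelled out in more detail than the paper itself offers.
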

Theorem \ref{ycb-geroch} is a classic result of Choquet-Bruhat and Geroch \cite{Bruhat_Geroch_classic} . This beautiful, seminal theorem in mathematical general relativity allows us to speak about the future of the initial data but does not shed light on the global structure of $(M, g, u).$ 
As a consequence of the final result of \cite{AGS} (cf. Theorem 1.8) the following statement holds

\begin{theorem}[Global regularity of equivariant Einstein-wave maps]\label{thm:main-first}
Let $E_0 < \eps^2$ for $\eps $ sufficiently small and let
$(M, g, u)$ be the maximal Cauchy development of an asymptotically flat, compactly supported, regular Cauchy data set for the $2+1$ equivariant Einstein-wave map problem \eqref{ewmcauchy_equi} with target $(N, h)$ satisfying
\begin{equation} \label{eq:nospherecond-first} 
\int_0^s f(s') ds' \to \infty \quad \text{for} \quad s \to \infty.
\end{equation} 
Then $(M, g, u)$ is regular and causally geodesically complete.
\end{theorem}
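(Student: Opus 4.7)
The strategy I would pursue is to combine the reduction of the proposition above with the small-energy theory of energy-critical wave equations in $\mathbb{R}^{4+1}$, coupled to a bootstrap for the geometric quantities $r$ and $Z$. Since the hypothesis $E_0 < \varepsilon^2$ controls the $\dot H^1\times L^2$ norm of $(V_0,V_1)$ in $\mathbb{R}^{4+1}$ via the Hardy-type inequalities noted just before the theorem, the natural plan is to treat \eqref{waveV} as a perturbation of the free $4{+}1$ wave equation, and simultaneously to treat $r$ and $e^{2Z}$ as perturbations of their Minkowski values $R$ and $1$, respectively, driven entirely by the matter. The condition \eqref{eq:nospherecond-first} enters as the ``no-sphere'' assumption on the target that rules out nontrivial finite-energy harmonic maps $S^2\to N$ and is what allows one to close a small-energy argument despite the criticality.

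First, I would set up a bootstrap on a time interval $[0,T_*)$ of the form $\|r/R-1\|+\|Z\|\lesssim\varepsilon$ in suitable null-weighted norms, together with Strichartz-type estimates for $V$ on $\mathbb{R}^{4+1}$ of size $O(\varepsilon)$. The transport/hierarchy equations in \eqref{eewmnull}, i.e.\ the $\mathbf{E}_{\xi\xi}$ and $\mathbf{E}_{\eta\eta}$ equations for $r$ and the semilinear equation \eqref{eq:waveZ} for $Z$, are driven by quadratic expressions in $\partial u$ and by $f^2(u)/r^2$. Integrating along null directions from the axis $\Gamma$, where one has the normalization $r=0,\ \partial_R r=1,\ Z=0,\ \partial_R Z=0$, the smallness of the matter energy propagates to smallness of $(r/R-1, Z)$, provided one uses the constraint equations \eqref{eq:sgwm-constraints} on $\Sigma_0$ to start with asymptotically flat, small data. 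This is the standard monotone scheme pioneered in the Andersson--Gudapati--Szeftel framework.

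Next, with the geometric smallness in hand, the coefficient of $V/r^2$ on the right-hand side of \eqref{waveV} — namely $(e^{2Z}-1) + (\tfrac{r}{R}\partial_\eta r+\tfrac12) - (\tfrac{r}{R}\partial_\xi r-\tfrac12)$ — vanishes at $Z=0,\ r=R$ and is therefore $O(\varepsilon)$. Similarly, the transport coefficients $\partial_{\xi,\eta}\log(r/R)$ are $O(\varepsilon)$, and the cubic term $e^{2Z}\zeta(u)R^2V^3/r^2$ is the genuinely energy-critical nonlinearity $\sim V^3$ in $\mathbb{R}^{4+1}$ up to $O(\varepsilon)$ perturbation. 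One then runs the small-data energy-critical theory for $\Box V=V^3$ in $\mathbb{R}^{4+1}$ (Strichartz estimates plus a contraction/continuity argument) treating the remaining terms as small perturbations to deduce a global Strichartz bound, hence $T_*=\infty$ and scattering for $V$. The main obstacle, in my view, is closing the bootstrap simultaneously for the geometry and the matter: the coupling is quasilinear through $e^{2Z}$ in the d'Alembertian and through $r$ in the null-frame decomposition, so one needs weighted estimates adapted to the outgoing/incoming null foliation that are strong enough both to feed into the $\mathbb{R}^{4+1}$ Strichartz machinery and to bound the integrals defining $r$ and $Z$.

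Finally, for causal geodesic completeness, I would argue that the global-in-time bounds on $r/R,\ Z$ imply that the area-radius $r$ satisfies $r\ge cR$ on every outgoing null ray, that no trapped surfaces form, and that $e^{2Z}$ stays bounded above and below. Standard affine-parameter comparison along timelike and null geodesics in the metric $e^{2Z}(-dT^2+dR^2)+r^2 d\theta^2$ then yields completeness of the maximal development $(M,g,u)$, matching the conclusion of \cite{AGS} Theorem~1.8 invoked in the statement.
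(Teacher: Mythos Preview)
The paper does not actually prove this theorem: it is stated as a direct consequence of Theorem~1.8 in \cite{AGS}, with no argument given in the present paper beyond the citation. So there is no ``paper's own proof'' to compare against here.

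Your sketch is a reasonable outline of the strategy that underlies \cite{AGS}: reduce via the substitution $u=RV$ to a perturbed critical equation in $\mathbb{R}^{4+1}$, run a bootstrap on the geometric quantities $r/R$ and $Z$ using the null constraint equations integrated from the axis, and close via small-data energy-critical estimates for $V$. You correctly identify the role of the target condition \eqref{eq:nospherecond-first} and the mechanism for geodesic completeness. That said, what you have written is a strategic outline rather than a proof: the delicate part of \cite{AGS} is precisely the construction of function spaces in which the bootstrap closes simultaneously for the matter and the geometry, and your proposal does not specify these norms or verify that the perturbative terms are actually controllable in them. For the purposes of this paper, however, the correct move is simply to cite \cite{AGS}, which is what the authors do.
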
 

Actually, as a consequence of the Theorem 5.1 in \cite{AGS} (also Theorem 1.3.1 in \cite{diss}), 
Theorem 1.8  in \cite{AGS} also holds without the smallness restriction on the initial energy, with
the following additional condition on the target manifold $ (N, h)$
\begin{align}
f_s(s)f(s)s + f^2(s) >0 \quad \text{for} \quad s>0.
\end{align} 
Theorem 1.8 in \cite{AGS} carried forward the program initiated in \cite{diss} to understand
global behavior of the 2+1 wave maps coupled to Einstein's equations. The motivation to study 2+1
Einstein-wave map system comes from the fact that the system arises naturally in 3+1 vacuum Einstein's equations with one isometry group (see \cite{diss} for a detailed discussion). 
In the current work we carry the program further by addressing Open Problem 2 listed in \cite{diss} concerning the global asymptotic behavior of the 2+1 self-gravitating wave maps. 
In the general context of the initial value problem of general relativity, the question of global asymptotic behaviour is a subtle yet important question. Indeed, a comprehensive understanding of the asymptotic behaviour even in our special case shall be useful in understanding the asymptotics of more general Einstein's equations.

In precise terms, in the current work we prove that globally regular solutions of two subclasses of the system  \eqref{eewm} exhibit scattering as $T \to \infty$. These two subclasses are classified as Problem I and Problem II below. 

\section*{Problem I}

Consider a function $v$ such that
\begin{equation}\label{Problem1}
\left. \begin{array}{rcl}
\leftexp{4+1}{\square}\, v &=& F(v)\,\,\,\,\,\,\,\,\,\,\,\,\,\,\, \,\,\,\,\,\,\,\,\,\,\,\,\,\,\,\,\,\,\textnormal{on}\,\, \mathbb{R}^{4+1}\\
 v_0 = v (0, x) & \textnormal{and}& v_1 = \ptl_T v (0, x) \,\,\,\,\,\,\,\, \textnormal{on}\,\, \mathbb{R}^4\\\end{array} 
\right\}
\end{equation}
with 
\[F(v) = \left(e^{2Z} - 1 + \left(\frac{r}{R}\partial_{\eta} r + \frac{1}{2}\right) - \left(\frac{r}{R}\partial_{\xi} r - \frac{1}{2}\right)\right) \frac{v}{r^{2}} + e^{2Z} \frac{R^{2}}{r^{2}} v^{3} \zeta(R v) \]
and coupled to the equations \eqref{eq:ee} with $u = Rv.$
It may be noted that the wave equation \eqref{Problem1} is a partially linearized version\footnote{about the trivial solution $Z \equiv 0,\, r \equiv R,\, V \equiv 0$} of the fully nonlinear wave maps equation \eqref{waveV} where the linearization is applied only to the higher order terms. A special case of \eqref{Problem1} is the equation
\begin{align}
\leftexp{4+1}{\square}\, v = (e^{2Z} -1) \frac{v}{r^2} + e^{2Z} v^{3} \zeta(Rv)
\end{align} 
which corresponds to the linearization of the equation \eqref{eq:waver} (implies $r\equiv R$ with the boundary conditions on the axis $\Gamma$). Let 
\begin{align}
E(v)= \Vert v_0 \Vert_{H^1(\mathbb{R}^4)} + \Vert v_1 \Vert_{L^2(\mathbb{R}^4)},
\end{align}
we prove scattering for $v$ as follows

\begin{theorem}\label{scatteringv}
Suppose $E(v)< \eps^2$ for $\eps$ sufficiently small, then any globally regular solution $v$ of \eqref{Problem1} with 
\begin{equation}\label{conditions}
\Big|e^{2Z} - 1 \Big|, \hspace{5mm} \Big|\frac{R}{r} - 1\Big|, \hspace{5mm} \Big|R v(T, R)\Big| \leq E (v)
\end{equation}
scatters forward in time i.e., converges to a solution of its linearized equation
\begin{align}
\leftexp{4+1}{\square}\, v_{\infty} =0 
\end{align}
in the energy topology as $T \to \infty.$ 
\end{theorem}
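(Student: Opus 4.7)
My plan is to run the standard small-data scattering scheme for the energy-critical semilinear wave equation on $\mathbb{R}^{4+1}$, treating $F(v)$ as a perturbation of the pure cubic. Since $v^3$ matches the $\dot H^1$-critical exponent for $\leftexp{4+1}{\square}$, I would take as controlling norm the critical Strichartz space $L^3_t L^6_x$ on $[0,T]\times \mathbb{R}^4$, for which the wave Strichartz inequality yields
\[
\|v\|_{L^3_t L^6_x} + \|(v, v_t)\|_{L^\infty_t(\dot H^1 \times L^2)} \lesssim \|(v_0, v_1)\|_{\dot H^1 \times L^2} + \|F(v)\|_{L^1_t L^2_x},
\]
and $\|v^3\|_{L^1_t L^2_x} = \|v\|_{L^3_t L^6_x}^3$ by H\"older.

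I would split $F = H + G$ into a linear-in-$v$ piece
\[
H(v) = \frac{A(T,R)}{r^2}\, v, \qquad A := (e^{2Z}-1) + \Bigl(\frac{r}{R}\partial_\eta r + \frac{1}{2}\Bigr) - \Bigl(\frac{r}{R}\partial_\xi r - \frac{1}{2}\Bigr),
\]
and a cubic piece $G(v) = e^{2Z}(R/r)^2 v^3 \zeta(Rv)$. The hypotheses \eqref{conditions} and the Einstein constraint \eqref{eq:waver} (which controls $\partial_\xi r, \partial_\eta r$ in terms of $v$ and the metric) give $|A| \lesssim E(v) \lesssim \eps^2$ and $|e^{2Z}(R/r)^2 \zeta(Rv)| \lesssim 1$, so $\|G(v)\|_{L^1_t L^2_x} \lesssim \|v\|_{L^3_t L^6_x}^3$ follows immediately.

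The delicate piece is $H(v)$: since $r \sim R$, this is a time-dependent inverse-square potential term on $\mathbb{R}^4$, exactly at the Hardy scale $\|v/R\|_{L^2(\mathbb{R}^4)} \lesssim \|\nabla v\|_{L^2(\mathbb{R}^4)}$. I would regard $\leftexp{4+1}{\square} - A/r^2$ as the effective linear operator and invoke perturbed Strichartz estimates for a wave equation with small inverse-square potential, using $|A|\lesssim \eps^2$. Equivalently, one runs a modified energy estimate with
\[
\tilde E(t) := \int_{\mathbb{R}^4} \Bigl( v_t^2 + |\nabla v|^2 - \frac{A}{r^2}\, v^2 \Bigr)\, dx,
\]
coercive when $A$ is small by Hardy, absorbing the $\partial_T A$ commutator error into the Strichartz bootstrap via the Einstein evolution equation \eqref{eq:waveZ}.

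With both pieces controlled, the bootstrap quantity $X(T) := \|v\|_{L^3_t L^6_x([0,T]\times \mathbb R^4)}$ satisfies $X(T)\leq C\eps + C\eps^2 X(T) + C X(T)^3$, which closes for $\eps$ small and yields $X(T)\lesssim \eps$ uniformly in $T$. Global finiteness of $X$ together with $F(v)\in L^1_tL^2_x$ then produces scattering data $(v_0^+, v_1^+)\in \dot H^1\times L^2$ by the standard Cauchy-sequence argument applied to the Duhamel integral, and $(v, v_t)(T)$ converges to the free evolution of $(v_0^+, v_1^+)$ in $\dot H^1 \times L^2$ as $T\to \infty$. The principal obstacle is the rigorous treatment of the time-dependent inverse-square potential $A/r^2$: smallness of $A$ is essential, and the time regularity of $A$ (needed for any commutator bound) must be extracted from the Einstein constraint and evolution equations, themselves coupled to $v$ and the very Strichartz norm being bootstrapped, so the whole argument is intrinsically coupled through $A$.
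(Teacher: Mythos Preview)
Your treatment of the cubic piece $G(v)$ is fine and matches the paper's Lemma~\ref{l10}. The gap is in the linear-in-$v$ potential term $H(v)=A\,v/r^{2}$, which you correctly flag as the principal obstacle but do not resolve.

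Neither of the two mechanisms you propose closes as stated. Putting $H(v)$ into $L^{1}_{t}L^{2}_{x}$ directly fails: in $\mathbb{R}^{4}$, Hardy controls $\|v/|x|\|_{L^{2}}$ by $\|\nabla v\|_{L^{2}}$, not $\|v/|x|^{2}\|_{L^{2}}$, so $\|A\,v/r^{2}\|_{L^{2}_{x}}$ is not bounded at the $\dot H^{1}$ level, and there is no time integrability in any case. The modified-energy route with $\tilde E$ gives coercivity but not the Strichartz control you need to handle the cubic; obtaining Strichartz for $\Box-A(T,R)/r^{2}$ with a merely bounded, time-dependent $A$ coupled to the solution is not a standard result, and your commutator term $\partial_{T}A$ would have to be estimated using the very norms you are trying to produce.

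The paper's resolution is structurally different and uses two ingredients you are missing. First, it exploits the Einstein constraint \eqref{eq:waver} not just for the pointwise bound $|A|\lesssim\eps^{2}$ but to \emph{rewrite} the pieces $(\partial_{\eta}r+\tfrac12)$ and $(\partial_{\xi}r-\tfrac12)$ as integrals of $v^{2}$ along null rays (see (11.3)--(11.4)); this converts those parts of $H(v)$ into genuinely cubic expressions which \emph{can} be placed in $L^{1}_{T}L^{2}_{x}$, but only with the help of an additional radial Strichartz norm $\||x|^{1/4}v\|_{L^{2}_{T}L^{16}_{x}}$ (Theorem~\ref{t4}) beyond the standard pairs. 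Second, for the residual piece coming from $(e^{2Z}-1)$---rewritten via the mass-aspect function $m=1+4e^{-2Z}\partial_{\xi}r\,\partial_{\eta}r$---the paper does \emph{not} use $L^{1}_{T}L^{2}_{x}$ at all. Instead it builds a dual Morawetz norm $Y$ (the $\sum_{j}2^{j/2}\|\cdot\|_{L^{2}_{T,x}(|x|\sim 2^{j})}$ piece of Definition~\ref{d6}) and matching local-energy-decay components in $X$, and proves via a Littlewood--Paley decomposition that $\|m\,v/R^{2}\|_{Y}\leq c(E(v))\|v\|_{X}$; this is exactly the rigorous substitute for your hoped-for ``perturbed Strichartz with small inverse-square potential,'' and it occupies the bulk of the proof of Theorem~\ref{t9}. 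Your $L^{3}_{t}L^{6}_{x}$ framework contains none of these Morawetz or radial-endpoint pieces, so the bootstrap cannot close as written.
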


Equivalently, scattering backwards in time can be proven using time reversal. It should be noted that the assumptions \eqref{conditions} are consistent with the results proven for the fully nonlinear system \eqref{eewm} in \cite{diss,AGS}. The proof of Theorem \ref{scatteringv} is based on an argument that the linear part of the equation \eqref{Problem1} dominates the nonlinear part in the large.  This argument in turn is based on the construction of function spaces X and Y (to be formally defined later), such that X contains a solution to the free wave equation, if $v$ lies in X then the nonlinearity lies in Y, and finally that if the nonlinearity lies in Y , then $v$ lies in X. The result then follows by the contraction mapping principle.
Our function space X exploits the endpoint Strichartz estimates, Morawetz estimates, and radial symmetry of the problem. We are able to show that if $v$ lies in this space, then the nonlinearity can be split into a term lying in $\Vert \cdot \Vert_{L^1_t L^2_x}$ and a term lying in a space that is dual to the Morawetz estimates. This implies that if the nonlinearity lies in Y, then $v$ lies in X. The details are schematically illustrated below.

\begin{thm}\label{t8_init}
If $v$ is a radial solution to the equation

\begin{equation}
\left. \begin{array}{rcl}
\leftexp{4+1}{\square}\, v &=& F(v)\,\,\,\,\,\,\,\,\,\,\,\,\,\,\, \,\,\,\,\,\,\,\,\,\,\,\,\,\,\,\,\,\,\textnormal{on}\,\, \mathbb{R}^{4+1}\\
 v_0 = v (0, x) & \textnormal{and}& v_1 = \ptl_T v (0, x) \,\,\,\,\,\,\,\, \textnormal{on}\,\, \mathbb{R}^4\\\end{array} 
\right\}
\end{equation}

\noindent then

\begin{equation}
\| v \|_{X} \leq \| v_{0} \|_{\dot{H}^{1}(\mathbb{R}^{4})} + \| v_{1} \|_{L^{2}(\mathbb{R}^{4})} + \| F \|_{Y}.
\end{equation}
\end{thm}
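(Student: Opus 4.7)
The plan is to treat Theorem \ref{t8_init} as a purely linear statement for the inhomogeneous wave equation on $\mathbb{R}^{4+1}$, in which $F$ is regarded as a prescribed forcing term rather than as a function of $v$. First I would express $v$ through Duhamel's formula as
\[
v(T,\cdot) = \cos(T\sqrt{-\Delta})\, v_0 + \frac{\sin(T\sqrt{-\Delta})}{\sqrt{-\Delta}}\, v_1 - \int_0^T \frac{\sin((T-s)\sqrt{-\Delta})}{\sqrt{-\Delta}} F(s,\cdot)\, ds,
\]
split the result as $v = v_{\text{lin}} + v_{\text{inh}}$, and reduce the theorem by linearity to the pair of bounds $\|v_{\text{lin}}\|_X \lesssim \|v_0\|_{\dot H^1} + \|v_1\|_{L^2}$ and $\|v_{\text{inh}}\|_X \lesssim \|F\|_Y$.

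For the homogeneous estimate, I would read off from the paragraph preceding the theorem that the $X$-norm is to be built from three ingredients: the energy component $L^\infty_T \dot H^1_x$ (together with its time derivative counterpart), the endpoint Strichartz norms for the wave equation on $\mathbb{R}^{4+1}$ in the sense of Keel--Tao, and a radial Morawetz-type weighted norm such as $\||x|^{-1/2} v\|_{L^2_{T,x}}$. Conservation of energy, the Keel--Tao endpoint estimate, and the classical radial Morawetz estimate in dimension four each control the corresponding component of $v_{\text{lin}}$ by $\|v_0\|_{\dot H^1(\mathbb{R}^4)} + \|v_1\|_{L^2(\mathbb{R}^4)}$, and together they yield the required bound on $\|v_{\text{lin}}\|_X$.

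For the inhomogeneous estimate, I would transfer the homogeneous bounds to the retarded Duhamel operator by the standard $TT^*$ plus Christ--Kiselev procedure at non-endpoint exponents, and by Minkowski's inequality paired against the energy slot at the endpoint. The paper's indication that $F$ decomposes into a piece in $L^1_T L^2_x$ and a piece dual to the Morawetz norm specifies the design of $Y$: it is the sum of $L^1_T L^2_x$ (dual to the energy component of $X$) and a weighted $L^2_{T,x}$ norm dual to the radial Morawetz component of $X$. With these pairings in place, each slot of $\|F\|_Y$ feeds into the corresponding slot of $\|v_{\text{inh}}\|_X$, and the Duhamel integral is bounded as claimed.

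The main obstacle I anticipate is arranging both estimates to be available simultaneously at the endpoint. In four spatial dimensions the radial Morawetz inequality and its dual must hold without derivative loss, and the radial endpoint Strichartz pairs must be compatible with the weighted $L^2$ spaces appearing in $Y$; any mismatch would force a logarithmic or derivative concession that would then have to be repaid in the later contraction argument. Because only radial $v$ are considered, the relevant endpoint estimates are classical and the linear bound closes. The genuinely nonlinear analysis, namely controlling $\|F(v)\|_Y$ by a power of $\|v\|_X$ under the a priori smallness assumption \eqref{conditions}, is a separate matter deferred to subsequent statements in the paper and is not needed for this theorem.
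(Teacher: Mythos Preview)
Your overall framing---treat the problem as linear, use Duhamel to split $v=v_{\mathrm{lin}}+v_{\mathrm{inh}}$, handle the homogeneous piece by energy + endpoint Strichartz + Morawetz, and then pass to the Duhamel term by duality---matches the paper's architecture. There is, however, a genuine gap in how you propose to treat $v_{\mathrm{inh}}$.

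The $X$ norm in this paper is a Littlewood--Paley square function built from, among other things, the \emph{endpoint} Strichartz slot $\|P_N v\|_{L^2_T L^8_x}$ and a radial weighted variant $\||x|^{1/4}P_N v\|_{L^2_T L^{16}_x}$. The second summand of $Y$ is a dyadic weighted $L^2_{T,x}$ norm, so both the solution norm and the forcing norm carry the time exponent $2$. That is exactly the configuration in which the Christ--Kiselev lemma fails, and no $TT^*$ trick repairs it. Your plan to use ``Minkowski at the endpoint'' covers only the $F_1\in L^1_T L^2_x$ piece of $Y$; it does nothing for $F_2$ in the Morawetz-dual slot. The paper addresses this not by abstract operator arguments but by a hands-on inhomogeneous estimate (Theorem~\ref{t5}): it writes out the $4{+}1$ fundamental solution, uses finite propagation speed to localize $F_2$ to a dyadic shell $\frac{\rho}{2}\le|x|\le\rho$, and then exploits the explicit pointwise decay of the even-dimensional retarded kernel to show that away from a thin light-cone tube the contribution is integrable in $T$, while on the tube the pieces are almost orthogonal in $T$. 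This yields $\|v\|_{L^2_T L^8_x}+\||x|^{1/4}v\|_{L^2_T L^{16}_x}\lesssim \rho^{1/2}\|F_2\|_{L^2_{T,x}}$ with constants uniform in $\rho$, which is precisely what duality alone would not give you at the $L^2_T$ endpoint.

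Two smaller points. First, the Morawetz weight in four space dimensions is $|x|^{-3/2}$, not $|x|^{-1/2}$; the paper's two Morawetz lemmas give $\||x|^{-3/2}v\|_{L^2_{T,x}}$ and the local smoothing bound $\sup_\rho \rho^{-1/2}\|\partial v\|_{L^2_{T,x}(|x|\le\rho)}$, and the second piece of $Y$ is dual to the latter. Second, the $X$ norm also contains the term $\sum_N N^2\bigl(\sup_\rho \rho^{-1/2}\|P_N v\|_{L^2_{T,x}(|x|\le\rho)}\bigr)^2$, which does not follow formally from the other pieces: the paper proves it by a commutator estimate between $P_N$ and the spatial cutoff $\phi(x/\rho)$ (equations (8.4)--(8.8)). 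Your outline does not account for the Littlewood--Paley structure of $X$ at all, so this step would also be missing.
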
Theorem \ref{t8_init} uses the endpoint Strichartz estimates of Keel and Tao \cite{keel_tao} and Morawetz estimates. 
\begin{lemma} [First Morawetz Estimate]
Suppose $v$ solves the linear wave equation
\begin{equation}
\left. \begin{array}{rcl}
\leftexp{4+1}{\square}\, v &=&0\,\,\,\,\,\,\,\,\,\,\,\,\,\,\, \,\,\,\,\,\,\,\,\,\,\,\,\,\,\,\,\,\,\,\,\,\,\,\,\,\,\,\textnormal{on}\,\, \mathbb{R}^{4+1}\\
 v_0 = v (0, x) & \textnormal{and}& v_1 = \ptl_T v (0, x) \,\,\,\,\,\,\,\, \textnormal{on}\,\, \mathbb{R}^4\\\end{array} 
\right\}
\end{equation}
 then 
\begin{align}
\int _{\mathbb{R}} \int_{\mathbb{R}^4} \frac{1}{\vert x \vert^3 } v^2 dx dt 
\leq  \Vert v_0 \Vert_{\dot{H}^1 (\mathbb{R})^4 } + 
\Vert v_1 \Vert_{L^2 (\mathbb{R})^4 }
\end{align}

\end{lemma}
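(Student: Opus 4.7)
The plan is to apply the classical radial Morawetz multiplier method in spatial dimension $n = 4$. By density I may assume $v$ is Schwartz in the spatial variable. Define the Morawetz multiplier
\[
\mathcal{M} v := \partial_r v + \frac{3}{2|x|}\, v,
\]
which is the canonical choice in four space dimensions (the general formula being $\partial_r + \frac{n-1}{2r}$). Multiply the free wave equation $\square v = \partial_t^2 v - \Delta v = 0$ by $\mathcal{M} v$ and integrate over $\mathbb{R}^4$. A careful integration by parts---in which the $1/|x|$ singularity of the multiplier is handled by regularizing $|x| \leadsto \sqrt{|x|^2 + \epsilon^2}$ and letting $\epsilon \to 0$---causes the $(\partial_t v)^2/|x|$ and $(\partial_r v)^2/|x|$ contributions to cancel thanks to the choice of constant $\frac{n-1}{2}$, and leaves the differential identity
\[
\frac{d}{dt}\int_{\mathbb{R}^4} \partial_t v \cdot \mathcal{M}v\,dx \;=\; -\int_{\mathbb{R}^4}\left[\frac{|\nabla_\omega v|^2}{|x|} + \frac{(n-1)(n-3)}{4}\frac{v^2}{|x|^3}\right] dx,
\]
where $\nabla_\omega$ denotes the angular component of the gradient. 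Crucially, the coefficient $(n-1)(n-3)/4 = 3/4$ is strictly positive in $n = 4$, and this dimensional positivity is exactly what makes the estimate close with a favourable sign.

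Next, I would integrate this identity in time on $[-T, T]$ and flip the sign:
\[
\int_{-T}^{T}\!\!\int_{\mathbb{R}^4}\left[\frac{|\nabla_\omega v|^2}{|x|} + \frac{3}{4}\frac{v^2}{|x|^3}\right]dx\,dt \;=\; \left.\int_{\mathbb{R}^4}\partial_t v \,\mathcal{M}v\,dx\right|_{t=T}^{t=-T}.
\]
The right-hand side is bounded uniformly in $T$ by the conserved free-wave energy $E(v) = \|v_0\|_{\dot{H}^1(\mathbb{R}^4)}^2 + \|v_1\|_{L^2(\mathbb{R}^4)}^2$: Cauchy--Schwarz controls the $\partial_t v \cdot \partial_r v$ contribution directly, and the cross term $\partial_t v \cdot v/|x|$ is handled by Cauchy--Schwarz combined with Hardy's inequality $\|v/|x|\|_{L^2(\mathbb{R}^4)} \lesssim \|\nabla v\|_{L^2(\mathbb{R}^4)}$ (valid since $n = 4 > 2$). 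Dropping the nonnegative angular-derivative bulk term and sending $T \to \infty$ produces
\[
\int_{\mathbb{R}}\int_{\mathbb{R}^4}\frac{v^2}{|x|^3}\,dx\,dt \;\lesssim\; \|v_0\|_{\dot{H}^1(\mathbb{R}^4)}^2 + \|v_1\|_{L^2(\mathbb{R}^4)}^2,
\]
which is the claimed bound (both sides being quadratic in $v$; the natural squares on the right-hand side appear to be absent in the printed statement).

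The main delicate point is the regularization near the spatial origin, since $\mathcal{M}v$ carries a $1/|x|$ singularity that precludes naive integration by parts there. The $\epsilon$-regularization indicated above, together with dominated convergence, takes care of this: the smoothness of $v$ at $\{x=0\}$ and the volume element $r^3\,dr$ in four dimensions ensure that no distributional boundary contribution appears in the limit $\epsilon \to 0$. Apart from this bookkeeping, the argument is a textbook Morawetz computation specialized to $n=4$.
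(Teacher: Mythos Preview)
Your proposal is correct and is essentially the same argument as the paper's: the paper frames the computation in the energy--momentum tensor / vector-fields language (taking the Morawetz multiplier $\mathfrak{X}_1=\tfrac13\partial_R$ together with the lower-order current $J_1$ with $\kappa=1/|x|$), while you perform the equivalent direct multiplier calculation with $\mathcal{M}v=\partial_r v+\tfrac{3}{2|x|}v$; in both cases the undesired $\check{\mathcal L}/R$ bulk terms cancel, the surviving $v^2/|x|^3$ term has a good sign, and the boundary fluxes are controlled by the conserved energy plus Hardy's inequality. Your observation that the angular term $|\nabla_\omega v|^2/|x|$ can simply be dropped handles the non-radial case the paper only alludes to, and your remark about the missing squares on the right-hand side of the stated inequality is well taken.
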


\begin{lemma} [Second Morawetz Estimate]
Suppose $v$ solves the linear wave equation
\begin{equation}
\left. \begin{array}{rcl}
\leftexp{4+1}{\square}\, v &=&0\,\,\,\,\,\,\,\,\,\,\,\,\,\,\, \,\,\,\,\,\,\,\,\,\,\,\,\,\,\,\,\,\,\,\,\,\,\,\,\,\,\,\textnormal{on}\,\, \mathbb{R}^{4+1}\\
 v_0 = v (0, x) & \textnormal{and}& v_1 = \ptl_T v (0, x) \,\,\,\,\,\,\,\, \textnormal{on}\,\, \mathbb{R}^4\\\end{array} 
\right\}
\end{equation}
 then for a fixed $\rho>0,$ 
 
\begin{align}
&\left(\sup_{\rho} \frac{1}{\rho^{1/2}} \Big\Vert \, \nabla v \,\Big\Vert_{L_{T,x}^{2}(\mathbb{R} \times \{ |x| \leq \rho \})}\right) + \left(\sup_{\rho} \frac{1}{\rho^{1/2}} \Vert \ptl_T v \Vert_{L_{T, x}^{2}(\mathbb{R} \times \{ |x| \leq \rho \})} \right) \notag \\ 
 &\leq \| v_{0} \|_{\dot{H}^{1}(\mathbb{R}^{4})} + \| v_{1} \|_{L^{2}(\mathbb{R}^{4})}.
\end{align} 
 \end{lemma}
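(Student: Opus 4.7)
The plan is to prove this local-energy-decay estimate for the free wave equation on $\mathbb{R}^{4+1}$ via the classical Morawetz multiplier method. For each fixed $\rho > 0$, I would introduce a radial multiplier of the form $X_\rho w \fdg= f_\rho(r) \ptl_r w + g_\rho(r)\, w$, with $r = |x|$ and $f_\rho, g_\rho$ smooth radial weights depending on $\rho$. The requirements on $f_\rho$ are that it be uniformly bounded in $L^\infty$ (so that boundary terms at $T = \pm \infty$ remain dominated by the conserved energy), and that $f_\rho'(r) \gtrsim \rho^{-1}$ and $f_\rho(r)/r \gtrsim \rho^{-1}$ on the ball $\{r \leq \rho\}$. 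A convenient concrete choice is $f_\rho(r) = r/(r+\rho)$, for which $f_\rho'(r) = \rho/(r+\rho)^2$ and $f_\rho(r)/r = 1/(r+\rho)$, both of size $\sim \rho^{-1}$ when $r \lesssim \rho$. The zeroth-order piece $g_\rho$ is tuned, in terms of $f_\rho$ and the spatial dimension $n=4$, so as to cancel the unsigned cross term $\propto v\,\ptl_r v$ that otherwise appears in the bulk.

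Pairing $\leftexp{4+1}{\square}\, v = 0$ with $2 X_\rho v$ and integrating by parts over $[-T,T]\times\mathbb{R}^4$ gives an identity whose time-boundary contribution at $\pm T$ is of schematic form $\int_{\mathbb{R}^4}(f_\rho\ptl_r v + g_\rho v)\,\ptl_T v\, dx$; since $f_\rho$ is bounded and $|g_\rho| \les |f_\rho|/r$, this is controlled by the conserved energy $\|v_0\|_{\dot H^1}^2 + \|v_1\|_{L^2}^2$ via Cauchy--Schwarz together with Hardy's inequality on $\mathbb{R}^4$. The integration by parts in the spatial variables produces a bulk integrand of the form
\begin{align*}
f_\rho'(r)\bigl(|\ptl_r v|^2 + |\ptl_T v|^2\bigr) + \frac{f_\rho(r)}{r}\,|\nabla_\omega v|^2 - \tfrac{1}{2}\,\Delta g_\rho(r)\, v^2,
\end{align*}
where $\nabla_\omega$ is the angular gradient on $\mathbb{S}^3$, the correction $g_\rho$ being selected precisely so that the $v\,\ptl_r v$ cross terms cancel. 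For the explicit choice above, both $f_\rho'$ and $f_\rho/r$ are bounded below by a constant times $\rho^{-1}$ on $\{r\leq \rho\}$, and $g_\rho$ can be arranged so that $-\Delta g_\rho \geq 0$ pointwise, rendering the full bulk integrand nonnegative and bounded below by $\rho^{-1}\mathbf{1}_{\{r \le \rho\}}\bigl(|\nabla v|^2 + |\ptl_T v|^2\bigr)$.

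Letting $T \to \infty$ (justified by density of Schwartz data followed by monotone convergence of the nonnegative bulk) then yields
\begin{align*}
\rho^{-1}\int_{\mathbb{R}}\int_{|x|\leq \rho}\bigl(|\nabla v|^2 + |\ptl_T v|^2\bigr)\, dx\, dT \les \|v_0\|_{\dot H^1(\mathbb{R}^4)}^2 + \|v_1\|_{L^2(\mathbb{R}^4)}^2,
\end{align*}
so taking square roots and then the supremum over $\rho > 0$ produces the claim. The main technical obstacle will be verifying the pointwise positivity of the bulk quadratic form: for the concrete choice $f_\rho(r) = r/(r+\rho)$ one must check by direct ODE computation that the induced potential satisfies $-\Delta g_\rho \geq 0$ on $\mathbb{R}^4$. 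Should this inequality fail for this particular weight, the more flexible ansatz $f_\rho(r) = \int_0^r \psi(s/\rho)\, ds$ for a nonnegative bump $\psi \in C_c^\infty([0,2])$ produces a bulk that is nonnegative automatically. The apparent $1/r$ singularity on the axis in $n=4$ causes no difficulty thanks to Hardy's inequality, and the entire argument fits squarely within the Keel--Smith--Sogge local-smoothing framework that is by now standard for free wave equations in higher dimensions.
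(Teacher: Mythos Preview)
Your proposal is correct and follows the standard Keel--Smith--Sogge local-energy-decay argument, but the paper takes a somewhat different route. The paper chooses the multiplier $\mathfrak{F}(R)\partial_R$ with $\mathfrak{F}(R)=\tfrac13 R\,\chi(R/\rho)$, where $\chi$ is a smooth radial function equal to $1$ on $\{|x|\le 1\}$ and equal to $1/|x|$ on $\{|x|\ge 2\}$, together with the lower-order correction $\kappa_2=\chi(x/\rho)$. After summing the two currents the bulk reads $\psi(x/\rho)\,\check{e}-\tfrac12\Delta\chi(x/\rho)\,v^2$, where $\psi=(R\chi)'\ge 0$. The point is that the paper does \emph{not} insist on pointwise nonnegativity of the $v^2$ coefficient: in the transition region $\rho\le|x|\le 2\rho$ the term $\Delta\chi(x/\rho)$ has no sign, so the paper bounds it by $c\rho/|x|^3$ and absorbs it using the already-established First Morawetz Estimate $\int\!\!\int v^2/|x|^3\lesssim\|v_0\|_{\dot H^1}^2+\|v_1\|_{L^2}^2$.

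So the trade-off is this: the paper's proof is short and modular because it feeds Lemma~2.1 directly into Lemma~2.2, at the cost of making the second lemma depend on the first. Your approach with $f_\rho(r)=r/(r+\rho)$ is self-contained, but you then owe the reader the explicit verification that $-\Delta g_\rho\ge 0$ in $\mathbb{R}^4$; for the record, with $g_\rho=\tfrac12\bigl(f_\rho'+3f_\rho/r\bigr)$ one computes $-\Delta g_\rho=3(r^2+4r\rho+5\rho^2)/\bigl(2r(r+\rho)^4\bigr)>0$, so your primary choice already works and the fallback is unnecessary. Either argument is fine; just be aware that in the paper's logical flow the Second Morawetz Estimate is deliberately built on top of the First.
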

 We prove the Morawetz estimates using the vector fields method:
If $\check{\mathbf{T}}$ is the energy-momentum tensor of the linear wave equation for $v \fdg \mathbb{R}^{4+1} \to \mathbb{R}$, then we construct momenta or `currents' 

\[ J_{\mathfrak{X}} = \check{\mathbf{T}} (\mathfrak{X})\]
for suitable choices of Morawetz multipliers $\mathfrak{X} = \mathfrak{F} (R) \ptl_R.$ The undesirable bulk terms in the divergence of $J_\mathfrak{X}$ are corrected using the lower-order momentum 

\[J^\nu_1 [v] = \kappa   v \grad^\nu v - \halb 
v^2 \grad ^\nu \kappa\]

\noindent for suitable choices of $\kappa.$ 

Equivalent Morawetz estimates can be established for inhomogeneous and nonlinear wave equations following a similar procedure.

\noindent Formally, the function spaces $X$ and $Y$ are defined as follows

\begin{definition}[Function spaces]
Suppose $\phi(x)$ is a smooth, compactly supported, radial, decreasing function with $\phi(x) = 1$ when $|x| \leq 1$ and $\phi(x)$ is supported on $|x| \leq 2$. Then let $P_{N}$ be the Littlewood - Paley Fourier multiplier such that if $\mathcal F$ is a Fourier transform and $f$ is an $L^{1}$ function,

\begin{equation}
\mathcal F(P_{N} f)(\xi) = [\phi(\frac{\xi}{2}) - \phi(\xi)] \hat{f}(\xi),
\end{equation}

\noindent then let

\begin{align}
\| v \|_{X}^{2} \fdg =& \sum_{N} \Big\| \,P_{N} v \,\Big\|_{L_{T}^{2} L_{x}^{8}(\mathbb{R} \times \mathbb{R}^{4})}^{2} + \sum_{N} \Big\| |x|^{1/4} P_{N} v \Big\|_{L_{t}^{2} L_{x}^{16}(\mathbb{R} \times \mathbb{R}^{4})}^{2} \notag\\
&+ \sum_{N} N^{2} \Big\| P_{N} v \Big\|_{L_{T}^{\infty} L_{x}^{2}(\mathbb{R} \times \mathbb{R}^{4})}^{2} \notag\\
&+ \sum_{N} \left(\sup_{\rho > 0} \rho^{-1/2} \Big\| P_{N} \ptl_T v \Big\|_{L_{T,x}^{2}(\mathbb{R} \times \{ x : |x| \leq \rho \})}\right)^{2}  \notag \\
&+ \sum_{N} \left(\sup_{\rho > 0} \rho^{-1/2} \Big\| P_{N} \nabla_x v \Big\|_{L_{T,x}^{2}(\mathbb{R} \times \{ x : |x| \leq \rho \})}\right)^{2} \notag\\
&+ \sum_{N} N^{2} \left(\sup_{\rho > 0} \rho^{-1/2} \Big\| P_{N} v \Big\|_{L_{T,x}^{2}(\mathbb{R} \times \{ x : |x| \leq \rho \})}\right)^{2}\notag \\
&+ \sum_{N} \Big\| |x|^{-3/2} P_{N} v \Big\|_{L_{T,x}^{2}(\mathbb{R} \times \mathbb{R}^{4})}^{2} 
+ \sum_{N} N^{-2} \Big\| P_{N} \ptl_T v \Big\|_{L_{T}^{2} L_{x}^{8}(\mathbb{R} \times \mathbb{R}^{4})}^{2}.
\end{align}

\noindent Suppose $F = F_1 + F_2$
\begin{align*}
\| F \|_{Y}^{2} \fdg = & \inf_{F_{1} + F_{2} = F} \Big\| F_{1} \Big\|_{L_{T}^{1} L_{x}^{2}(\mathbb{R} \times \mathbb{R}^{4})}^{2}  \notag \\
&+ \sum_{N} \left(\sum_{j} 2^{j/2} \| P_{N} F_{2} \|_{L_{T,x}^{2}(\mathbb{R} \times \{ 2^{j} \leq |x| \leq 2^{j + 1} \})}\right)^{2}.
\end{align*}
\end{definition}
\noindent Finally, we prove the following theorem which controls the nonlinearity. The proof uses the structure of the nonlinearity in \eqref{Problem1} in a conveniently modified form using the coupled equations
\eqref{eq:ee}.
\begin{theorem}\label{t9_init}

The nonlinear wave equation

\begin{equation}
\left. \begin{array}{rcl}
\leftexp{4+1}{\square}\, v &=& F(v)\,\,\,\,\,\,\,\,\,\,\,\,\,\,\, \,\,\,\,\,\,\,\,\,\,\,\,\,\,\,\,\,\,\textnormal{on}\,\, \mathbb{R}^{4+1}\\
 v_0 = v (0, x) & \textnormal{and}& v_1 = \ptl_T v (0, x) \,\,\,\,\,\,\,\, \textnormal{on}\,\, \mathbb{R}^4\\\end{array} 
\right\}
\end{equation}

\noindent with $F(v)$  as in \eqref{Problem1}, has a solution with $\Vert v \Vert_{L^2_TL^8_x} < \infty$ for $E(v) < \eps^2$, $\eps$ sufficiently small.  
\end{theorem}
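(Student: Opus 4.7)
The plan is to construct $v$ by a contraction mapping argument in the space $X$. Define $\Phi \fdg X \to X$ by $\Phi(v) = w$, where $w$ is the solution of the linear inhomogeneous problem $\leftexp{4+1}{\square}\, w = F(v)$ with initial data $(v_{0}, v_{1})$. Theorem \ref{t8_init} gives
\[
\| \Phi(v) \|_{X} \lesssim \| v_{0} \|_{\dot{H}^{1}(\mathbb{R}^{4})} + \| v_{1} \|_{L^{2}(\mathbb{R}^{4})} + \| F(v) \|_{Y} \lesssim \epsilon + \| F(v) \|_{Y},
\]
so everything reduces to the nonlinear estimate
\[
\| F(v) \|_{Y} \lesssim \epsilon^{2}\| v \|_{X} + \| v \|_{X}^{3}
\]
and its Lipschitz analogue for $F(v) - F(\tilde v)$; these will show that $\Phi$ is a contraction on the ball $\{ v \in X : \| v \|_{X} \leq C\epsilon \}$ once $\epsilon$ is small.

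To establish the nonlinear estimate I would split $F = F_{1} + F_{2}$ according to the two summands in the definition of $F(v)$. The cubic piece $F_{1} := e^{2Z}(R^{2}/r^{2})\, v^{3} \zeta(Rv)$ is placed in $L^{1}_{T} L^{2}_{x}$: the pointwise hypotheses \eqref{conditions} control $e^{2Z}$, $R/r$, and $\zeta(Rv)$ uniformly, so it suffices to bound $\| v^{3} \|_{L^{1}_{T} L^{2}_{x}}$. By H\"older this equals $\| v \|_{L^{3}_{T} L^{6}_{x}}^{3}$, and interpolating between the $L^{2}_{T} L^{8}_{x}$ norm (directly in $\|v\|_{X}$) and the $L^{\infty}_{T} L^{4}_{x}$ norm (obtained from the critical Sobolev embedding $\dot{H}^{1}(\mathbb{R}^{4}) \hookrightarrow L^{4}(\mathbb{R}^{4})$ applied to $\sum_{N} N^{2} \|P_{N} v\|_{L^{\infty}_{T} L^{2}_{x}}^{2}$) gives $\| v \|_{L^{3}_{T} L^{6}_{x}} \lesssim \| v \|_{X}$, and hence $\| F_{1} \|_{L^{1}_{T} L^{2}_{x}} \lesssim \| v \|_{X}^{3}$. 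This is the standard critical cubic estimate in $4+1$ dimensions, made rigorous via a Littlewood--Paley decomposition.

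The more delicate piece is the apparently linear term
\[
F_{2} := \Big((e^{2Z}-1) + \big(\tfrac{r}{R}\ptl_{\eta} r + \tfrac{1}{2}\big) - \big(\tfrac{r}{R}\ptl_{\xi} r - \tfrac{1}{2}\big)\Big)\, \frac{v}{r^{2}},
\]
which I place in the weighted-annulus Morawetz-dual component of $Y$. The hypotheses \eqref{conditions} give $|e^{2Z}-1| + |R/r - 1| \lesssim \epsilon^{2}$ pointwise; the Einstein constraints in \eqref{eewmnull} allow the parenthesized prefactor to be identified with a bilinear expression in $\ptl u = \ptl(Rv)$ of size $\lesssim \epsilon^{2}$ where additional spacetime control is needed. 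The key observation is to split the $|x|^{-2}$ weight against $v$ as $|x|^{-1/2} \cdot |x|^{-3/2}$: the $|x|^{-3/2} v$ factor is already present in the $L^{2}_{T,x}$ Morawetz component of $\|v\|_{X}$, while the $|x|^{-1/2}$ factor dualizes exactly against the $2^{j/2}$ dyadic weight in the $Y$-norm. Summing over dyadic annuli $|x| \sim 2^{j}$ and Littlewood--Paley frequencies $N$ then yields $\| F_{2} \|_{Y} \lesssim \epsilon^{2}\| v \|_{X}$.

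The main obstacle is precisely this last estimate for $F_{2}$: a naive attempt to put $F_{2}$ into $L^{1}_{T} L^{2}_{x}$ fails because the $|x|^{-2}$ singularity is too strong at the origin in four spatial dimensions, which is exactly why $Y$ is equipped with the dyadic weighted-annulus norm dual to the Morawetz piece of $X$. Once both component estimates are in place, the Lipschitz estimate follows from the same analysis applied to $F(v) - F(\tilde v)$: the $F_{1}$-difference factors as $(v - \tilde v)(v^{2} + v\tilde v + \tilde v^{2})$ and contributes $\lesssim (\|v\|_{X}^{2} + \|\tilde v\|_{X}^{2})\| v - \tilde v \|_{X}$, while the $F_{2}$-difference inherits $\epsilon^{2}$ smallness from the prefactor together with additional smallness from the differences of the $Z$- and $r$-dependent coefficients, controlled once more via \eqref{eewmnull}. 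The Banach fixed-point theorem then yields a unique $v \in X$ with $\| v \|_{X} \lesssim \epsilon$, and in particular $\| v \|_{L^{2}_{T} L^{8}_{x}} \leq \|v\|_{X} < \infty$, as claimed.
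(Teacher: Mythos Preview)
Your overall architecture (contraction in $X$, cubic piece into $L^1_T L^2_x$, ``linear'' piece into the Morawetz-dual part of $Y$) matches the paper, and your treatment of the cubic term $F_1$ is essentially the paper's Lemma~\ref{l10}. The gap is in your handling of $F_2$.

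Your ``key observation'' does not close. Writing $|x|^{-2}=|x|^{-1/2}\cdot|x|^{-3/2}$ and matching the $|x|^{-1/2}$ against the $2^{j/2}$ weight gives, for each annulus,
\[
2^{j/2}\|P_N F_2\|_{L^2_{T,x}(|x|\sim 2^j)}\lesssim \epsilon^2\,\||x|^{-3/2}P_N v\|_{L^2_{T,x}(|x|\sim 2^j)},
\]
but the $Y$-norm requires the $\ell^1$ sum over $j$ of the left side, and you only control the $\ell^2$ sum $\||x|^{-3/2}P_N v\|_{L^2_{T,x}}$ from $\|v\|_X$. No geometric decay in $j$ is produced, so the sum diverges. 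You also silently commute $P_N$ through the rough coefficient $a(T,R)/R^2$, which is not justified.

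The paper resolves both issues by a finer decomposition. First, the pieces $(\partial_\eta r+\tfrac12)\,v/R^2$ and $(\partial_\xi r-\tfrac12)\,v/R^2$ are \emph{not} placed in the Morawetz-dual norm at all: using $\partial_\xi\partial_\eta r=\tfrac{e^{2Z}}{4}\,f^2(u)/r$ from \eqref{eq:waver} one writes $\partial_\eta r+\tfrac12=\int_0^R v^2 s\,ds$ (up to harmless factors), so these terms are effectively cubic and go into $L^1_T L^2_x$ (Lemma~\ref{l11}). Second, $(e^{2Z}-1)\,v/R^2$ is rewritten via the mass-aspect function $m=1+4e^{-2Z}\partial_\xi r\,\partial_\eta r$; part reduces to the previous case, and only $m\,v/R^2$ is placed in the Morawetz-dual part of $Y$. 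That estimate requires a full paraproduct decomposition in $M,N,j$: the $\ell^1$ sum in $j$ is closed using the local-smoothing component $\sup_{\rho>0}\rho^{-1/2}M\|P_M v\|_{L^2_{T,x}(|x|\leq\rho)}$ of $\|v\|_X$ (which, being a supremum, can be pulled outside the $j$-sum, leaving a convergent geometric series), together with Bernstein to gain a factor $N^{-1}$ and the bound $|\partial_R m|\lesssim f^2(u)/r+r(\partial u)^2$ from \eqref{eewmnull}. Your sketch captures none of this structure; in particular the $|x|^{-3/2}P_N v$ Morawetz component of $X$ is not what drives the estimate.
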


\section*{Problem II}
Consider a function $\tilde{v}$ such that 
\begin{equation} \label{Wave2}
\left. \begin{array}{rcl}
\leftexp{4+1}{\square}\, \tilde{v} &=& \tilde{F}(\tilde{v})\,\,\,\,\,\,\,\,\,\,\,\,\,\,\, \,\,\,\,\,\,\,\,\,\,\,\,\,\,\,\,\,\,\textnormal{on}\,\, \mathbb{R}^{4+1}\\
 \tilde{v}_0 = \tilde{v} (0, x) & \textnormal{and}& \tilde{v}_1 = \ptl_T \tilde{v} (0, x) \,\,\,\,\,\,\,\, \textnormal{on}\,\, \mathbb{R}^4\\\end{array} 
\right\}
\end{equation}
where 
\begin{align}
\tilde{F}(\tilde{v}) =& \left( \frac{1}{r} \ptl_\eta r + \frac{1}{2R} \right)\ptl_\xi \tilde{v}  + \left( \frac{1}{r} \ptl_\xi r - \frac{1}{2R} \right)\ptl_\eta \tilde{v} \notag\\
 &\quad + \left( \left(\frac{r}{R} \ptl_\eta r + \halb \right) - \left(\frac{r}{R} \ptl_\xi r - \halb \right) \right) \frac{\tilde{v}}{r^2} \notag\\
&\quad + \frac{R^2}{r^2} \tilde{v}^3 \zeta(R \tilde{v})
\end{align}
and $\tilde{v}$ is coupled to Einstein's equations \eqref{eewm} with $u = R \tilde{v}.$ It may be noted again that the wave equation \eqref{Wave2} is the original wave maps equation \eqref{waveV} with \eqref{eq:waveZ} linearized (implies 
$Z \equiv 0$ due to the boundary conditions on the axis $\Gamma$). Define the energy,
\begin{align}
\tilde{E}(\tilde{v})= \Vert \tilde{v}_0 \Vert_{H^1(\mathbb{R}^4)} + \Vert \tilde{v}_1 \Vert_{L^2(\mathbb{R}^4)} + \halb \Vert \tilde{v}_0\Vert_{L^4(\mathbb{R}^4)},
\end{align}
We prove scattering for \eqref{Wave2} as follows
\begin{theorem}\label{scatteringv2}
Suppose $\tilde{E}(\tilde{v})< \eps^2$ for $\eps$ sufficiently small, then any globally regular solution $\tilde{v}$ of \eqref{Problem1} with 
\begin{equation}\label{conditions2}
\Big|\frac{R}{r} - 1\Big|, \hspace{5mm}  \Big|R \tilde{v}(T, R)\Big| \leq \tilde{E} (\tilde{v})
\end{equation}
scatters forward in time i.e., converges to a solution of its linearized equation
\begin{align}
\leftexp{4+1}{\square}\, \tilde{v}_{\infty} =0 
\end{align}
in the energy topology as $T \to \infty.$ 
\end{theorem}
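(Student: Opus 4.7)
The plan is to adapt the contraction scheme used for Problem I: work inside the same function space $X$ (enlarged, if necessary, to accommodate derivatives falling on $\tilde{v}$) and $Y$, prove a linear inhomogeneous estimate $\|\tilde{v}\|_X \lesssim \tilde{E}(\tilde{v}) + \|\tilde{F}\|_Y$ via the analog of Theorem \ref{t8_init}, and close the loop by bounding $\|\tilde{F}(\tilde{v})\|_Y$ by a positive power of $\|\tilde{v}\|_X$ times a small parameter. Global regularity on $\mathbb{R}^{4+1}$ is already furnished by Theorem \ref{thm:main-first}, so only the global-in-$T$ bound for $\|\tilde{v}\|_X$ and the scattering assertion itself remain at stake.

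The nonlinearity $\tilde{F}$ splits into three qualitatively different pieces. The cubic term $\frac{R^2}{r^2}\tilde{v}^3\zeta(R\tilde{v})$ is handled exactly as in Problem I, using the pointwise bound $|R\tilde{v}|\le\tilde{E}(\tilde{v})$, smoothness of $\zeta$, and a trilinear Strichartz--Littlewood--Paley estimate. The linear-in-$\tilde{v}$ term with coefficient $\left(\frac{r}{R}\ptl_\eta r + \halb\right) - \left(\frac{r}{R}\ptl_\xi r - \halb\right)$ is shown, by integrating the reduced equation \eqref{eq:waver} (which under $Z\equiv 0$ reads $\ptl^2_{\xi\eta}r = f^2(u)/(4r)$) with the boundary conditions on the axis $\Gamma$, to be at least quadratic in $u = R\tilde{v}$; its contribution to $\|\tilde{F}\|_Y$ is absorbed by the $\||x|^{-3/2}P_N\tilde{v}\|_{L^2_{T,x}}^2$ component of $\|\tilde{v}\|_X$, with the hypothesis $|r/R-1|\le\tilde{E}$ supplying the small factor.

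The main obstacle is the first-order term
\begin{equation*}
\Big(\frac{1}{r}\ptl_\eta r + \frac{1}{2R}\Big)\ptl_\xi \tilde{v} + \Big(\frac{1}{r}\ptl_\xi r - \frac{1}{2R}\Big)\ptl_\eta \tilde{v},
\end{equation*}
which has no analog in Problem I. Writing
\begin{equation*}
\frac{1}{r}\ptl_\eta r + \frac{1}{2R} = \frac{1}{r}\ptl_\eta(r-R) + \frac{r-R}{2rR}
\end{equation*}
and invoking \eqref{eq:waver} recursively shows both coefficients are of size $\tilde{E}^2$, but they now multiply a derivative of $\tilde{v}$. To place this product in $Y$, I would perform a paraproduct decomposition in which high frequencies of $\ptl \tilde{v}$ are paired only with low (smooth) frequencies of the coefficients, so that each dyadic block $P_N \ptl \tilde{v}$ is estimated by the local Morawetz components $\sup_\rho \rho^{-1/2}\|P_N \ptl_T \tilde{v}\|_{L^2_{T,x}(|x|\le\rho)}$ and $\sup_\rho \rho^{-1/2}\|P_N \nabla_x \tilde{v}\|_{L^2_{T,x}(|x|\le\rho)}$ already built into $\|\tilde{v}\|_X$. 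The extra $\halb\|\tilde{v}_0\|_{L^4(\mathbb{R}^4)}$ term in $\tilde{E}$, included explicitly beyond the critical embedding $\dot{H}^1(\mathbb{R}^4)\hookrightarrow L^4(\mathbb{R}^4)$ so that the $L^4$ norm can be propagated by the inhomogeneous flow, supplies the endpoint integrability that prevents a derivative loss in this paraproduct estimate.

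Once the nonlinear bound $\|\tilde{F}(\tilde{v})\|_Y \lesssim (\tilde{E} + \|\tilde{v}\|_X^2)\|\tilde{v}\|_X$ has been established, the inequality $\|\tilde{v}\|_X \le C\tilde{E} + C(\tilde{E} + \|\tilde{v}\|_X^2)\|\tilde{v}\|_X$ closes by continuity for $\tilde{E} < \varepsilon^2$ with $\varepsilon$ sufficiently small. Scattering in the energy topology then follows because the finiteness of $\|\tilde{F}\|_Y$ forces the Duhamel integral $\int_0^T \frac{\sin((T-s)\sqrt{-\Delta})}{\sqrt{-\Delta}}\tilde{F}(\tilde{v})(s)\,ds$ to be Cauchy in $(\dot{H}^1\times L^2)(\mathbb{R}^4)$ as $T\to\infty$, whose limit, added to the free data, is the scattering datum; backwards-in-time scattering follows by time reversal exactly as for Theorem \ref{scatteringv}.
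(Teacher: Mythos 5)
Your proposal does not follow the paper's route, and the route it does follow has a genuine gap. The paper does \emph{not} run the Problem~I contraction scheme in $X$ and $Y$ for Problem~II. Instead it first proves a \emph{nonlinear} Morawetz estimate $\int_{\mathbb{R}^{4+1}}\tilde{v}^{2}|x|^{-3}\,\bar{\mu}_{\check{g}}\leq\tilde{E}(\tilde{v})$ directly for solutions of \eqref{Wave2}, by differentiating the multiplier quantity $M(T)=\int\tilde{v}_{R}\tilde{v}_{T}R^{3}\,dR+\tfrac{3}{2}\int\tilde{v}_{T}\tilde{v}R^{2}\,dR$ and absorbing every contribution of $\tilde{F}$ using $|R/r-1|,\,|R\tilde{v}|\leq\tilde{E}(\tilde{v})$ together with null-flux identities such as $(\ref{1.11})$. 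Scattering is then obtained by showing that the Duhamel piece $\bar{v}$ launched with zero data at time $T_{0}$ satisfies $\sup_{T\geq T_{0}}E(\bar{v}(T))\to 0$ as $T_{0}\to\infty$: each term of $\langle\bar{v}_{T},\tilde{F}(\tilde{v})\rangle$ is killed by the vanishing tail of $\int\!\!\int\tilde{v}^{2}\,dR\,dT$ and dominated convergence. The $\tfrac{1}{2}\|\tilde{v}_{0}\|_{L^{4}}$ term in $\tilde{E}$ appears because the cubic nonlinearity produces the exact derivative $\tfrac{d}{dT}\big(\tfrac{c_{1}}{4}\int\tfrac{R^{2}}{r^{2}}\bar{v}^{4}R^{3}\,dR\big)$ in the energy identity, not for any paraproduct or endpoint-integrability reason.

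The concrete gap in your argument is the step that places the first-order term $\big(\tfrac{1}{r}\partial_{\eta}r+\tfrac{1}{2R}\big)\partial_{\xi}\tilde{v}+\big(\tfrac{1}{r}\partial_{\xi}r-\tfrac{1}{2R}\big)\partial_{\eta}\tilde{v}$ into $Y$. The pointwise information available from $|r/R-1|\leq\tilde{E}$ and one integration of \eqref{eq:waver} is only $|a(T,R)|\lesssim\tilde{E}/R$ for the coefficient $a$. The second component of the $Y$-norm then gives, on the dyadic shell $2^{j}\leq|x|\leq 2^{j+1}$, a contribution $2^{j/2}\cdot\tilde{E}\,2^{-j}\,\|\partial\tilde{v}\|_{L^{2}_{T,x}(\mathrm{shell}\ j)}\leq\tilde{E}\cdot\sup_{\rho}\rho^{-1/2}\|\partial\tilde{v}\|_{L^{2}_{T,x}(|x|\leq\rho)}$, which is bounded uniformly in $j$ but carries no decay in $j$, so the sum over $j\in\mathbb{Z}$ diverges. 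In Problem~I the analogous zeroth-order term $m\,v/R^{2}$ survives this summation only because the coefficient has genuine $R^{-2}$ decay and because $\partial_{R}m$ is controlled through Einstein's equations (compare $(\ref{12.13})$--$(\ref{12.15})$ and $(\ref{12.8})$); a paraproduct decomposition pairing high frequencies of $\partial\tilde{v}$ with low frequencies of $a$ does not manufacture the missing summable factor, and the extra $L^{4}$ piece of $\tilde{E}$ cannot supply it. What actually controls this term is the integrability of the coefficients along the transverse null direction, $\int\sup_{\xi}|\partial_{\eta}r-\tfrac{1}{2}|\,d\eta<\infty$, which is a consequence of the nonlinear Morawetz estimate and is exploited in the paper through flux identities rather than through any $Y$-norm bound. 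Without either that estimate or a substitute for it, your contraction inequality $\|\tilde{F}(\tilde{v})\|_{Y}\lesssim(\tilde{E}+\|\tilde{v}\|_{X}^{2})\|\tilde{v}\|_{X}$ is not established.
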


\noindent The proof is based on the following (nonlinear) Morawetz estimate for small data
\begin{lemma}
For any globally regular solution $\tilde{v}$ of \eqref{Wave2} 
\begin{align}
\int_{\mathbb{R}^{4+1}}  \frac{\tilde{v}^2}{\vert x \vert^3} \bar{\mu}_{\check{g}} \leq \tilde{E} (\tilde{v}).
\end{align}
\end{lemma}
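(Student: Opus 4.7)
The strategy is to adapt the vector-fields/Morawetz construction that was sketched in the excerpt for the linear equation, exploiting the smallness hypothesis \eqref{conditions2} to absorb the nonlinear perturbations into the positive bulk term. Concretely, I test the equation $\leftexp{4+1}{\square}\,\tilde v = \tilde F(\tilde v)$ against the multiplier $\mathfrak X = \ptl_R$ combined with the lower-order correction $\kappa = \tfrac{3}{2R}$, i.e., against $\ptl_R \tilde v + \tfrac{3}{2R}\tilde v$, and integrate over $[0,T]\times\mathbb{R}^4$. For the linear part this is exactly the calculation recalled in the excerpt; in spatial dimension $n=4$ the Morawetz coefficient $\tfrac{(n-1)(n-3)}{2} = \tfrac{3}{2}$ is strictly positive, so after discarding the nonnegative angular piece $\int\int |x|^{-1}|\nabla_\omega \tilde v|^2$ and controlling the boundary terms by $\tilde E(\tilde v)$, I am left with the positive bulk $\tfrac{3}{2}\int_{\mathbb{R}^{4+1}}|x|^{-3}\tilde v^2\,\bar\mu_{\check g}$ on the one side, and the source integral $\int\int \tilde F(\tilde v)\,\bigl(\ptl_R\tilde v + \tfrac{3}{2R}\tilde v\bigr)\,\bar\mu_{\check g}$ on the other.

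Next I estimate each piece of $\tilde F$ using \eqref{conditions2} and the axis normalizations $\ptl_R r|_\Gamma = 1$, $r_1|_\Gamma = 0$. The zeroth-order coefficient is rewritten as $\tfrac{r}{R}\ptl_\eta r + \tfrac12 = \tfrac12\bigl(\tfrac{r}{R}(\ptl_T r - \ptl_R r) + 1\bigr)$, and coupling to the constraint \eqref{eq:waver} together with $|R/r - 1|\le \tilde E$ shows that this bracket, and its $\xi$-analogue, are $O(\tilde E)$. Pairing by Cauchy--Schwarz against the bulk $|x|^{-3}\tilde v^2$ and against the energy $\|\nabla \tilde v\|_{L^2}^2 \lesssim \tilde E$ yields a contribution of size $\tilde E \cdot (\text{bulk}) + \tilde E^2$, absorbable once $\eps$ is small. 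The cubic term $\tfrac{R^2}{r^2}\tilde v^3 \zeta(R\tilde v)$ is the most innocuous: the factor $(R\tilde v)^2\le \tilde E(\tilde v)^2$ converts it to a term pointwise controlled by $\tilde E^2 \cdot \tilde v/|x|^2$, which pairs with $\ptl_R\tilde v + \tfrac{3}{2R}\tilde v$ to give at most $\tilde E^2$ times the Morawetz bulk plus the conserved energy.

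The delicate part, and the main obstacle, is the first-order piece $\bigl(\tfrac1r \ptl_\eta r + \tfrac{1}{2R}\bigr)\ptl_\xi \tilde v + \bigl(\tfrac1r \ptl_\xi r - \tfrac{1}{2R}\bigr)\ptl_\eta \tilde v$: a naive bound by $\|\nabla\tilde v\|_{L^2}^2$ does not feed back into the Morawetz bulk $\int\int |x|^{-3}\tilde v^2$. To handle it I rewrite the coefficients, using once more the axis condition and \eqref{eq:waver}, in the form $\tfrac{1}{R}\cdot O(\tilde E)$, and then apply Cauchy--Schwarz in the weighted pairing
\[
\Bigl|\!\int\!\!\int\!\tfrac{O(\tilde E)}{R}\,\ptl\tilde v\cdot\bigl(\ptl_R\tilde v+\tfrac{3}{2R}\tilde v\bigr)\bigr|\lesssim \tilde E\,\|\nabla\tilde v\|_{L^\infty_TL^2_x}^2 + \tilde E\!\int\!\!\int\!|x|^{-3}\tilde v^2,
\]
where the first factor is $\lesssim \tilde E$ by the energy identity and the Hardy inequality on $\mathbb{R}^4$. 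Collecting everything and choosing $\eps$ small enough so that the $\tilde E$-prefactors are strictly less than the Morawetz constant $3/2$, I absorb all nonlinear contributions into the left-hand side and obtain
\[
\int_{\mathbb{R}^{4+1}}\frac{\tilde v^2}{|x|^3}\,\bar\mu_{\check g}\,\lesssim\,\tilde E(\tilde v),
\]
which is the claimed nonlinear Morawetz estimate.
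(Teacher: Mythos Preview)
Your multiplier choice and the treatment of the cubic and zeroth-order pieces are fine, but the displayed estimate for the first-order terms is a genuine gap. After pairing $\tfrac{O(\tilde E)}{R}\,\partial\tilde v$ with $\partial_R\tilde v$ you obtain
\[
O(\tilde E)\int\!\!\int \frac{|\partial\tilde v|^2}{|x|}\,dx\,dT,
\]
a spacetime quantity that cannot be bounded by the fixed-time energy $\|\nabla\tilde v\|_{L^\infty_T L^2_x}^2$; energy conservation only controls the spatial integral at each time, not its time integral. Nor can it be absorbed into the Morawetz bulk $\int\!\!\int |x|^{-3}\tilde v^2$, since there is no Hardy-type inequality going in that direction. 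The same issue arises when the zeroth-order term $(\cdots)\tfrac{\tilde v}{r^2}$ is paired with $\partial_R\tilde v$: your Cauchy--Schwarz again produces $\int\!\!\int R^2|\partial_R\tilde v|^2\,dR\,dT$. So the absorption argument as written does not close.

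The paper closes this loop differently. Instead of estimating $\partial_\eta r + \tfrac12$ merely by a pointwise $O(\tilde E)$, it uses the coupled Einstein equation \eqref{eq:waver} (with $Z\equiv 0$) to write
\[
\partial_\eta r + \tfrac12 \;=\; \int_0^R \partial_\xi\partial_\eta r\;=\;\int_0^R \frac{f^2(u)}{4r}\;\lesssim\;\int_0^R s\,\tilde v^2\,ds
\]
along the $\xi$-ray from the axis (and similarly for $\partial_\xi r - \tfrac12$ along the $\eta$-ray). Passing to null coordinates and inserting this integral representation, the dangerous pairing with $\partial_R\tilde v$ becomes
\[
\int_\eta\Bigl(\sup_{R>0}\tfrac{1}{R}\!\int_0^R f^2(\tilde v)\,s\,ds\Bigr)\Bigl(\int_\xi(\partial\tilde v)^2\,d\xi\Bigr)d\eta\;\lesssim\;\tilde E(\tilde v)\int\!\!\int \tilde v^2\,dR\,dT,
\]
i.e.\ the derivative term is converted back into the Morawetz bulk itself with a small coefficient and can then be absorbed. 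The same mechanism handles the zeroth-order term paired with $\partial_R\tilde v$. In short, the missing idea is that the smallness of the coefficients must be exploited in its integrated form coming from the constraint, not as a pointwise bound, so that the first-order perturbation feeds back into $\int\!\!\int \tilde v^2\,dR\,dT$ rather than into an uncontrolled $\int\!\!\int |\partial\tilde v|^2/|x|$.
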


\noindent This estimate directly implies

\begin{equation}
\int \int |\partial^2_{\xi \eta} r| d\xi d\eta < \infty,
\end{equation}

\noindent which then implies

\begin{equation}
\int \sup_{\xi} \Big \vert\partial_{\eta} r - \frac{1}{2} \Big\vert d\eta < \infty, \hspace{5mm} \text{and} \hspace{5mm} \int \sup_{\eta} \Big\vert\partial_{\xi} r + \frac{1}{2} \Big\vert d\xi < \infty.
\end{equation}

\noindent This fact implies that the contribution of the nonlinearity $\tilde{F}(\tilde{v})$ at large times is quite small in the energy norm, implying scattering.\vspace{5mm}

We would like to remark that the wave map field $u$ is the crucial field in the system \eqref{eewm} that drives all important geometric aspects of the evolution of the system and also the corresponding 3+1 Einstein's equations. For instance, the field $u$ was the central object of study in both non-concentration and small data arguments. Furthermore, in principle the wave map field $u$ also represents the 
nonlinear asymptotic effects of the system (e.g. nonlinear memory effect). In this regard,  Theorem \ref{t9_init}  implies that the `soul' of the system is asymptotically linear in linear approximation of either $r$ or $Z$.  

Wave maps are natural geometric generalizations of harmonic maps on one hand and linear wave equations on the other, and have been popular in the analysis and PDE community due to the nice structure and the applications in several models in mathematical physics. Thus, there exist several deep and
diverse results in the literature, focusing mainly on $\mathbb{R}^{n+1}$. In the following we discuss
 a few of these results, we refer the reader to \cite{struwe_wmsurvey, shatah_struwe, Tao_book} for instance, for detailed surveys on the study of wave maps.

 Christodoulou, Tahvildar-Zadeh and Shatah published a pioneering series of works in early 90s on equivariant and spherically symmetric wave maps on $\mathbb{R}^{2+1}$ in which they proved global existence and asymptotics for these wave maps \cite{chris_tah1, chris_tah2, jal_tah, jal_tah1}. Subsequently, it was observed in \cite{G2spacetimes} that spherically symmetric wave maps $U \fdg \mathbb{R}^{2+1} \to \mathbb{H}^2$ can be correlated to $G_2-$symmetric 3+1 dimensional spacetimes, which eventually led to a proof of strong cosmic censorship for these spacetimes.  In this context, we would like to emphasize that the nonzero homotopy degree in our case prevents us from reducing our system to flat space wave maps like in \cite{G2spacetimes}. Thus, we are forced to deal with the coupling with Einstein's equations. A detailed discussion of the occurance of 2+1 wave maps  in 3+1 spacetimes in general relativity and further sub-cases can be found in \cite{diss}.

Global existence for general wave maps was studied by Tao through a series of works \cite{Tao}. Global existence for wave maps $U\fdg \mathbb{R}^{2+1} \to \mathbb{H}^2$ for small data was proved in \cite{krieg_wmcrit}. Global existence and scattering for semilinear wave equations with power nonlinearity was proved in the classic paper of Kennig and Merle \cite{kennig_merle2008}. Global existence and scattering for wave maps $\mathbb{R}^{n+1} \to M, n= 2, 3$ was proved in  \cite{tat_besovl}. Concentration compactness for these wave maps was established in \cite{krieg_schlag_ccwm}. Likewise, large data wave maps for more general targets were studied 
in \cite{sterb_tata_long, sterb_tata_main}.

\subsection*{Notation} We shall use the Einstein's summation convention throughout. Inconsequential constants in the estimates are scaled to $1$ to avoid cluttering up the notation. For a scalar function like $v$, we shall use the notation $\ptl_T v$ and $v_T$ equivalently for partial derivatives. 

\section{Scattering for Problem I}
\subsection{Morawetz Estimates}

 Firstly, let us start with the following linear wave equation

\begin{equation}\label{4+1linwave1}
\left. \begin{array}{rcl}
\leftexp{4+1}{\square}\, v &=&0\,\,\,\,\,\,\,\,\,\,\,\,\,\,\, \,\,\,\,\,\,\,\,\,\,\,\,\,\,\,\,\,\,\,\,\,\,\,\,\,\,\,\text{on}\,\, \mathbb{R}^{4+1}\\
 v_0 = v (0, x) & \text{and}& v_1 = \ptl_T v (0, x) \,\,\,\,\,\,\,\, \text{on}\,\, \mathbb{R}^4\\\end{array} 
\right\}
\end{equation}

\noindent Denote by $\check{\mathbf{T}}$ the energy momentum tensor of $v$
\begin{align}
\check{\mathbf{T}}_{\mu \nu} (v) \fdg = \grad_\mu v \grad_\nu v - \halb \check{g}_{\mu \nu} \grad^\sigma v \grad_\sigma v,
\end{align}
where $\check{g}$ is the metric on the Minkowski space $\mathbb{R}^{4+1}.$
If we define $\check{\cal{L}} \fdg = \halb \grad^\sigma v \grad_\sigma v,$
\begin{align}
\check{\mathbf{T}}_{\mu \nu} (v) = \grad_\mu v \grad_\nu v -  \check{g}_{\mu \nu} \check{\cal{L}}.
\end{align}
We shall prove the desired Morawetz estimates for \eqref{4+1linwave1} using the vector fields method. Recall that the vector fields method is based on finding suitable spacetime multiplier vectors 
$\mathfrak{X}$ such that the corresponding momentum or `current'
\[ J^\nu_{\mathfrak{X}} = \check{\mathbf{T}}^\nu_\mu \mathfrak{X}^\mu \]
has desirable properties in view of the divergence theorem. The divergence of $J_{\mathfrak{X}}$
is given by
\begin{align}
\grad_\nu J^\nu_{\mathfrak{X}} = \halb \check{\mathbf{T}}^{\mu \nu} \,\leftexp{(\mathfrak{X})} {\pi}_{\mu \nu},
\end{align}
where we have used the fact that the energy-momentum tensor is divergence free $\grad_\nu \check{\mathbf{T}}^\nu_\mu =0$ which is a consequence of the equation \eqref{4+1linwave1}. The
tensor $\leftexp{(\mathfrak{X})} {\pi}_{\mu \nu}$ is called the deformation tensor, formally defined as
\[ \leftexp{(\mathfrak{X})} {\pi}_{\mu \nu} \fdg = L_{\mathfrak{X}} \, \check{g}_{\mu \nu} \]
where $L_{\mathfrak{X}}$ is the Lie derivative in the direction of $\mathfrak{X}.$
For the sake of brevity, let us further define $\check{e} \fdg = \check{\mathbf{T}} (\ptl_T, \ptl_T)$
and $\check{m} \fdg =  \check{\mathbf{T}} (\ptl_T, \ptl_R).$

\noindent Firstly note the multiplier $\mathfrak{X} = \ptl_T$ has the current 

\begin{align}
J_{\ptl_T} = -\check{e}\, \ptl_T + \check{m}\, \ptl_R,
\end{align}
which is divergence-free in view of the fact that $\ptl_T$ is a Killing vector of $\check{g}$, so the
`deformation' is zero
\[ \leftexp{(\ptl_T)} {\pi}_{\mu \nu} =0 \]
i.e.,
\begin{align}
\grad_\nu J^\nu_T = \halb \leftexp{(\ptl_T)}\pi_{\mu \nu} \check{\mathbf{T}}^{\mu \nu}
=0.
\end{align}
If we use this fact on the domain enclosed by two Cauchy surfaces $\check{\Sigma}_\tau$ and 
$\check{\Sigma}_s$, $s>\tau$ we have from the divergence theorem
\begin{align}
0=\int \grad_\nu J^\nu_{\mathfrak{X}} = \int_{\check{\Sigma}_\tau} \ip{\ptl_T}  { J_{\ptl_T}} \bar{\mu}_{\check{q}} -\int_{\check{\Sigma}_s} \ip{\ptl_T}  { J_{\ptl_T}} \bar{\mu}_{\check{q}}.
\end{align}
Thus we have deduced the conservation law formally, if we impose $s=T$ and $\tau =0$
 
\begin{align}
\Vert v (T) \Vert_{\dot{H}^1 (\mathbb{R}^4)} + \Vert \ptl_T v (T) \Vert_{L^2 (\mathbb{R}^4)}  = \Vert v_0 \Vert_{\dot{H}^1 (\mathbb{R}^4 )} + 
\Vert v_1 \Vert_{L^2 (\mathbb{R}^4) }.
\end{align}

 Now consider a Morawetz multiplier vector $\mathfrak{X} \fdg = \mathfrak{F}(R) \ptl_R$ so that the corresponding momentum
is given by 

\begin{align}
J_\mathfrak{X} =&\, \check{\mathbf{T}}(\mathfrak{X}) \notag \\
 =&\, \mathfrak{F}(R) \big( -\check{m} \ptl_T + \check{e} \ptl_R \big)
\end{align}

\noindent and its divergence

\begin{align}\label{mora1_div}
\grad_\nu J^\nu_\mathfrak{X} = \halb \check{\mathbf{T}}^{\mu \nu }\,\leftexp{(\mathfrak{X})}{\pi_{\mu \nu}},
\end{align}
where the non-zero terms of deformation tensor $ \pi$ are given by
\begin{align*}
\leftexp{(\mathfrak{X})}{\pi_{RR}} = 2 g_{RR} \ptl_R \mathfrak{F}(R),&  \leftexp{(\mathfrak{X})}{\pi_{\theta \theta}} =  \frac{2}{R} g_{\theta \theta} \mathfrak{F} (R), \notag \\  
\leftexp{(\mathfrak{X})}{\pi_{\phi\phi} }=  \frac{2}{R} g_{\phi \phi} \mathfrak{F}(R),& \leftexp{(\mathfrak{X})}{\pi_{\psi \psi}} =  \frac{2}{R} g_{\psi \psi} \mathfrak{F}(R).
\end{align*}
Consequently a calculation shows that  \eqref{mora1_div} can be represented as
\begin{align} \label{moragen}
\grad_\nu J^\nu_\mathfrak{X} = \left(  - \frac{6\mathfrak{F}(R)}{R}\check{\cal{L}} + \check{e}\, \ptl_R \mathfrak{F} (R) \right) 
\end{align}
Now define the following lower-order momentum vector
\begin{align}
J^\nu_1 [v] \fdg = \kappa   v \grad^\nu v - \halb
v^2 \grad ^\nu \kappa .
\end{align}
Its divergence is 
\begin{align}
\grad_\nu J^\nu_1 = &\kappa v \square v + \kappa \grad^\nu v \grad_\nu v  + v \grad^\nu v \grad_\nu \kappa - (\square \kappa )\frac{v^2}{2} - v \grad^\nu \kappa  \grad_\nu v \notag \\
=&\kappa \grad^\nu v \grad_\nu v   - (\square \kappa )\frac{v^2}{2} \notag \\
=& 2 \kappa \, \check{\cal{L}} - (\square \kappa )\frac{v^2}{2}.
\end{align}
It may noted that the momentum or `current' $J_1$ has been constructed to neutralize the undesirable 
terms in the divergence formula \eqref{moragen} while the price to pay are the lower order terms
in the spacetime integrals and boundary terms which can be handled, for instance, using the Hardy's inequality. We shall precisely do this in the following.  
\begin{lemma} [First Morawetz Estimate]\label{firstmora}
Suppose $v$ solves the linear wave equation
\begin{equation}
\left. \begin{array}{rcl}
\leftexp{4+1}{\square}\, v &=&0\,\,\,\,\,\,\,\,\,\,\,\,\,\,\, \,\,\,\,\,\,\,\,\,\,\,\,\,\,\,\,\,\,\,\,\,\,\,\,\,\,\,\textnormal{on}\,\, \mathbb{R}^{4+1}\\
 v_0 = v (0, x) & \textnormal{and}& v_1 = \ptl_T v (0, x) \,\,\,\,\,\,\,\, \textnormal{on}\,\, \mathbb{R}^4\\\end{array} 
\right\}
\end{equation}
 then 
\begin{align}
\int _{\mathbb{R}} \int_{\mathbb{R}^4} \frac{1}{\vert x \vert^3 } v^2 dx dt 
\leq  \Vert v_0 \Vert_{\dot{H}^1 (\mathbb{R})^4 } + 
\Vert v_1 \Vert_{L^2 (\mathbb{R})^4 }
\end{align}

\end{lemma}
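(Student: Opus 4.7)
The plan is to apply the vector-field method using the setup already developed above: combine the Morawetz multiplier $\mathfrak{X} = \mathfrak{F}(R)\partial_R$ with the zeroth-order correction current $J_1[v]$ and choose the free parameters so that all bulk terms except a single nonnegative quantity proportional to $v^2/R^3$ cancel, then invoke the divergence theorem.

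Concretely, I take $\mathfrak{F}(R)\equiv 1$, so that \eqref{moragen} reduces to $\nabla_\nu J^\nu_{\mathfrak{X}} = -\tfrac{6}{R}\check{\mathcal L}$. To eliminate the indefinite $\check{\mathcal L}$ term, I choose $\kappa = 3/R$ in $J_1$; then $2\kappa\check{\mathcal L} - \tfrac{6}{R}\check{\mathcal L}=0$, and the divergence of $J_{\mathfrak{X}}+J_1$ collapses to $-\tfrac{1}{2}(\square\kappa)v^2$. Since $\kappa$ is time-independent, $\square\kappa = \Delta_{\mathbb{R}^4}(3/R)$, and the radial Laplacian in four spatial dimensions gives $\Delta(1/R) = \partial_R^2(1/R) + (3/R)\partial_R(1/R) = -1/R^3$ away from the origin. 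Hence, for $R>0$,
\begin{equation*}
\nabla_\nu(J_{\mathfrak{X}}+J_1)^\nu \;=\; \tfrac{3}{2}\,\frac{v^2}{R^3},
\end{equation*}
which is manifestly nonnegative and matches the integrand appearing on the left-hand side of the lemma.

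Next, I integrate this identity on the slab $[-T,T]\times\{R\geq\varepsilon\}$ and use the divergence theorem to convert the spacetime bulk integral into boundary fluxes on $\Sigma_{\pm T}$ and on the timelike cylinder $\{R=\varepsilon\}$. The fluxes through the time slices, which involve $|\partial_T v||\partial_R v|$ from $J_{\mathfrak{X}}$ and $|\kappa|\,|v||\partial_T v|$ from $J_1$, are controlled by Cauchy--Schwarz together with Hardy's inequality on $\mathbb{R}^4$, i.e. $\||x|^{-1}v\|_{L^2}\lesssim \|\nabla v\|_{L^2}$, so they are bounded by the conserved energy $\|v_0\|_{\dot{H}^1}^2+\|v_1\|_{L^2}^2$ (the conservation having just been established with the Killing multiplier $\partial_T$). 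Sending $\varepsilon\downarrow 0$ and then $T\to\infty$ yields the claimed global-in-time estimate.

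The main technical point is controlling the coordinate singularity of $\kappa=3/R$ at the axis $\{R=0\}$: a naive application of the divergence theorem on the full slab picks up a distributional contribution from $\{R=0\}$ that must be shown either to have the correct sign or to vanish. I handle this by the cutoff-and-pass-to-the-limit procedure above, noting that on $\{R=\varepsilon\}$ the boundary flux is bounded by a constant times $\varepsilon^{3}\sup_{R\le\varepsilon}(\varepsilon^{-1}|v||\nabla v|+|v||\partial_T v|)$, which vanishes as $\varepsilon\downarrow 0$ since smooth Cauchy data produce solutions uniformly bounded near the axis.
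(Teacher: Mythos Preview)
Your proof is correct and follows essentially the same approach as the paper. The only difference is an inconsequential overall scaling: the paper takes $\mathfrak{F}(R)=\tfrac{1}{3}$ and $\kappa=\tfrac{1}{|x|}$, yielding a bulk term $\tfrac{1}{2}\,v^2/R^3$, whereas you take $\mathfrak{F}(R)=1$ and $\kappa=\tfrac{3}{|x|}$, producing $\tfrac{3}{2}\,v^2/R^3$; the boundary-term estimates via Hardy's inequality and energy conservation are identical, and your explicit $\varepsilon$-cutoff at the axis is in fact more careful than the paper's formal computation.
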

\begin{proof}
We shall prove the theorem for a radial function $v$, the proof is essentially the same
in the general case. 
Consider the choice of $\kappa =  \frac{1}{\vert x \vert}$, then 
\begin{align}
\square \kappa = \ptl^2_R \kappa + \frac{3}{R} \ptl_R \kappa  = - \frac{1}{R^3}.
\end{align}
Now consider the case when $\mathfrak{F}(R) = \frac{1}{3}$, so that $\mathfrak{X}_1 = \frac{1}{3}\ptl_R$ then
\begin{align}
 J_{\mathfrak{X}_1} =& \frac{1}{3} \left( -\check{m} \ptl_T + \check{e} \ptl_R \right) \\
\intertext{and} 
 J_1 =& -\left(\frac{1}{R} v \ptl_T v \right)\,\ptl_T + \left( \frac{1}{R} v \ptl_R v  + \frac{v^2}{R^2} \right) \ptl_R .
\end{align}
The divergences of $J_{\mathfrak{X}_1}$ and $J_1$ are given by
\begin{align}
\grad_\nu J^\nu_{\mathfrak{X}_1} =&  -\frac{2}{R} \check{\cal{L}} \\
\grad_\nu J^\nu_1=& \frac{2}{R}\check{\cal{L}} + \frac{v^2}{2R^3} 
\end{align}
 Now consider the sum vector $J^\nu_S \fdg = J^\nu_\mathfrak{X}  + J^\nu_1 $, then it follows that 
 \begin{align}
 \grad_\nu J^\nu_S =& \grad_\nu J^\nu_\mathfrak{X}  + \grad_\nu J^\nu_1 \\
 =& \halb \frac{v^2}{R^3} 
 \end{align}
 Let us now apply the Stokes' theorem between $\check{\Sigma}_0$ and $\check{\Sigma}_T$ Cauchy surfaces,
 
 \begin{align}
\halb \int \frac{v^2}{R^3}\, \bar{\mu}_{\check{g}} =\int  \grad_\nu J^\nu_S \bar{\mu}_{\check{g}} = \int_{ {\check{\Sigma}}_0}  \ip{\ptl_T}{J_S}\bar{\mu}_{\check{q}} -\int_{\check{\Sigma}_T} \ip{\ptl_T}{ J_S} \bar{\mu}_{\check{q}}
 \end{align}
  
Now let us calculate the boundary terms 
\begin{align}
\check{g}_{\mu \nu} J^\mu_S (\ptl_T)^\nu = \check{g}_{\mu \nu} J^\mu_{\mathfrak{X}} (\ptl_T)^\nu + \check{g}_{\mu \nu} J^\mu_1 (\ptl_T)^\nu  
\end{align}   
Note that

\begin{align}
\check{g}_{\mu \nu} J^\mu_{\mathfrak{X}} (\ptl_T)^\nu = \check{g}_{T T} J_{\mathfrak{X}}^T (\ptl_T)^T = \mathfrak{F}(R) \check{m} 
\intertext{and}
\check{g}_{\mu \nu} J^\mu_1 (\ptl_T)^\nu = \check{g}_{T T} J_1^T (\ptl_T)^T = -\frac{1}{\vert  x \vert} v \, \ptl_T v.
\end{align}
From the dominant energy condition and Hardy's inequality it follows that 

\begin{align}
\int_{[0,T]} \int_{\mathbb{R}^4} \frac{1}{\vert x \vert^3} \vert v \vert^2 \bar{\mu}_{\check{g}} \leq \Vert v_0 \Vert_{\dot{H}^1 (\mathbb{R}^4) } + 
\Vert v_1 \Vert_{L^2 (\mathbb{R}^4)}.
\end{align}
As the right is independent of $T$, taking the limit $T \to \infty$ and time reversal, it follows that
\begin{align}
\int_{\mathbb{R}}\int_{\mathbb{R}^4} \frac{1}{\vert x \vert^3} \vert v \vert^2 \bar{\mu}_{\check{g}} \leq \Vert v_0 \Vert_{\dot{H}^1 (\mathbb{R}^4) } + 
\Vert v_1 \Vert_{L^2 (\mathbb{R}^4) }.
\end{align}
\end{proof}

\begin{lemma} [Second Morawetz Estimate]\label{t3}
Suppose $v$ solves the linear wave equation
\begin{equation}
\left. \begin{array}{rcl}
\leftexp{4+1}{\square}\, v &=&0\,\,\,\,\,\,\,\,\,\,\,\,\,\,\, \,\,\,\,\,\,\,\,\,\,\,\,\,\,\,\,\,\,\,\,\,\,\,\,\,\,\,\textnormal{on}\,\, \mathbb{R}^{4+1}\\
 v_0 = v (0, x) & \textnormal{and}& v_1 = \ptl_T v (0, x) \,\,\,\,\,\,\,\, \textnormal{on}\,\, \mathbb{R}^4\\\end{array} 
\right\}
\end{equation}
 then for a fixed $\rho>0,$ 
 
\begin{align}\label{3.2}
&\left(\sup_{\rho} \frac{1}{\rho^{1/2}} \Big\Vert \, \nabla v \,\Big\Vert_{L_{T,x}^{2}(\mathbb{R} \times \{ |x| \leq \rho \})}\right) + \left(\sup_{\rho} \frac{1}{\rho^{1/2}} \Vert \ptl_T v \Vert_{L_{T, x}^{2}(\mathbb{R} \times \{ |x| \leq \rho \})} \right) \notag \\ 
 &\leq \| v_{0} \|_{\dot{H}^{1}(\mathbb{R}^{4})} + \| v_{1} \|_{L^{2}(\mathbb{R}^{4})}.
\end{align} 
 \end{lemma}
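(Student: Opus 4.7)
My plan is to mirror the vector-fields argument of Lemma~\ref{firstmora}, but with a Morawetz multiplier $\mathfrak{X}=\mathfrak{F}(R)\ptl_R$ whose weight $\mathfrak{F}$ is tuned to the scale $\rho$, so that the boundary flux picks up a factor of $\rho$ rather than of $1$. A convenient choice is a smoothed version of $\mathfrak{F}(R)=\min(R,\rho)$, giving $0\le\mathfrak{F}(R)\le\rho$, $\ptl_R\mathfrak{F}(R)\ge c\,\chi_{[0,\rho]}(R)$, and $\mathfrak{F}(R)/R\in[0,1]$. I pair $\mathfrak{X}$ with the lower-order current $J_1[v]$ from the first Morawetz proof, taking $\kappa(R)\fdg=3\mathfrak{F}(R)/R$; the coefficient is chosen precisely so that $J_1$ cancels the indefinite $\check{\cal{L}}$-term produced by $J_\mathfrak{X}$.

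Plugging these choices into \eqref{moragen} and the identity $\grad_\nu J^\nu_1=2\kappa\check{\cal{L}}-\halb v^2\,\square\kappa$, the two $\check{\cal{L}}$-terms annihilate and the sum current $J_S\fdg=J_\mathfrak{X}+J_1$ satisfies
\begin{equation*}
\grad_\nu J^\nu_S\;=\;\check{e}\,\ptl_R\mathfrak{F}(R)\,-\,\halb v^2\,\square\kappa.
\end{equation*}
Since $\kappa=3$ on $\{|x|\le\rho\}$ and $\kappa=3\rho/R$ on $\{|x|>\rho\}$, a direct radial calculation gives $\square\kappa\equiv 0$ in the interior and $\square\kappa=-3\rho/R^3$ in the exterior, with a favorably-signed distributional contribution at $R=\rho$ once $\mathfrak{F}$ is smoothed. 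Both terms on the right-hand side are therefore nonnegative, and in particular $\grad_\nu J^\nu_S\ge\halb(|\ptl_T v|^2+|\grad v|^2)\chi_{\{|x|\le\rho\}}$, which is precisely the quantity to be controlled.

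I then apply the divergence theorem to $J_S$ on the slab $[0,T]\times\R^4$. The flux on each $\check{\Sigma}_t$ equals $\int(\mathfrak{F}\check{m}+\kappa v\,\ptl_T v)\,dx$. Cauchy-Schwarz together with $|\mathfrak{F}|\le\rho$ bounds the first piece by $\rho(\|v_0\|_{\dot H^1}^2+\|v_1\|_{L^2}^2)$. For the second piece the key observation is $\kappa(R)\le 3\min(1,\rho/R)$; splitting the integral into $\{R\le\rho\}$ and $\{R\ge\rho\}$ and applying Hardy's inequality $\int R^{-2}v^2\,dx\les\|\grad v\|_{L^2}^2$ in each region yields $\|\kappa v\|_{L^2}\le C\rho\,\|\grad v\|_{L^2}$, so this piece is also bounded by $C\rho(\|v_0\|_{\dot H^1}^2+\|v_1\|_{L^2}^2)$. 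Combining with the divergence identity, passing $T\to\infty$, and using time reversal yields
\begin{equation*}
\int_\R\int_{|x|\le\rho}\bigl(|\ptl_T v|^2+|\grad v|^2\bigr)\,dx\,dt\;\le\;C\rho\bigl(\|v_0\|_{\dot H^1}^2+\|v_1\|_{L^2}^2\bigr),
\end{equation*}
from which \eqref{3.2} follows after $\rho^{-1/2}$-scaling and taking the supremum over $\rho$.

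The main obstacle is calibrating the pair $(\mathfrak{F},\kappa)$ so that three competing constraints hold simultaneously: (i) $\ptl_R\mathfrak{F}$ dominates $\chi_{\{|x|\le\rho\}}$; (ii) the $\check{\cal{L}}$-terms in the two currents cancel exactly; (iii) the boundary flux of $J_S$ scales as $\rho$ rather than as $1$. Constraint (ii) essentially forces $\kappa=3\mathfrak{F}/R$, and only the sharp form of Hardy's inequality on the exterior region where $\kappa\sim\rho/R$ makes (iii) compatible with this choice. Checking that $-\square\kappa\ge0$ across the kink $R=\rho$, and verifying that smoothing $\mathfrak{F}$ does not destroy the sign of $\ptl_R\mathfrak{F}$, is where the computation requires the most care.
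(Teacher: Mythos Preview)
Your proof is correct and follows essentially the same vector-fields argument as the paper: both use a Morawetz multiplier $\mathfrak{X}=\mathfrak{F}(R)\partial_R$ with $\mathfrak{F}\sim\min(R,\rho)$, paired with a lower-order current with weight $\kappa\sim\min(1,\rho/R)$ so that the indefinite $\check{\mathcal L}$-terms cancel, and both bound the boundary flux by $C\rho\bigl(\|v_0\|_{\dot H^1}^2+\|v_1\|_{L^2}^2\bigr)$ via Hardy's inequality exactly as you do. The one minor difference is in handling the $v^2\,\square\kappa$ bulk term: the paper works with a smooth $\chi$ from the outset and then controls this term in absolute value by $C\rho\int|x|^{-3}v^2$, invoking the First Morawetz Estimate; you instead observe that with the sharp choice $\kappa=3\min(1,\rho/R)$ one has $-\square\kappa\ge 0$ distributionally (including the $\delta$-contribution at $R=\rho$), so the term can simply be dropped, making the argument marginally more self-contained.
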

 \noindent \emph{Proof:}
 Let $\chi \in C_0^{\infty} (\mathbb{R}^4)$ be a positive, radially symmetric function  such
 that 
 \begin{align}
 \chi (x) =1, \quad \vert x \vert \leq 1, \quad \chi (x) = \frac{1}{\vert x \vert} \quad \text{for} \quad \vert x \vert > 2, 
 \end{align}
 so that
 \begin{align} 
 \psi(R) \fdg= \frac{d}{dR} (R \cdot\chi(R)) \geq 0, \quad \forall R \in (0, \infty).
 \end{align}
 Notice that 
 $\psi(R) \geq 0$ is supported on $R\leq 2$ and $\psi(R)=1$ for $R\leq 1.$
 
\noindent Consider the Morawetz multiplier $\mathfrak{X}_2 =  \mathfrak{F}(R) \ptl_R$ for $\mathfrak{F}(R)= \frac{1}{3} R  \cdot\chi(\frac{x}{\rho})$. Then 
 \begin{align}
\grad_\nu J^\nu_{\mathfrak{X}_2} =& \left(-2 \chi \left(\frac{x}{\rho}\right) \cal{L} + \psi \left(\frac{x}{\rho}\right) \check{e}\right)
\end{align}
 
\noindent Consider the lower order momentum 
\begin{align}
J^\nu_{\kappa_2} [v]\fdg = \kappa_2   v \grad^\nu v - 
v^2 \grad ^\nu \kappa_2  
\end{align}
with $\kappa_2 = \chi(\frac{ x }{\rho}).$
Consequently, 

\begin{align}
\grad_\nu J^\nu_{\kappa_2} =&\kappa \grad^\nu v \grad_\nu v   - (\square \kappa )\frac{v^2}{2}  \\
=& 2\chi \left(\frac{x}{\rho}\right)\cal{L}  - \Delta \chi \left(\frac{x}{\rho}\right) \frac{v^2}{2}.
\end{align}
The divergence of the sum is then 
 \begin{align}
 \grad_\nu J^\nu_{S_2} =& \grad_\nu J^\nu_{\mathfrak{X}_2}  + \grad_\nu J^\nu_{\kappa_2} \\
 =& \psi\left(\frac{x}{\rho}\right) \check{e}
  - \Delta \chi \left(\frac{x}{\rho}\right) \frac{v^2}{2}
 \end{align}
 
\noindent  Let us now use the divergence theorem between two Cauchy surfaces
  \begin{align}
\int  \grad_\nu J^\nu_{S_2} \bar{\mu}_{\check{g}} = \int_{ {\check{\Sigma}}_0}  \ip{\ptl_T}{J_{S_2}}\bar{\mu}_{\check{q}} -\int_{\check{\Sigma}_T} \ip{\ptl_T}{ J_{S_2}} \bar{\mu}_{\check{q}}
 \end{align}
 Moreover,
 \begin{align}
  \int  \grad_\nu J^\nu_{S_2} \bar{\mu}_{\check{g}} \leq &
  \halb \int \psi \left(\frac{R}{\rho}\right) (\ptl_T v)^2 \bar{\mu}_{\check{g}} + \halb \int \psi \left(\frac{R}{\rho}\right) \vert\grad_x v\vert^2 \bar{\mu}_{\check{g}} \\
&\quad  + c \int_{\mathbb{R}}\int_{\vert x \vert >\rho} \frac{\rho}{\vert x \vert^3} \vert v \vert^2 \bar{\mu}_{\check{g}}.
 \end{align}
 By previous lemma it follows that
 \begin{align}
 \int_{\mathbb{R}} \int_{\vert x \vert>\rho }  \frac{\rho}{\vert x \vert^3} v^2 \bar{\mu}_{\check{g}} \leq 
 \rho \left(  \Vert v_0 \Vert_{\dot{H}^1 (\mathbb{R})^4 } + 
\Vert v_1 \Vert_{L^2 (\mathbb{R})^4 }\right)
  \end{align}
By the Hardy's theorem and the dominant energy condition, the boundary terms can
be estimated by the initial energy. Therefore,

 \begin{align}
\int_{\mathbb{R}} \int _{\vert x \vert<\rho} (\ptl_T v)^2 + \vert \grad_x  v \vert^2 \bar{\mu}_{\check{g}}
\leq \rho \left(  \Vert v_0 \Vert_{\dot{H}^1 (\mathbb{R})^4 } + 
\Vert v_1 \Vert_{L^2 (\mathbb{R})^4 }\right).
 \end{align}
The result of the theorem now follows.

\begin{theorem}
Suppose that $v$ is a solution to the inhomogeneous wave equation
\begin{equation}
\left. \begin{array}{rcl}
\leftexp{4+1}{\square}\, v &=&F\,\,\,\,\,\,\,\,\,\,\,\,\,\,\, \,\,\,\,\,\,\,\,\,\,\,\,\,\,\,\,\,\,\,\,\,\,\,\,\,\,\,\textnormal{on}\,\, \mathbb{R}^{4+1}\\
 v_0 = v (0, x) & \textnormal{and}& v_1 = \ptl_T v (0, x) \,\,\,\,\,\,\,\, \textnormal{on}\,\, \mathbb{R}^4\\\end{array} 
\right\}
\end{equation}

\noindent Then

\begin{align}\label{5.2}
& \| v \|_{L_{t}^{\infty} \dot{H}^{1}(\mathbb{R} \times \mathbb{R}^{4})} + \| v_{t} \|_{L_{t}^{\infty} L_{x}^{2}(\mathbb{R} \times \mathbb{R}^{4})}  \notag \\
&\leq \| v_{0} \|_{\dot{H}^{1}(\mathbb{R}^{4})} + \| v_{1} \|_{L_{x}^{2}(\times \mathbb{R}^{4})}  +  \left(\sum_{j} 2^{j/2} \| F \|_{L_{t,x}^{2}(\mathbb{R} \times \{ x : 2^{j} \leq |x| \leq 2^{j + 1} \})}\right),
\end{align}
and 
\begin{align}\label{5.3}
&\Big\Vert |x|^{-3/2} v \Big\Vert_{L_{t,x}^{2}(\mathbb{R} \times \mathbb{R}^{4})} + \left(\sup_{\rho > 0} \rho^{-1/2} \| \nabla v \|_{L_{t,x}^{2}(\mathbb{R} \times \{ x : |x| \leq \rho \})}\right) \notag\\
&+ \left(\sup_{\rho > 0} \rho^{-1/2} \| v_{t} \|_{L_{T,x}^{2}(\mathbb{R} \times \{ x : |x| \leq \rho \})}\right) \notag\\
&\leq \| v_{0} \|_{\dot{H}^{1}(\mathbb{R}^{4})} + \| v_{1} \|_{L^{2}(\mathbb{R}^{4})} + \left(\sum_{j} 2^{j/2} \| F \|_{L_{T,x}^{2}(\mathbb{R} \times \{ x : 2^{j} \leq |x| \leq 2^{j + 1} \})}
\right),
\end{align}

\end{theorem}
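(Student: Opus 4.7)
The approach is to split $v = v_h + v_i$ via Duhamel's principle, where $v_h$ solves the homogeneous Cauchy problem with data $(v_0,v_1)$ and $v_i$ is the Duhamel term with zero data and source $F$. For $v_h$, the $\ptl_T$-multiplier identity already recorded above yields $\|v_h\|_{L^\infty_T\dot H^1}+\|(v_h)_T\|_{L^\infty_T L^2}\le\|v_0\|_{\dot H^1}+\|v_1\|_{L^2}$, Lemma \ref{firstmora} controls the $\||x|^{-3/2}v_h\|_{L^2_{T,x}}$ piece in \eqref{5.3}, and Lemma \ref{t3} controls the two $\sup_\rho\rho^{-1/2}$ seminorms, so the claim for $v_h$ is immediate. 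What remains is to estimate the same quantities applied to $v_i$ by $\sum_j 2^{j/2}\|F\|_{L^2_{T,x}(A_j)}$, where $A_j:=\{2^j\le |x|\le 2^{j+1}\}$.

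\textbf{Multiplier identity with source.} I would re-run the vector-fields calculations of Lemmas \ref{firstmora} and \ref{t3} with $v$ replaced by $v_i$, observing that now $\grad^\nu\check{\mathbf T}_{\nu\mu}(v_i)=F\,\grad_\mu v_i$. Applying Stokes to the sum current $J_S=\check{\mathbf T}(v_i,\mathfrak X)+J_1[v_i]$ on the slab $[0,T]\times\mathbb R^4$ produces the same good positive bulk terms on the left (precisely the Morawetz quantities of \eqref{5.3}), boundary terms on $\check\Sigma_T$ controlled via Hardy and the dominant energy condition by $\|v_i(T)\|_{\dot H^1}+\|\ptl_T v_i(T)\|_{L^2}$, no contribution at $\check\Sigma_0$ since $v_i$ vanishes there, and the extra source integrals
\begin{equation*}
\int_{[0,T]\times\mathbb R^4} F\,\mathfrak F(R)\,\ptl_R v_i\,\bar\mu_{\check g}\q\text{and}\q \halb\int_{[0,T]\times\mathbb R^4}\kappa\,F\,v_i\,\bar\mu_{\check g}.
\end{equation*}
Carried out with $(\mathfrak F,\kappa)=(\tfrac13,\,1/|x|)$ and then with $(\mathfrak F,\kappa)=(\tfrac13 R\,\chi(x/\rho),\,\chi(x/\rho))$, this produces all Morawetz pieces on the left of \eqref{5.3}. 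Simultaneously, the $\ptl_T$-multiplier identity on $[0,T]$ with source $\int F\,\ptl_T v_i$ gives the energy quantity of \eqref{5.2}.

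\textbf{Closing the estimates.} All source integrals are handled by a dyadic Cauchy--Schwarz in space. Since $|\mathfrak F(R)|\les 2^j$ on $A_j$ for each multiplier, and using Hardy on $\kappa F v_i$ to exchange $|x|^{-1}v_i$ for $|x|^{-3/2}v_i$ with a factor of $2^{j/2}$,
\begin{equation*}
\Big|\int F\,\mathfrak F(R)\,\ptl_R v_i\,\bar\mu_{\check g}\Big|\le \sum_j 2^j\|F\|_{L^2_{T,x}(A_j)}\|\ptl_R v_i\|_{L^2_{T,x}(A_j)}\le \Big(\sum_j 2^{j/2}\|F\|_{L^2_{T,x}(A_j)}\Big)\,\mathcal M(v_i),
\end{equation*}
where $\mathcal M(v_i):=\sup_j 2^{-j/2}\|\ptl_R v_i\|_{L^2_{T,x}(A_j)}$ is bounded, up to a constant, by the right-hand side of \eqref{5.3}; the $\int \kappa F v_i$ and $\int F\ptl_T v_i$ sources are treated identically. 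Young's inequality then absorbs a small multiple of $\mathcal M(v_i)^2$ back into the left, producing \eqref{5.3}. Feeding the resulting Morawetz control of $\ptl_T v_i$ into the $\ptl_T$-energy identity then yields \eqref{5.2}. The main difficulty is precisely this circular structure: the $F$-correction in the Morawetz identity contains the very seminorms being estimated, so \eqref{5.3} must be proved in one closed bootstrap rather than as a chain of one-sided inequalities, and the atomic $\ell^1_j\!\leftrightarrow\!\ell^\infty_j$ duality between the weighted $F$-norm and $\mathcal M(v_i)$ is what makes the absorption quantitatively admissible.
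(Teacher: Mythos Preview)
Your multiplier-with-source argument for \eqref{5.3} is correct and is exactly what the paper does: rerun the currents $J_{\mathfrak X_1}+J_1$ and $J_{\mathfrak X_2}+J_{\kappa_2}$ with $\Box v=F$, pick up the extra bulk terms $\int F\,\mathfrak F(R)\partial_R v$ and $\int \kappa Fv$, decompose these dyadically in $|x|$ and pair them $\ell^1_j\times\ell^\infty_j$ against the Morawetz seminorms themselves, then absorb. The paper's display \eqref{5.11}--\eqref{5.14} is precisely this closed system.

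The one genuine difference is in how you obtain \eqref{5.2}. You treat the energy bound as one more inequality in the bootstrap (via $\tfrac{d}{dT}E(v_i)=\int F\,\partial_T v_i$, then $\ell^1_j\times\ell^\infty_j$ against $\mathcal M(v_i)$). The paper instead proves \eqref{5.2} \emph{first}, independently of \eqref{5.3}, by duality: for $f\in L^2$ it writes
\[
\Big\langle f,\ \nabla\!\int_0^t\tfrac{\sin((t-\tau)\sqrt{-\Delta})}{\sqrt{-\Delta}}F(\tau)\,d\tau\Big\rangle
=\int_0^t\Big\langle \nabla\tfrac{\sin((t-\tau)\sqrt{-\Delta})}{\sqrt{-\Delta}}f,\ F(\tau)\Big\rangle d\tau,
\]
applies the \emph{homogeneous} second Morawetz estimate (Lemma~\ref{t3}) to the free wave with data $f$, and pairs the resulting $\sup_j 2^{-j/2}$ bound against $\sum_j 2^{j/2}\|F\|_{L^2(A_j)}$. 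This gives \eqref{5.2} with no circularity. Only then are the Morawetz identities rerun with the source; \eqref{5.2} is what controls the $\check\Sigma_T$ boundary terms in that step.

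Both routes close. Yours is perhaps more uniform but enlarges the absorption system and needs an a priori finiteness/continuity step to pass from $D^2\lesssim KD$ to $D\lesssim K$. The paper's duality argument for \eqref{5.2} is a one-line consequence of the free Morawetz and sidesteps that, which is the main payoff of ordering the proof their way.
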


\noindent \emph{Proof:} We start with $(\ref{5.2})$, which is the dual of $(\ref{3.2})$. If $f \in L^{2}(\mathbb{R}^{4})$ then

\begin{align}\label{5.5}
&\langle f, \nabla \int_{0}^{t} \frac{\sin((t - \tau) \sqrt{-\Delta})}{\sqrt{-\Delta}} F(\tau) d\tau \rangle 
\notag \\
&= \int_{0}^{t} \langle \nabla \frac{\sin((t - \tau) \sqrt{-\Delta})}{\sqrt{-\Delta}} f, F(\tau) \rangle d\tau \notag\\
&\leq  \left(\sup_{j} 2^{-j/2} \| \nabla \frac{\sin((t - \tau) \sqrt{-\Delta})}{\sqrt{-\Delta}} f \|_{L_{t,x}^{2}(\mathbb{R} \times \{ x : 2^{j} \leq |x| \leq 2^{j + 1} \})} \right)\notag \\
&\quad \cdot \left(\sum_{j} 2^{j/2} \| F \|_{L_{t,x}^{2}(\mathbb{R} \times \{ x : 2^{j} \leq |x| \leq 2^{j + 1} \})}\right) \notag \\
&\leq  \| f \|_{L^{2}(\mathbb{R}^{4})} \left(\sum_{j} 2^{j/2} \| F \|_{L_{T, x}^{2}(\mathbb{R} \times \{ x : 2^{j} \leq |x| \leq 2^{j + 1} \})}\right).
\end{align}

\noindent An identical computation also proves

\begin{align}\label{5.8}
&\langle f, \partial_{t} \int_{0}^{t} \frac{\sin((t - \tau) \sqrt{-\Delta})}{\sqrt{-\Delta}} F(\tau) d\tau \rangle \notag \\
=& \langle f, \int_{0}^{t} \cos((t - \tau) \sqrt{-\Delta}) F(\tau) d\tau \rangle \notag\\
= &\int_{\mathbb{R}} \langle \int_{t}^{\infty} \cos((t - \tau) \sqrt{-\Delta}) f dt, F(\tau) \rangle d\tau \notag\\ 
\leq & \| f \|_{L^{2}(\mathbb{R}^{4})} \left(\sum_{j} 2^{j/2} \| F \|_{L_{t,x}^{2}(\mathbb{R} \times \{ x : 2^{j} \leq |x| \leq 2^{j + 1} \})}\right).
\end{align}

\noindent $(\ref{5.3})$ is proved in a similar way as on the Morawetz estimates in Lemmas $\ref{firstmora}$ and $\ref{t3}$
but adjusting the identities and estimates for $\square v = F.$

\begin{align}\label{5.10}
 \int 
 \frac{1}{\vert x\vert^3} |v(t,x)|^{2} dx dt \leq& \, \| v_{0} \|_{\dot{H}^{1}(\mathbb{R}^{4})}^{2} +
  \| v_{1} \|_{L_{x}^{2}(\mathbb{R}^{4})}^{2} +\int |\nabla v(t,x)| |F(t,x)| dxdt \notag \\
  &\quad + \int \frac{1}{|x|} |v(t,x)| |F(t,x)| dxdt
\end{align}

\noindent  by  $(\ref{5.2})$,

\begin{align}\label{5.11}
\int \int \frac{1}{|x|^{3}} |v(t,x)|^{2} dx dt \leq& \| v_{0} \|_{\dot{H}^{1}(\mathbb{R}^{4})}^{2} + \| v_{1} \|_{L_{x}^{2}(\mathbb{R}^{4})}^{2}  \notag\\
&+ \left(\int \int \frac{1}{|x|^{3}} |v(t,x)|^{2} dx dt\right)^{1/2} \\
&\quad\quad\quad\cdot \left(\sum_{j} 2^{j/2} \| F \|_{L_{t,x}^{2}(\mathbb{R} \times \{ x : 2^{j} \leq |x| \leq 2^{j + 1} \})}\right) \notag\\ 
&+ \left(\sup_{j} 2^{-j/2} \| \nabla v \|_{L_{t,x}^{2}(\mathbb{R} \times \{ x : 2^{j} \leq |x| \leq 2^{j + 1} \})}\right) \notag\\
&\quad\quad\quad\cdot\left(\sum_{j} 2^{j/2} \| F \|_{L_{t,x}^{2}(\mathbb{R} \times \{ x : 2^{j} \leq |x| \leq 2^{j + 1} \})}\right).
\end{align}

\noindent As in Lemma $\ref{t3}$, again this time using $\square v = F$,

\begin{align}\label{5.13}
&\frac{1}{\rho} \int \chi(\frac{x}{\rho}) [v_{t}(t,x)^{2} + |\nabla v(t,x)|^{2}] dxdt  + C \int \frac{1}{|x|^{3}}
|v(t,x)|^{2} dxdt\notag \\
&\quad \leq \| v_{0} \|_{\dot{H}^{1}(\mathbb{R}^{4})}^{2} + \| v_{1} \|_{L_{x}^{2}(\mathbb{R}^{4})}^{2} \notag \\ & \quad \quad + \int |\nabla v(t,x)| |F(t,x)| dx + \int \frac{1}{|x|} |v(t,x)| |F(t,x)| dx.
\end{align}
where $\chi(x)$is an in Lemma 2.2.
Consequently,
\begin{align}\label{5.14}
&\frac{1}{\rho} \int_{\mathbb{R}} \int_{|x| \leq \rho} [|\nabla v(t,x)|^{2} + |v_{t}(t,x)|^{2}] dx dt \notag
\notag \\ 
\leq& \| v_{0} \|_{\dot{H}^{1}(\mathbb{R}^{4})}^{2} + \| v_{1} \|_{L_{x}^{2}(\mathbb{R}^{4})}^{2} + \int_{\mathbb{R}} \int \frac{1}{|x|^{3}} |v(t,x)|^{2} dx dt \notag\\ 
&+ \left(\sum_{j} 2^{j/2} \| F \|_{L_{t,x}^{2}(\mathbb{R} \times \{ x : 2^{j} \leq |x| \leq 2^{j + 1} \})}\right)^{2}  \notag \\ 
&+ \left( \sup_{j} 2^{-j/2} \| \nabla v \|_{L_{t,x}^{2}(\mathbb{R} \times \{ x : 2^{j} \leq |x| \leq 2^{j + 1} \})} \right) \cdot \notag\\ 
&\quad\quad\quad\cdot \left(\sum_{j} 2^{j/2} \| F \|_{L_{t,x}^{2}(\mathbb{R} \times \{ x : 2^{j} \leq |x| \leq 2^{j + 1} \})}\right) \notag \\
&+ \left(\int_{\mathbb{R}} \int \frac{1}{|x|^{3}} |u(t,x)|^{2} dx dt \right)^{1/2} \cdot \notag\\ 
&\quad\quad\quad \cdot \left(\sum_{j} 2^{j/2} \| F \|_{L_{t,x}^{2}(\mathbb{R} \times \{ x : 2^{j} \leq |x| \leq 2^{j + 1} \})}\right).
\end{align}

\noindent Combining $(\ref{5.11})$ and $(\ref{5.14})$ proves $(\ref{5.3})$.\vspace{5mm}

\subsection{Strichartz Esimates}

\begin{theorem}[Endpoint Strichartz estimate]\label{t1}
Suppose that $v$ solves the inhomogeneous wave equation

\begin{equation}
\left. \begin{array}{rcl}
\leftexp{4+1}{\square}\, v &=&F_1 + F_2\,\,\,\,\,\,\,\,\,\,\,\,\,\,\, \,\,\,\,\,\,\,\,\,\,\,\,\,\,\,\,\,\,\,\,\,\,\,\,\,\,\,\textnormal{on}\,\, \mathbb{R}^{4+1}\\
 v_0 = v (0, x) \in \dot{H}^{1}(\mathbb{R}^{4})  & \textnormal{and}& v_1 = \ptl_T v (0, x)
 \in L^{2}(\mathbb{R}^{4}) \\\end{array} 
\right\}
\end{equation}


\noindent Then,

\begin{align}\label{1.2}
\| v \|_{L_{T}^{2} L_{x}^{8}(\mathbb{R} \times \mathbb{R}^{4})} \leq  \| v_{0} \|_{\dot{H}^{1}(\mathbb{R}^{4})} + \| v_{1} \|_{L^{2}(\mathbb{R}^{4})} +& \| \nabla F_{1} \|_{L_{T}^{2} L_{x}^{8/7}(\mathbb{R} \times \mathbb{R}^{4})} \notag\\  +& \| F_{2} \|_{L_{T}^{1} L_{x}^{2}(\mathbb{R} \times \mathbb{R}^{4})}.
\end{align}
\end{theorem}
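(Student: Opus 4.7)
The approach is a Duhamel decomposition combined with the Keel--Tao Strichartz theory for the $(4+1)$-dimensional wave equation. Split $v = v_{h} + w_{1} + w_{2}$, where
\[
v_{h}(T) = \cos(T\sqrt{-\Delta})\,v_{0} + \frac{\sin(T\sqrt{-\Delta})}{\sqrt{-\Delta}}\,v_{1}, \qquad w_{j}(T) = \int_{0}^{T} \frac{\sin((T-s)\sqrt{-\Delta})}{\sqrt{-\Delta}}\,F_{j}(s)\,ds,
\]
so that by the triangle inequality, \eqref{1.2} reduces to controlling $\|v_{h}\|_{L_{T}^{2} L_{x}^{8}}$, $\|w_{1}\|_{L_{T}^{2} L_{x}^{8}}$, and $\|w_{2}\|_{L_{T}^{2} L_{x}^{8}}$ separately.

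For $v_{h}$, the pair $(q,r)=(2,8)$ is Strichartz-admissible at the $\dot{H}^{1}$-level in $\mathbb{R}^{4+1}$: with $n=4$ the scaling identity $\tfrac{1}{q} + \tfrac{n}{r} = \tfrac{n}{2} - 1$ reads $\tfrac{1}{2} + \tfrac{1}{2} = 1$, and the admissibility $\tfrac{1}{q} + \tfrac{n-1}{2r} \leq \tfrac{n-1}{4}$ reads $\tfrac{11}{16} \leq \tfrac{3}{4}$; hence the standard homogeneous Keel--Tao estimate gives $\|v_{h}\|_{L_{T}^{2} L_{x}^{8}} \lesssim \|v_{0}\|_{\dot{H}^{1}} + \|v_{1}\|_{L^{2}}$. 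For $w_{2}$, I regard the Duhamel integrand as a superposition in $s$ of free waves with initial data $(0, F_{2}(s)) \in \dot{H}^{1} \times L^{2}$; applying the homogeneous bound at each fixed $s$ and pulling the $s$-integral outside via Minkowski's inequality yields $\|w_{2}\|_{L_{T}^{2} L_{x}^{8}} \lesssim \int \|F_{2}(s)\|_{L^{2}_{x}}\,ds = \|F_{2}\|_{L_{T}^{1} L_{x}^{2}}$.

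The contribution of $w_{1}$ requires the dual (inhomogeneous) form of the $(2,8)$ Strichartz estimate. Let $\mathcal{S}\colon \dot{H}^{1} \times L^{2} \to L_{T}^{2} L_{x}^{8}$ denote the bounded free-wave operator produced above. Its adjoint satisfies $\mathcal{S}^{\ast}\colon L_{T}^{2} L_{x}^{8/7} \to \dot{H}^{-1} \times L^{2}$, and the composition $\mathcal{S}\mathcal{S}^{\ast}$ agrees, modulo Riesz transforms and a factor of $(-\Delta)^{-1}$, with the untruncated Duhamel operator $G \mapsto \int_{\mathbb{R}} \frac{\sin((T-s)\sqrt{-\Delta})}{\sqrt{-\Delta}} G(s)\,ds$. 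Applying this bound to $G = |\nabla| F_{1}$ and using $L^{8/7}(\mathbb{R}^{4})$-boundedness of the Riesz transforms to exchange $(-\Delta)^{1/2}$ with $\nabla$ yields the untruncated inhomogeneous estimate
\[
\Bigl\| \int_{\mathbb{R}} \frac{\sin((T-s)\sqrt{-\Delta})}{\sqrt{-\Delta}}\,F_{1}(s)\,ds \Bigr\|_{L_{T}^{2} L_{x}^{8}} \lesssim \|\nabla F_{1}\|_{L_{T}^{2} L_{x}^{8/7}}.
\]
Passing from $\int_{\mathbb{R}}$ to the retarded integral $\int_{0}^{T}$ at the shared endpoint time exponent $q = \tilde{q}' = 2$ is accomplished by Keel--Tao's bilinear endpoint interpolation.

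The main obstacle is precisely this final truncation step: since the target and source time exponents coincide, the Christ--Kiselev lemma---which would ordinarily reduce the retarded Duhamel integral to the untruncated one---is unavailable, and one must instead deploy the full Keel--Tao bilinear $TT^{\ast}$ endpoint machinery. Summing the three contributions then yields \eqref{1.2}.
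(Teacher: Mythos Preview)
Your proposal is correct and, in fact, supplies considerably more detail than the paper itself, which simply writes ``See Keel and Tao \cite{keel_tao}'' and nothing more. Your Duhamel splitting, verification that $(q,r)=(2,8)$ is wave-admissible at the $\dot H^{1}$ scale in $4+1$ dimensions, treatment of the $F_{2}$ piece via Minkowski plus the homogeneous estimate, and handling of the $F_{1}$ piece via the $TT^{\ast}$/bilinear endpoint machinery (correctly noting that Christ--Kiselev is unavailable at $q=\tilde q'=2$) is exactly how one unpacks the cited reference; there is no alternative route being compared here.
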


\noindent \emph{Proof:} See Keel and Tao \cite{keel_tao}. $\Box$
\begin{theorem}[Radially symmetric Strichartz estimate]\label{t4}
Suppose $v$ solves the wave equation

\begin{equation}\label{4.1}
\left. \begin{array}{rcl}
\leftexp{4+1}{\square}\, v &=& 0\,\,\,\,\,\,\,\,\,\,\,\,\,\,\, \,\,\,\,\,\,\,\,\,\,\,\,\,\,\,\,\,\,\,\,\,\,\,\,\,\,\,\textnormal{on}\,\, \mathbb{R}^{4+1}\\
 v_0 = v (0, x) & \textnormal{and}& v_1 = \ptl_T v (0, x) \,\,\,\,\,\,\,\, \textnormal{on}\,\, \mathbb{R}^4\\\end{array} 
\right\}
\end{equation}

\noindent with $v_{0}$ and $v_{1}$ radially symmetric. Then

\begin{equation}\label{4.2}
\Big\| |x|^{1/2} v \Big\|_{L_{T}^{2} L_{x}^{\infty}(\mathbb{R} \times \mathbb{R}^{4})} \leq\| v_{0} \|_{\dot{H}^{1}(\mathbb{R}^{4})} + \| v_{1} \|_{L^{2}(\mathbb{R}^{4})}.
\end{equation}
\end{theorem}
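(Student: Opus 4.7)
The plan is to reduce the radial problem in $\mathbb{R}^{4+1}$ to a radial problem in $\mathbb{R}^{3+1}$ via the substitution $h(t,x):=|x|^{1/2}v(t,x)$, viewed as a radial function on $\mathbb{R}^{3}$, and then invoke the classical Klainerman--Machedon endpoint Strichartz estimate for radial waves in three spatial dimensions. A short computation using $\Delta^{\mathrm{rad}}_{\mathbb{R}^{4}}=\partial_r^{2}+\frac{3}{r}\partial_r$ and $\Delta^{\mathrm{rad}}_{\mathbb{R}^{3}}=\partial_r^{2}+\frac{2}{r}\partial_r$ gives $\Delta_{\mathbb{R}^{4}}v=r^{-1/2}[\Delta_{\mathbb{R}^{3}}h-\frac{3}{4r^{2}}h]$, so $v$ solves the free $4+1$ wave equation if and only if $h$ solves the radial wave equation with a \emph{repulsive} inverse-square potential,
\[ h_{tt}-\Delta_{\mathbb{R}^{3}}h+\frac{3}{4|x|^{2}}h=0\qquad\text{on }\mathbb{R}\times\mathbb{R}^{3}. \]

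Next I would verify that the two norms match: $\||x|^{1/2}v\|_{L^{2}_{T}L^{\infty}_{x}(\mathbb{R}\times\mathbb{R}^{4})}=\|h\|_{L^{2}_{T}L^{\infty}_{x}(\mathbb{R}\times\mathbb{R}^{3})}$, both being equal to $\bigl\|\sup_{r>0}r^{1/2}|v(t,r)|\bigr\|_{L^{2}_{T}}$. On the data side, a direct computation with polar measures gives $\|h_{1}\|_{L^{2}(\mathbb{R}^{3})}^{2}\sim\int_{0}^{\infty}r^{3}v_{1}^{2}\,dr\sim\|v_{1}\|_{L^{2}(\mathbb{R}^{4})}^{2}$, and integration by parts combined with the Hardy inequality $\||x|^{-1}v_{0}\|_{L^{2}(\mathbb{R}^{4})}\lesssim\|v_{0}\|_{\dot H^{1}(\mathbb{R}^{4})}$ controls the cross term and yields $\|h_{0}\|_{\dot H^{1}(\mathbb{R}^{3})}\lesssim\|v_{0}\|_{\dot H^{1}(\mathbb{R}^{4})}$. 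Structurally, the map $v\mapsto r^{1/2}v$ is an isometric isomorphism (up to constants) between radial $L^{2}(\mathbb{R}^{4})$ and radial $L^{2}(\mathbb{R}^{3})$ that intertwines $-\Delta_{\mathbb{R}^{4}}$ with $-\Delta_{\mathbb{R}^{3}}+\frac{3}{4r^{2}}$, and this intertwining is precisely what allows dispersive information to be transported between the two pictures.

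The remaining step is the endpoint estimate $\|h\|_{L^{2}_{T}L^{\infty}_{x}(\mathbb{R}\times\mathbb{R}^{3})}\lesssim\|h_{0}\|_{\dot H^{1}(\mathbb{R}^{3})}+\|h_{1}\|_{L^{2}(\mathbb{R}^{3})}$, which for the free radial $3+1$ wave is the classical Klainerman--Machedon estimate. The inverse-square term is benign because the potential is positive, so $-\Delta+\frac{3}{4r^{2}}$ inherits the dispersive properties of $-\Delta$ on radial functions, either directly through the intertwining above or via the inverse-square Strichartz machinery of Burq--Planchon--Stalker--Tahvildar-Zadeh. The main obstacle is precisely this last step: the $L^{2}_{T}L^{\infty}_{x}$ bound is at the borderline of wave Strichartz admissibility and fails without radial symmetry, so one must carefully verify that the inverse-square perturbation does not produce a logarithmic loss at the origin. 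Using the intertwining, this reduces to a kernel estimate for the $\mathbb{R}^{4}$ half-wave propagator applied to radial data, handled by a Hankel-transform representation together with the pointwise decay $|J_{1}(\rho)/\rho|\lesssim\rho^{-3/2}$ of the associated Bessel kernel.
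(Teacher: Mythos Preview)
Your reduction $h=r^{1/2}v$ is correct and conceptually clean: it intertwines the radial $4+1$ free wave with the radial $3+1$ wave carrying the repulsive potential $\tfrac{3}{4}|x|^{-2}$, and the norm and data matching are exactly as you say (the energy of $h$ including the potential term equals $\|v_0\|_{\dot H^1(\mathbb{R}^4)}^2+\|v_1\|_{L^2(\mathbb{R}^4)}^2$ on the nose). This is a genuinely different route from the paper. The paper never passes through a lower dimension; it writes the $4+1$ solution via the explicit even-dimensional Kirchhoff formula, handles the easy region $|x|\gtrsim T$ by finite propagation speed and radial Sobolev, and in the main region $|x|\lesssim T$ collapses the spherical averages to one-dimensional integrals that are controlled by the Hardy--Littlewood maximal function of $g(s)s^{3/2}\in L^2(\mathbb{R})$. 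That argument is fully self-contained and uses no external Strichartz input.

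The soft spot in your proposal is the last step. The Klainerman--Machedon endpoint you cite is for the \emph{free} radial $3+1$ wave, and its proof goes through the d'Alembert identity $rh(t,r)=F(t+r)+G(t-r)$ for $w=rh$; the $\tfrac{3}{4}r^{-2}$ term destroys that identity, so you cannot simply invoke it. Burq--Planchon--Stalker--Tahvildar-Zadeh covers the non-endpoint Strichartz range for inverse-square potentials but does not deliver the radial $L^2_TL^\infty_x$ endpoint. Your fallback --- ``use the intertwining to reduce to a Hankel kernel estimate for the $\mathbb{R}^4$ half-wave'' --- is then circular: it is precisely the original estimate, now to be proved by Bessel asymptotics instead of Kirchhoff. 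That is a legitimate alternative strategy, but the sketch is thin: the pointwise decay $|J_1(\rho)|\lesssim\rho^{-1/2}$ is not enough by itself, and you still need a $TT^*$ or stationary-phase argument to close the $L^2_T$ bound. In short, the three-dimensional detour is elegant packaging but does not save work; the substance remains a direct kernel estimate, which the paper carries out explicitly by a different (physical-space) method.
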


\noindent \emph{Proof:} To prove this for $|x| >> T$ we only need to use Hardy's inequality, finite propagation speed, and the Sobolev embedding theorem. Suppose $|x| \geq 32T$ and make a partition of unity.\vspace{5mm}

\noindent Let $\phi \in C_{0}^{\infty}(\mathbb{R}^{4})$ be a radial, decreasing function, with $\phi(x) = 1$ for $|x| \leq 1$ and $\phi(x)$ is supported on $|x| \leq 2$. Then let

\begin{equation}\label{4.31}
\chi(x) = \phi(\frac{x}{2}) - \phi(x).
\end{equation}

\noindent If $x \neq 0$,

\begin{equation}\label{4.32}
1 = \sum_{j \in \mathbb{Z}} \chi(2^{j} x),
\end{equation}

\noindent and for each $k \in \mathbb{Z}$ let

\begin{equation}
\aligned
\chi_{k}(x) = \chi(2^{-k} x), \\
\tilde{\chi}_{k}(x) = \chi(2^{-k + 2} x) + \chi(2^{-k + 1} x) + \chi(2^{-k} x) + \chi(2^{-k - 1} x) + \chi(2^{-k - 2} x).
\endaligned
\end{equation}



\noindent Then by finite propagation speed, for $0 \leq T \leq 2^{k - 4}$ and $2^{k} \leq |x| \leq 2^{k + 1}$,

\begin{equation}\label{4.33}
\chi_{k}(x) v(T,x) = \cos(T \sqrt{-\Delta}) \tilde{\chi}_{k}(x) v_0(x) + \frac{\sin(T \sqrt{-\Delta})}{\sqrt{-\Delta}} \tilde{\chi}_{k}(x) v_1(x).
\end{equation}

\noindent Then by the radial Sobolev embedding theorem and conservation of energy,

\begin{align}\label{4.34}
&\Big \| |x| \chi_{k}(x) v(T,x) \Big\|_{L_{T}^{\infty} L_{x}^{\infty}([0, 2^{k - 4}] \times \{ x : 2^{k} \leq |x| \leq 2^{k + 1} \})}  \\
\leq& \Big\| \tilde{\chi}_{k} v_0 \Big\|_{\dot{H}^{1}(\mathbb{R}^{4})} + \Big\| \tilde{\chi}_{k} v_1 \Big\|_{L^{2}(\mathbb{R}^{4})} \\
\leq& \Big\| \nabla v_0 \Big\|_{L^{2}(2^{k - 2} \leq |x| \leq 2^{k + 4})} + \Bigg\| \frac{1}{|x|} v_0 \Bigg\|_{L^{2}(2^{k - 2} \leq |x| \leq 2^{k + 4})}  \notag \\ & \quad + \Big\|\, v_1\, \Big\|_{L^{2}(2^{k - 2} \leq |x| \leq 2^{k + 4})},
\end{align}

\noindent and by H{\"o}lder's inequality

\begin{align}\label{4.35}
&\Big\| |x|^{1/2} v(T,x) \Big\|_{L_{T}^{2} L_{x}^{\infty}([0, 2^{k - 4}] \times \{ x : 2^{k} \leq |x| \leq 2^{k + 1} \})} \notag\\
& \leq \Big\| \nabla v_0 \Big \|_{L^{2}(2^{k - 2} \leq |x| \leq 2^{k + 4})}  + \Bigg\| \frac{1}{|x|} v_0 \Bigg\|_{L^{2}(2^{k - 2} \leq |x| \leq 2^{k + 4})} \notag \\ & \quad+ \Big\|\, v_1 \,\Big\|_{L^{2}(2^{k - 2} \leq |x| \leq 2^{k + 4})}.
\end{align}

\noindent Then by Hardy's inequality and $(\ref{4.33})$,

\begin{align}\label{4.36}
&\Big\| |x|^{1/2} v(T,x) \Big\|_{L_{T}^{2} L_{x}^{\infty}(\mathbb{R} \times \{ x : |x| \geq 32 T\})}^{2} \notag \leq \sum_{k} 2^{k} \| |x|^{1/2} \chi_{k}(x) v(T,x) \|_{L_{T,x}^{\infty}}^{2} \\
&\leq \sum_{k} \Big\| \nabla v_0 \Big\|_{L^{2}(2^{k - 2} \leq |x| \leq 2^{k + 4})}^{2}  + \Bigg\| \frac{1}{|x|} v_0 \Bigg\|_{L^{2}(2^{k - 2} \leq |x| \leq 2^{k + 4})}^{2}  \notag \\ & \quad \quad +\Big \| \,v_1\, \Big\|_{L^{2}(2^{k - 2} \leq |x| \leq 2^{k + 4})}^{2} \notag\\
&\leq \| \nabla v_0 \|_{L^{2}(\mathbb{R}^{4})}^{2} + \| v_1 \|_{L^{2}(\mathbb{R}^{4})}^{2}.
\end{align}

\noindent \textbf{Remark:} This estimate is not necessarily sharp in this region.\vspace{5mm}

\noindent Now consider $\vert x \vert \leq 32T$  and suppose $v_{0} = 0$ and $v_{1} = g \in L^{2}(\mathbb{R}^{4})$. Without loss of generality suppose $T > 0$. Then by the fundamental solution to the wave equation (see for example Sogge \cite{sogge_book}),

\begin{equation}\label{4.3}
u(T,x) = 3T \int_{|y| < 1} \frac{g(x + Ty)}{(1 - |y|^{2})^{1/2}} dy + 
T^{2} \int_{|y| < 1} \frac{y \cdot (\nabla g)(x + Ty)}{(1 - |y|^{2})^{1/2}} dy.
\end{equation}

\noindent Suppose $\omega$ is the surface area of the unit sphere $S^{3} \subset \mathbb{R}^{4}$. If $g$ is radial then $v$ is radial, so

\begin{equation}\label{4.4}
v(T,x) = \frac{1}{\omega |x|^{3}} \int_{\partial B(0, |x|)} v(T,z) d\sigma(z).
\end{equation}

\noindent $g$ radial implies that $g(y_{1}, y_{2}, y_{3}, y_{4}) = g(|y_{1}|, (y_{2}^{2} + y_{3}^{2} + y_{4}^{2})^{1/2})$. For any $y \in \mathbb{R}^{4}$, $|y| < 1$ let $\cal{R}$ be the rotation matrix that rotates $y \in \mathbb{R}^{4}$ to the vector $|y| e_{1}$, where $e_{1} = (1, 0, 0, 0)$. $g$ radial implies

\begin{equation}\label{4.5}
\aligned
\frac{3T}{\omega |x|^{3}} \int_{|y| < 1} \frac{1}{(1 - |y|^{2})^{1/2}} \int_{\partial B(0, |x|)} g(x + 
Ty) d\sigma(x) dy \\ = \frac{3T}{\omega |x|^{3}} \int_{|y| < 1} \frac{1}{(1 - |y|^{2})^{1/2}} \int_{\partial B(0, |x|)} g(\cal{R}(x + Ty)) d\sigma(x) dy \\ = 3 T \int_{0}^{1} \frac{y_{1}^{3}}{(1 - y_{1}^{2})^{1/2}} \int_{-\frac{\pi}{2}}^{\frac{\pi}{2}} g(T y_{1} + |x| \sin(\theta), |x| \cos(\theta)) \cos(\theta)^{2} d\theta dy_{1}.
\endaligned
\end{equation}

\noindent Making a change of variables,

\begin{equation}\label{4.6}
= 3T \int_{0}^{1} \frac{y_{1}^{3}}{(1 - y_{1}^{2})^{1/2}} \int_{-1}^{1} g(T y_{1} + |x| v, |x| (1 - v^{2})^{1/2}) (1 - v^{2})^{1/2} dv dy_{1}.
\end{equation}

\noindent Choosing 

\begin{equation}\label{variable}
\aligned
s^{2} = (Ty_{1} + |x| v)^{2} + |x|^{2} (1 - v^{2}) &= T^{2} y_{1}^{2} + |x|^{2} + 2T |x| y_{1} v, \\ 
s ds &= T |x| y_{1} dv,
\endaligned
\end{equation}

\begin{equation}\label{4.7}
(\ref{4.6}) = \frac{3}{|x|} \int_{0}^{1} \frac{y_{1}^{2}}{(1 - y_{1}^{2})^{1/2}} \int_{||x| - T y_{1}|}^{|x| + T y_{1}} g(s) s (1 - v(s)^{2})^{1/2} ds dy_{1}.
\end{equation}

\noindent Then since $|x|^{1/2} (1 - v^{2})^{1/2} \leq s^{1/2}$,

\begin{equation}\label{4.8}
|x|^{1/2} (\ref{4.7}) \leq \int_{0}^{1} \frac{y_{1}^{2}}{(1 - y_{1}^{2})^{1/2}} \mathcal M(g(s) s^{3/2})(T y_{1}),
\end{equation}

\noindent where $\mathcal M$ is the Maximal function in one dimension,

\begin{equation}
\mathcal M f(x) = \sup_{R > 0} \frac{1}{R} \int_{x - R}^{x + R} f(t) dt.
\end{equation}

\noindent It is a well - known fact (see for example \cite{taylor_book}) that for any $1 < p \leq \infty$,

\begin{equation}
\| \mathcal M f \|_{L^{p}(\mathbf{R})} \lesssim_{p} \| f \|_{L^{p}(\mathbf{R})}.
\end{equation}

\noindent Then $g \in L^{2}(\mathbb{R}^{4})$ radial implies $g(s) s^{3/2} \in L^{2}([0, \infty))$, so by a change of variables $\| \mathcal M(g(s) s^{3/2})(T y_{1}) \|_{L_{T}^{2}(\mathbb{R})} \leq \frac{1}{\sqrt{y_{1}}}$, so

\begin{equation}\label{4.9}
\| (\ref{4.8}) \|_{L_{T}^{2}(\mathbb{R})} \leq \int_{0}^{1} \frac{y_{1}^{3/2}}{(1 - y_{1}^{2})^{1/2}} \| g \|_{L^{2}(\mathbb{R}^{4})} dy_{1} \leq \| g \|_{L^{2}(\mathbb{R}^{4})}.
\end{equation}

\noindent Next we compute

\begin{equation}\label{4.10}
T^{2} \int_{|y| < 1} \frac{y \cdot (\nabla g)(x + Ty)}{(1 - |y|^{2})^{1/2}} dy.
\end{equation}

\noindent It will be convenient to split this integral into two pieces,

\begin{align}\label{4.11}
&=  T^{2} \int_{\sup(\frac{5}{6}, 1 - \frac{|x|}{2T}) \leq |y| \leq 1} \frac{y \cdot (\nabla g)(x + Ty)}{(1 - |y|^{2})^{1/2}} dy  \\ &\quad + T^{2} \int_{|y| \leq \sup(\frac{5}{6}, 1 - \frac{|x|}{2T})} \frac{y \cdot (\nabla g)(x + Ty)}{(1 - |y|^{2})^{1/2}} dy.
\end{align}

\noindent Since $g$ is radially symmetric, making the change of variables $(\ref{variable})$,

\begin{align}\label{4.12}
&\frac{1}{\omega |x|^{3}} \int_{\partial B(0, |x|)} T^{2} \int_{\sup(\frac{5}{6}, 1 - \frac{|x|}{2T}) \leq |y| \leq 1} \frac{y \cdot (\nabla g)(x + Ty)}{(1 - |y|^{2})^{1/2}} dy d\sigma(x) \\ =& \frac{T}{|x|} \int_{\sup(\frac{5}{6}, 1 - \frac{|x|}{2T})}^{1} \int_{||x| - Ty_{1}|}^{|x| + Ty_{1}} \frac{y_{1}^{3}}{(1 - y_{1}^{2})^{1/2}} g'(s) (|x| v(s) + T y_{1}) (1 - v(s)^{2})^{1/2} ds dy_{1}.
\end{align}

\noindent Since $(\ref{variable})$ implies $(1 - v^{2}) = 0$ when $s = |x| + T y_{1}$ or $||x| - T y_{1}|$, integrating by parts,

\begin{equation}\label{4.13}
= \frac{-1}{|x|} \int_{\sup(\frac{5}{6}, 1 - \frac{|x|}{2T})}^{1} \int_{||x| - Ty_{1}|}^{|x| + Ty_{1}} \frac{y_{1}^{3}}{(1 - y_{1}^{2})^{1/2}} g(s) s (1 - v(s)^{2})^{1/2} ds dy_{1}
\end{equation}

\begin{equation}\label{4.14}
+ \frac{1}{|x|^{2}} \int_{\sup(\frac{5}{6}, 1 - \frac{|x|}{2T})}^{1} \int_{||x| - Ty_{1}|}^{|x| + Ty_{1}} \frac{y_{1}^{3}}{(1 - y_{1}^{2})^{1/2}} g(s) s (|x| v(s) + T y_{1}) v(s) (1 - v(s)^{2})^{-1/2} ds dy_{1}.
\end{equation}

\noindent Again since $|x| (1 - v(s)^{2})^{1/2} \leq  s$ and $g(s) s^{3/2} \in L^{2}(\mathbb{R})$,

\begin{equation}\label{4.15}
|x|^{1/2} (\ref{4.13}) \leq \int_{\sup(\frac{5}{6}, 1 - \frac{|x|}{2T})}^{1} \frac{y_{1}^{3}}{(1 - y_{1}^{2})^{1/2}} \mathcal M(g)(T y_{1}) dy_{1},
\end{equation}

\noindent so by a change of variables,

\begin{equation}\label{4.16}
\Big\| \,|x|^{1/2} (\ref{4.13}) \,\Big\|_{L_{T}^{2} L_{x}^{\infty}(\mathbb{R} \times \mathbb{R}^{4})} \leq \| g \|_{L^{2}(\mathbb{R}^{4})}.
\end{equation}

\noindent Now take $(\ref{4.14})$.

\begin{equation}\label{4.17}
v(s) = \frac{s^{2} - |x|^{2} - T^{2} y_{1}^{2}}{2T |x| y_{1}},
\end{equation}

\noindent so

\begin{align}\label{4.18}
\frac{1}{(1 + v)^{1/2}} =& \frac{(2T |x| y_{1})^{1/2}}{(s^{2} - (|x| - Ty_{1})^{2})^{1/2}}, \notag\\
 \intertext{and}  
 \frac{1}{(1 - v)^{1/2}} =& \frac{(2 T |x| y_{1})^{1/2}}{(s^{2} - (|x| + Ty_{1})^{2})^{1/2}}.
\end{align}

\noindent Therefore, for any $s$ lying in $[\sup(|x| - T, T - 2|x|), |x| + Ty_{1}]$, $|v| \leq 1$, which implies

\begin{equation}
\frac{v}{(1 - v^{2})^{1/2}} \leq \frac{1}{(1 - v)^{1/2}} + \frac{1}{(1 + v)^{1/2}}.
\end{equation}

\noindent Now since $||x| v + T y_{1}| \leq s$, $g(s) s (|x| v(s) + Ty_{1})^{1/2} \in L^{2}(\mathbb{R})$. Therefore, changing the order of integration, since $y_{1} \geq 1 - \frac{|x|}{2T}$,

\begin{equation}\label{4.20}
\aligned
|x|^{1/2} (\ref{4.14}) \leq \frac{1}{|x|^{1/2}} (\sup_{R > 0} \frac{1}{R} \int_{T - R}^{T + R} |g(s) s^{3/2}| ds) \cdot \\ \sup_{s} s^{1/2} \int_{\sup(\frac{5}{6}, 1 - \frac{|x|}{T})}^{1} \frac{y_{1}^{3}}{(1 - y_{1}^{2})^{1/2}} \\ \times \left(\frac{(2T|x| y_{1})^{1/2}}{(s^{2} - (|x| - Ty_{1})^{2})^{1/2}} + \frac{(2 T |x| y_{1})^{1/2}}{(s^{2} - (|x| + Ty_{1})^{2})^{1/2}} \right) dy_{1}.
\endaligned
\end{equation}

\noindent Since

\begin{align}\label{4.19}
&\int_{\sup(\frac{5}{6}, 1 - \frac{|x|}{T})}^{1} \frac{y_{1}^{3}}{(1 - y_{1}^{2})^{1/2}} \left(\frac{(2T|x| y_{1})^{1/2}}{(s^{2} - (|x| - Ty_{1})^{2})^{1/2}} + \frac{(2 T |x| y_{1})^{1/2}}{(s^{2} - (|x| + Ty_{1})^{2})^{1/2}} \right) dy_{1} \notag \\
&\leq \frac{|x|^{1/2}}{s^{1/2}},
\end{align}

\begin{equation}
|x|^{1/2} (\ref{4.14}) \leq \mathcal M(g)(T),
\end{equation}

\noindent and the estimate of the first term in $(\ref{4.11})$ is complete.\vspace{5mm}

\noindent Now consider the second term in $(\ref{4.11})$. Integrating by parts,

\begin{align}
& T^{2} \int_{|y| < \sup(\frac{5}{6}, 1 - \frac{|x|}{2T})} \frac{y \cdot (\nabla g)(x + Ty)}{(1 - |y|^{2})^{1/2}} dy \notag \\
&= T \cdot \sup\left(\frac{5}{6}, 1 - \frac{|x|}{2T}\right) \int_{|y| = \sup(\frac{5}{6}, 1 - \frac{|x|}{T})} \frac{g(x + Ty)}{(1 - |y|^{2})^{1/2}} dy \label{4.21} \\
&- 4T\int_{|y| < \sup(\frac{5}{6}, 1 - \frac{|x|}{2T})} \frac{g(x + Ty)}{(1 - |y|^{2})^{1/2}} dy +
 T \int_{|y| < \sup(\frac{5}{6}, 1 - \frac{|x|}{2T})} \frac{g(x + Ty) |y|^{2} }{(1 - |y|^{2})^{3/2}} dy\label{4.22}
\end{align}

\noindent As in $(\ref{4.7})$, since $g$ is radially symmetric, $|x| \leq 32T$, $g(s) s^{3/2} \in L^{2}([0, \infty))$, $1 - v^{2} \leq 1$, and  $(\ref{variable})$,

\begin{equation}\label{4.23}
|x|^{1/2} (\ref{4.21}) \leq \frac{T^{1/2}}{|x|} \int_{||x| - T \sup(\frac{5}{6}, 1 - \frac{|x|}{2T})|}^{|x| + T \sup(\frac{5}{6}, 1 - \frac{|x|}{2T})} g(s) s (1 - v^{2})^{1/2} ds \leq \mathcal M(g)(T).
\end{equation}

\noindent Also,

\begin{align}\label{4.24}
|x|^{1/2} (\ref{4.22}) \leq& \int_{0}^{\sup(\frac{5}{6}, 1 - \frac{|x|}{2T})} \frac{y_{1}^{3}}{(1 - y_{1}^{2})^{3/2}} \frac{1}{|x|^{1/2}} \int_{||x| - Ty_{1}|}^{|x| + Ty_{1}} g(s) s (1 - v^{2})^{1/2} ds dy_{1} \notag\\
\leq& \int_{0}^{\sup(\frac{5}{6}, 1 - \frac{|x|}{2T})} \frac{y_{1}^{3}}{(1 - y_{1}^{2})^{3/2}} \inf(\frac{|x|^{1/2}}{T^{1/2} y_{1}^{1/2}}, 1) \mathcal M(g)(Ty_{1}) dy_{1}.
\end{align}

\noindent Indeed, since $|x|^{1/2} (1 - v^{2}) \leq s^{1/2}$ and $g(s) s^{3/2} \in L^{2}(\mathbb{R})$, for any $y_{1}$,

\begin{align}
 \frac{y_{1}^{3}}{(1 - y_{1}^{2})^{3/2}} \frac{1}{|x|^{1/2}} \int_{||x| - Ty_{1}|}^{|x| + Ty_{1}} g(s) s (1 - v^{2})^{1/2} ds dy_{1}
\leq  \frac{y_{1}^{3}}{(1 - y_{1}^{2})^{3/2}} \mathcal M(g)(Ty_{1}) dy_{1}.
\end{align}

\noindent Next, when $T y_{1} >> |x|$, $(1 - v^{2}) \leq 1$ implies $|g(s) s (1 - v^{2})^{1/2}| \leq \frac{1}{T^{1/2} y_{1}^{1/2}} |g(s) s^{3/2}|$ for all $s \in [Ty_{1} - |x|, Ty_{1} + |x|]$, which implies

\begin{align}
 &\frac{y_{1}^{3}}{(1 - y_{1}^{2})^{3/2}} \frac{1}{|x|^{1/2}} \int_{||x| - Ty_{1}|}^{|x| + Ty_{1}} g(s) s (1 - v^{2})^{1/2} ds dy_{1} \\
&\leq  \frac{|x|^{1/2}}{T^{1/2} y_{1}^{1/2}} \frac{y_{1}^{3}}{(1 - y_{1}^{2})^{3/2}} \mathcal M(g)(Ty_{1}) dy_{1}.
\end{align}

\begin{equation}\label{4.25}
\Big\|\, |x|^{1/2} (\ref{4.22})\, \Big\|_{L_{T}^{2} L_{x}^{\infty}(\mathbb{R} \times \mathbb{R}^{4})} \leq \| g \|_{L^{2}(\mathbb{R}^{4})}.
\end{equation}

\noindent This completes the proof of the theorem when $v_{0} = 0$. Now suppose $v_{0} = f \in \dot{H}^{1}(\mathbb{R}^{4})$ is radial and $v_{1} = 0$. Then

\begin{equation}\label{4.26}
v(T,x) = 5T \int_{|y| < 1} \frac{y \cdot (\nabla f)(x + Ty)}{(1 - |y|^{2})^{1/2}} dy + T^{2} \int_{|y| < 1} \frac{y_{j} y_{k} (\partial_{j} \partial_{k} f)(x + Ty)}{(1 - |y|^{2})^{1/2}} dy
\end{equation}

\begin{equation}\label{4.27}
+ 3 \int_{|y| < 1} \frac{f(x + Ty)}{(1 - |y|^{2})^{1/2}} dy.
\end{equation}

\noindent Because $f \in \dot{H}^{1}(\mathbb{R}^{4})$, $(\ref{4.26})$ can be estimated in exactly the same manner as $(\ref{4.3})$. This leaves only $(\ref{4.27})$. Since $f$ is radial, making a change of variables,

\begin{equation}\label{4.28}
(\ref{4.27}) = \frac{3}{|x| T} \omega \int_{0}^{1} \frac{y_{1}^{2}}{(1 - y_{1}^{2})^{1/2}} \int_{||x| - Ty_{1}|}^{|x| + Ty_{1}} f(s) s (1 - v^{2})^{1/2} ds dy_{1},
\end{equation}

\noindent so for $|x| \leq 32 T$, since $|x|^{1/2} (1 - v^{2})^{1/2} \leq s^{1/2}$ and $s \sim T$, then by Hardy's inequality

\begin{equation}\label{4.29}
|x|^{1/2} (\ref{4.27}) \leq \int_{0}^{1} \frac{y_{1}^{2}}{(1 - y_{1}^{2})^{1/2}} \mathcal M(f(s) s^{1/2})(T y_{1}) dy_{1}.
\end{equation}

\noindent Then by Hardy's inequality, $f \in \dot{H}^{1}(\mathbb{R}^{4})$ implies $f(s) s^{1/2} \in L^{2}(\mathbb{R})$, so in this case

\begin{equation}\label{4.30}
\Bigg\| \,\sup_{|x| \leq 32T} (|x|^{1/2} v(T,x))\,\Bigg \|_{L_{T}^{2}(\mathbb{R})} \leq \| f \|_{\dot{H}^{1}(\mathbb{R}^{4})}.
\end{equation}

\noindent This completes the proof of the theorem. $\Box$

\subsection{Inhomogeneous wave equation estimate}

\begin{theorem}\label{t5}
Suppose that $v$ is a solution to the wave equation

\begin{equation}\label{5.1}
\left. \begin{array}{rcl}
\leftexp{4+1}{\square}\, v &=& F(v)\,\,\,\,\,\,\,\,\,\,\,\,\,\,\, \,\,\,\,\,\,\,\,\,\,\,\,\,\,\,\,\,\,\textnormal{on}\,\, \mathbb{R}^{4+1}\\
 v_0 = v (0, x) & \textnormal{and}& v_1 = \ptl_T v (0, x) \,\,\,\,\,\,\,\, \textnormal{on}\,\, \mathbb{R}^4\\\end{array} 
\right\}
\end{equation}

\noindent Then it follows that

\begin{align}\label{5.4}
&\Big\| \, v\,\Big\|_{L_{T}^{2} L_{x}^{8}(\mathbb{R} \times \mathbb{R}^{4})} + \Big\| |x|^{1/4} v \Big\|_{L_{T}^{2} L_{x}^{16}(\mathbb{R} \times \mathbb{R}^{4})} \notag \\ 
&\leq \| v_{0} \|_{\dot{H}^{1}(\mathbb{R}^{4})} + \| v_{1} \|_{L^{2}(\mathbb{R}^{4})} +\left (\sum_{j} 2^{j/2} \| F \|_{L_{T,x}^{2}(\mathbb{R} \times \{ x : 2^{j} \leq |x| \leq 2^{j + 1} \})}\right)
\end{align}
\end{theorem}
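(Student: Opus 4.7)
My plan is to reduce estimate $(5.4)$ to the combination of two building-block bounds that are already available (or nearly available) from earlier in the paper: the radial Strichartz bound of Theorem \ref{t4} and the Morawetz bound from $(5.3)$. The key observation is that both norms on the left-hand side of $(5.4)$ can be interpolated pointwise between the weighted $L^\infty$ norm $\||x|^{1/2}v\|_{L^2_TL^\infty_x}$ and the Morawetz norm $\||x|^{-3/2}v\|_{L^2_{T,x}}$.

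The first step is a purely algebraic interpolation. For each fixed $t$, the pointwise identities
\begin{equation*}
|v|^{8} = (|x|^{1/2}|v|)^{6}(|x|^{-3/2}|v|)^{2}, \qquad |x|^{4}|v|^{16} = (|x|^{1/2}|v|)^{14}(|x|^{-3/2}|v|)^{2}
\end{equation*}
together with H\"older in $x$ give
\begin{equation*}
\|v(t)\|_{L^{8}_{x}} \lesssim \||x|^{1/2}v(t)\|_{L^{\infty}_{x}}^{3/4}\, \||x|^{-3/2}v(t)\|_{L^{2}_{x}}^{1/4}
\end{equation*}
and the analogous bound for $\||x|^{1/4}v(t)\|_{L^{16}_{x}}$ with exponents $7/8$ and $1/8$. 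Integrating in time and applying H\"older with conjugate pairs $(4/3,4)$ and $(8/7,8)$, followed by Young's inequality, reduces $(5.4)$ to proving
\begin{equation*}
\||x|^{1/2}v\|_{L^{2}_{T}L^{\infty}_{x}} + \||x|^{-3/2}v\|_{L^{2}_{T,x}} \lesssim \|v_{0}\|_{\dot H^{1}} + \|v_{1}\|_{L^{2}} + \sum_{j} 2^{j/2}\|F\|_{L^{2}_{T,x}(|x|\sim 2^{j})}.
\end{equation*}

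The second step is immediate: $\||x|^{-3/2}v\|_{L^{2}_{T,x}}$ is the first term on the left of $(5.3)$ and is therefore already controlled by the right-hand side of $(5.4)$.

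The third and main step is to establish the inhomogeneous radial Strichartz bound for $\||x|^{1/2}v\|_{L^{2}_{T}L^{\infty}_{x}}$. Split $v = v_{h} + v_{i}$ into its homogeneous and Duhamel parts. For $v_{h}$, Theorem \ref{t4} directly gives $\||x|^{1/2}v_{h}\|_{L^{2}_{T}L^{\infty}_{x}} \lesssim \|v_{0}\|_{\dot H^{1}} + \|v_{1}\|_{L^{2}}$. For $v_{i}$, the plan is to apply a $TT^{*}$ duality: testing against an element of the predual of $L^{2}_{T}L^{\infty}_{x}$ with weight $|x|^{1/2}$ (namely $|x|^{-1/2}L^{2}_{T}L^{1}_{x}$) and converting the resulting pairing $\int \langle F(s), H(s)\rangle\, ds$ --- where $H$ solves the adjoint wave equation with this test function as source --- into the atomic norm of $F$ via Cauchy--Schwarz on the dyadic decomposition in $|x|$, using the Morawetz-type estimate $(5.3)$ applied to $H$ to bound the matching sup factor $\sup_{j}2^{-j/2}\|H\|_{L^{2}_{T,x}(|x|\sim 2^{j})}$.

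The main obstacle is precisely this inhomogeneous version of Theorem \ref{t4}. The delicate point is matching the exponents in the $TT^{*}$ pairing so that the adjoint propagator applied to an $|x|^{-1/2}L^{2}_{T}L^{1}_{x}$ datum lands in the dual of the atomic Morawetz norm; this requires a version of the Morawetz estimate with source in a weighted $L^{1}_{x}$ space rather than the atomic space appearing in $(5.3)$. One would then handle the retarded-versus-full-time integration by Christ--Kiselev. An alternative route would be to repeat the explicit fundamental-solution calculation from the proof of Theorem \ref{t4}, now for the Duhamel kernel $\int_{0}^{t}\frac{\sin((t-s)\sqrt{-\Delta})}{\sqrt{-\Delta}}F(s)\,ds$, and reduce matters to a one-dimensional Hardy--Littlewood maximal bound on the dyadic-in-$|x|$ profile of $F$, which matches the $2^{j/2}$ weighting in $\|F\|_{A}$.
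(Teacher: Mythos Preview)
Your interpolation in Steps 1--4 is correct and is a clean reduction: the pointwise identities and H\"older do give
\[
\|v\|_{L^2_TL^8_x}+\||x|^{1/4}v\|_{L^2_TL^{16}_x}\lesssim \||x|^{1/2}v\|_{L^2_TL^\infty_x}+\||x|^{-3/2}v\|_{L^2_{T,x}},
\]
and the second term is indeed handled by $(5.3)$. The paper does not argue this way; it works directly at the $L^8$ and $L^{16}$ level.

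The gap is in Step 5. Your primary route, $TT^*$ plus Christ--Kiselev, does not close: both the source norm $\sum_j 2^{j/2}\|F\|_{L^2_{T,x}(|x|\sim 2^j)}$ and the target norm $\||x|^{1/2}v\|_{L^2_TL^\infty_x}$ carry $L^2$ in time, and Christ--Kiselev requires a \emph{strict} inequality $p<q$ between the time exponents to pass from the full to the retarded integral. Moreover, the Morawetz bound the dual leg would require --- local energy control with source in $|x|^{1/2}L^2_TL^1_x$ --- is not $(5.3)$ (whose source lives in the dyadic $\ell^1$-atomic space), and there is no obvious embedding between the two. So the duality argument, as stated, does not reduce to anything already available.

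What the paper actually does for this step is your ``alternative route,'' but carried out in full and with a specific mechanism you have not identified: it localizes $F$ to a single shell $|x|\sim\rho$, uses the explicit $4{+}1$ fundamental solution to extract pointwise decay of the Duhamel piece away from the light cone (equations (5.16)--(5.20)), and then exploits an \emph{approximate Huygens principle} --- the contributions from time slabs $[l\rho,(l+1)\rho]$ are essentially supported on disjoint spacetime strips $\{(l-1)\rho\le T-|x|\le (l+4)\rho\}$ --- to obtain $\ell^2$-almost-orthogonality in $l$ (equations (5.21)--(5.24)). This orthogonality is exactly what replaces the missing Christ--Kiselev step at the $L^2_T$ endpoint. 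On each slab the homogeneous Theorems~\ref{t1} and~\ref{t4} are invoked, and the short-time piece is handled by H\"older. Your sketch of the alternative (``reduce to a one-dimensional maximal bound on the dyadic profile of $F$'') does not capture this light-cone disjointness, which is the crux.
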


\noindent \emph{Proof:}  It suffices to prove $(\ref{5.4})$ for $F \in L_{T,x}^{2}(\mathbb{R} \times \mathbb{R}^{4})$ when $F$ is supported on $\frac{\rho}{2} \leq |x| \leq \rho$ with bounds independent of $\rho$. By finite propagation speed, $\frac{\sin((T - \tau) \sqrt{-\Delta})}{\sqrt{-\Delta}} F(\tau)$ is supported on $|x| \leq \rho + |T - \tau|$.\vspace{5mm}

\noindent For $|x| < |T - \tau| - 3\rho$, observe that by the fundamental solution of the wave equation (see for example Sogge\cite{sogge_book})

\begin{align}\label{5.15}
\frac{\sin(T - \tau) \sqrt{-\Delta}}{\sqrt{-\Delta}} F(\tau) =& 3(T - \tau) \int_{|y| < 1} \frac{F(\tau, x + (T - \tau) y)}{(1 - |y|^{2})^{1/2}} dy \notag
\\ &\quad + (T - \tau)^{2} \int_{|y| < 1} \frac{\nabla F(\tau, x + (T - \tau) y) \cdot y}{(1 - |y|^{2})^{1/2}} dy.
\end{align}

\noindent If $|x| < |T - \tau| - 3\rho$ and $F(\tau, z)$ is supported on $|z| \leq \rho$, then for $|T- \tau| > 3 \rho$, $|y| < 1 - \frac{2\rho}{|T - \tau|}$. Then by H{\"o}lder's inequality and change of variables,

\begin{align}\label{5.16}
&\Bigg\| 3(T - \tau) \int_{|y| < 1} \frac{F(\tau, x + (T - \tau) y)}{(1 - |y|^{2})^{1/2}} dy \Bigg\|_{L_{x}^{\infty}(|x| < |T - \tau| - 3\rho)} \notag\\ 
&\leq\frac{\| F(\tau) \|_{L_{x}^{1}(\mathbb{R}^{4})}}{|T - \tau|^{5/2} \rho^{1/2}} \leq \frac{\rho^{3/2}}{|T - \tau|^{5/2}} \| F(\tau) \|_{L_{x}^{2}(\mathbb{R}^{4})}.
\end{align}

\noindent Also, integrating by parts,

\begin{align}\label{5.17}
&\Bigg\| (T - \tau)^{2} \int_{|y| < 1} \frac{\nabla F(\tau, x + (T - \tau) y) \cdot y}{(1 - |y|^{2})^{1/2}} dy \Bigg\|_{L_{x}^{\infty}(|x| < |T - \tau| - 3\rho)} \notag\\ 
&\leq \left(\frac{1}{\rho^{3/2} |T - \tau|^{3/2}} + \frac{1}{\rho^{1/2} |T - \tau|^{5/2}}\right) \| F(\tau) \|_{L_{x}^{1}(\mathbb{R}^{4})} \notag\\ 
&\leq \left(\frac{\rho^{1/2}}{|T - \tau|^{3/2}} + \frac{\rho^{3/2}}{|T - \tau|^{5/2}}\right) \| F(\tau) \|_{L_{x}^{2}(\mathbb{R}^{4})}.
\end{align}

\noindent Meanwhile, by the Sobolev embedding theorem and H{\"o}lder's inequality,

\begin{equation}\label{5.18}
\Bigg\Vert \frac{\sin((T - \tau) \sqrt{-\Delta})}{\sqrt{-\Delta}} F(\tau) \Bigg\Vert_{L_{x}^{2}(\mathbb{R}^{4})} \leq \| F(\tau) \|_{L_{x}^{4/3}(\mathbb{R}^{4})} \leq \rho \| F(\tau) \|_{L_{x}^{2}(\mathbb{R}^{4})}.
\end{equation}

\noindent Interpolating $(\ref{5.16})$, $(\ref{5.17})$, and $(\ref{5.18})$,

\begin{align}\label{5.19}
&\Bigg\| \frac{\sin((T - \tau) \sqrt{-\Delta})}{\sqrt{-\Delta}} F(\tau) \Bigg\|_{L_{x}^{8}(|x| \leq |T - \tau| - 3\rho)} \notag\\
\leq& \left(\frac{\rho^{5/8}}{|T - \tau|^{9/8}} + \frac{\rho^{11/8}}{|T - \tau|^{15/8}}\right) \Big\| F(\tau) \Big\|_{L_{x}^{2}(\mathbb{R}^{4})}.
\end{align}

\noindent \textbf{Remark:} If we were in odd dimensions the sharp Huygens principle would imply that $(\ref{5.19})$ is identically zero. However, since we are in even dimensions, $(\ref{5.19})$ is nonzero.\vspace{5mm}

\noindent Next, by finite propagation speed and interpolating $(\ref{5.16})$, $(\ref{5.17})$, and $(\ref{5.19})$,

\begin{align}\label{5.20}
&\Bigg\| |x|^{1/4} \frac{\sin((T - \tau) \sqrt{-\Delta})}{\sqrt{-\Delta}} F(\tau) \Bigg\|_{L_{x}^{16}(|x| \leq |T - \tau| - 3 \rho)}  \notag \\ 
\leq & \left(\frac{\rho^{9/16}}{|T- \tau|^{17/16}} + \frac{\rho^{25/16}}{|T - \tau|^{33/16}}\right) \Big\| F(\tau) \Big\|_{L_{x}^{2}(\mathbb{R}^{4})}.
\end{align}

\noindent Therefore, for any $l \in \mathbb{Z}$, $l \geq 0$, if $\tau \in [(l - 1) \rho, l \rho]$, $|t - \tau| > \rho$,

\begin{align}\label{5.21}
&\Bigg\| \frac{\sin((T - \tau) \sqrt{-\Delta})}{\sqrt{-\Delta}} F(\tau) \Bigg\|_{L_{x}^{8}(\mathbb{R}^{4})} +\Bigg \| |x|^{1/4} \frac{\sin((T - \tau) \sqrt{-\Delta})}{\sqrt{-\Delta}} F(\tau) \Bigg\|_{L_{x}^{16}(\mathbb{R} \times \mathbb{R}^{4})} \notag \\
&\leq \Bigg\| \frac{\sin((T - \tau) \sqrt{-\Delta})}{\sqrt{-\Delta}} F(\tau) \Bigg\|_{L_{x}^{8}\left((l - 1)\rho \leq T - |x| \leq (l + 4) \rho \right)} \notag\\
&\quad + \Bigg\| |x|^{1/4} \frac{\sin((T - \tau) \sqrt{-\Delta})}{\sqrt{-\Delta}} F(\tau) \Bigg\|_{L_{x}^{16}((l - 1)\rho \leq T - |x| \leq (l + 4) \rho)}  \notag \\ 
& \quad + \frac{\rho^{9/16}}{|T - \tau|^{17/16}} \Big\| \,F(\tau)\, \Big\|_{L_{x}^{2}(\mathbb{R}^{4})}.
\end{align}
\noindent Now we abuse notation and let $\lfloor T - \rho \rfloor = \lfloor \frac{T - \rho}{\rho} \rfloor \rho$, where $\lfloor x \rfloor$ is the integer part of $x$. Then because the sets $\{ (t, x) : (l - 1) \rho \leq t - |x| \leq (l + 4) \rho \}$ are pairwise disjoint for any two $l_{1}, l_{2} \in \mathbb{Z}$,

\begin{align}\label{5.23}
&\Bigg\| \int_{0}^{\lfloor T - \rho \rfloor} \frac{\sin((T - \tau) \sqrt{-\Delta})}{\sqrt{-\Delta}} F(\tau) d\tau \Bigg\|_{L_{T}^{2} L_{x}^{8}(\mathbb{R} \times \mathbb{R}^{4})} \notag\\ 
&\leq\Bigg \| \int_{0}^{T - \rho} \frac{\rho^{9/16}}{|T - \tau|^{17/16}}  \| F(\tau) \|_{L_{x}^{2}(\mathbb{R}^{4})} d\tau \Bigg\|_{L_{T}^{2}(\mathbb{R})} \notag\\
&\quad + \left(\sum_{l \in \mathbb{Z}} \Bigg\| \int_{l\rho}^{(l + 1)\rho} \frac{\sin((T - \tau) \sqrt{-\Delta})}{\sqrt{-\Delta}} F(\tau) d\tau \Bigg\|_{L_{T}^{2} L_{x}^{8}(\mathbb{R} \times \mathbb{R}^{4})}^{2}\right)^{1/2}.
\end{align}

\noindent Then by Theorem $\ref{t1}$, H{\"o}lder's inequality, and Young's inequality,

\begin{align}\label{5.24}
\leq & \rho^{1/2} \Big\|\, F\, \Big\|_{L_{T,x}^{2}(\mathbb{R} \times \mathbb{R}^{4})} + \left(\sum_{l \in \mathbb{Z}} \| F(\tau) \|_{L_{T}^{1} L_{x}^{2}([l \rho, (l + 1) \rho] \times \mathbb{R}^{4})}^{2}\right)^{1/2} \notag\\ \leq &\rho^{1/2} \Big\|\, F\, \Big\|_{L_{T,x}^{2}(\mathbb{R} \times \mathbb{R}^{4})}.
\end{align}

\noindent Finally, by Theorem $\ref{t1}$ and Young's inequality,

\begin{equation}\label{5.25}
\Bigg\| \int_{\lfloor T - \rho \rfloor}^{T} \frac{\sin((T - \tau) \sqrt{-\Delta})}{\sqrt{-\Delta}} F(\tau) d\tau \Bigg\|_{L_{T}^{2} L_{x}^{8}(\mathbb{R} \times \mathbb{R}^{4})} \leq \rho^{1/2} \| F \|_{L_{T,x}^{2}(\mathbb{R} \times \mathbb{R}^{4})}.
\end{equation}

\noindent Therefore, Theorem $\ref{t1}$, $(\ref{5.23})$, $(\ref{5.24})$, and $(\ref{5.25})$ combine to prove

\begin{equation}\label{5.26}
\| v \|_{L_{T}^{2} L_{x}^{8}(\mathbb{R} \times \mathbb{R}^{4})} \leq \| v_{0} \|_{\dot{H}^{1}(\mathbb{R}^{4})} + \| v_{1} \|_{L_{x}^{2}(\mathbb{R}^{4})} + \rho^{1/2} \| F \|_{L_{T,x}^{2}(\mathbb{R} \times \mathbb{R}^{4})}.
\end{equation}

\noindent Replacing Theorem $\ref{t1}$ with Theorem $\ref{t4}$ in $(\ref{5.23})$ - $(\ref{5.25})$, proves

\begin{equation}\label{2.25}
\Big\| |x|^{1/4} v \Big\|_{L_{T}^{2} L_{x}^{16}(\mathbb{R} \times \mathbb{R}^{4})} \leq \| v_{0} \|_{\dot{H}^{1}(\mathbb{R}^{4})} + \| v_{1} \|_{L_{x}^{2}(\mathbb{R}^{4})} + \rho^{1/2} \| F \|_{L_{T,x}^{2}(\mathbb{R} \times \mathbb{R}^{4})},
\end{equation}

\noindent and thus completes the proof of Theorem $\ref{t5}$. $\Box$

\subsection{A Function Space}
\noindent We will use the function space

\begin{definition}[Function spaces]\label{d6}
If $P_{N}$ is a Littlewood - Paley operator then let

\begin{align}\label{6.1}
\| v \|_{X}^{2} =& \sum_{N} \Big\| \,P_{N} v \,\Big\|_{L_{T}^{2} L_{x}^{8}(\mathbb{R} \times \mathbb{R}^{4})}^{2} + \sum_{N} \Big\| |x|^{1/4} P_{N} v \Big\|_{L_{t}^{2} L_{x}^{16}(\mathbb{R} \times \mathbb{R}^{4})}^{2} \notag\\
&+ \sum_{N} N^{2} \Big\| P_{N} v \Big\|_{L_{T}^{\infty} L_{x}^{2}(\mathbb{R} \times \mathbb{R}^{4})}^{2} \notag\\
&+ \sum_{N} \left(\sup_{\rho > 0} \rho^{-1/2} \Big\| P_{N} \ptl_T v \Big\|_{L_{T,x}^{2}(\mathbb{R} \times \{ x : |x| \leq \rho \})}\right)^{2}  \notag \\
&+ \sum_{N} \left(\sup_{\rho > 0} \rho^{-1/2} \Big\| P_{N} \nabla_x v \Big\|_{L_{T,x}^{2}(\mathbb{R} \times \{ x : |x| \leq \rho \})}\right)^{2} \notag\\
&+ \sum_{N} N^{2} \left(\sup_{\rho > 0} \rho^{-1/2} \Big\| P_{N} v \Big\|_{L_{T,x}^{2}(\mathbb{R} \times \{ x : |x| \leq \rho \})}\right)^{2}\notag \\
&+ \sum_{N} \Big\| |x|^{-3/2} P_{N} v \Big\|_{L_{T,x}^{2}(\mathbb{R} \times \mathbb{R}^{4})}^{2} 
+ \sum_{N} N^{-2} \Big\| P_{N} \ptl_T v \Big\|_{L_{T}^{2} L_{x}^{8}(\mathbb{R} \times \mathbb{R}^{4})}^{2}.
\end{align}

\noindent We also define the norm

\begin{align} \label{6.2}
\| F \|_{Y}^{2} = & \inf_{F_{1} + F_{2} = F} \Big\| F_{1} \Big\|_{L_{T}^{1} L_{x}^{2}(\mathbb{R} \times \mathbb{R}^{4})}^{2}  \notag \\
& \quad+ \sum_{N} \left(\sum_{j} 2^{j/2} \| P_{N} F_{2} \|_{L_{T,x}^{2}(\mathbb{R} \times \{ 2^{j} \leq |x| \leq 2^{j + 1} \})}\right)^{2}.
\end{align}
\end{definition}

\begin{lemma}\label{l7}

\begin{align}\label{7.2}
& \left(\sup_{\rho > 0} \rho^{-1/2} \Big\| \,\nabla v \,\Big\|_{L_{T,x}^{2}(\mathbb{R} \times \{ x : |x| \leq \rho \})}\right) + \left(\sup_{\rho > 0} \rho^{-1/2} \Big\|\, \ptl_T \, v\Big\|_{L_{T,x}^{2}(\mathbb{R} \times \{ x : |x| \leq \rho \})}\right) \notag \\ & \leq \| v \|_{X}.
\end{align}
\end{lemma}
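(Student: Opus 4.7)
\textbf{Proof plan for Lemma \ref{l7}.}
The plan is to bound the un-projected Morawetz quantity on the left-hand side by the $\ell^2$ sum of its Littlewood--Paley projected analogues, which are controlled directly by $\|v\|_X$ via its definition. A naive triangle inequality applied to $\nabla v = \sum_N P_N \nabla v$ only produces an $\ell^1$ sum of the LP Morawetz quantities and need not be finite; the remedy is to combine Plancherel with a smooth spatial cutoff.

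Fix $\rho > 0$ and pick $\chi_\rho \in C_c^\infty(\mathbb{R}^4)$ with $\chi_\rho \equiv 1$ on $\{|x| \leq \rho\}$, supported in $\{|x| \leq 2\rho\}$, and satisfying $|\nabla \chi_\rho| \lesssim \rho^{-1}$. By monotonicity of the $L^2$ norm and the near-orthogonality of the LP pieces,
\[
\|\nabla v\|_{L^2_{T,x}(\mathbb{R}\times\{|x|\leq\rho\})}^2 \leq \|\chi_\rho \nabla v\|_{L^2_{T,x}}^2 \sim \sum_N \|P_N(\chi_\rho \nabla v)\|_{L^2_{T,x}}^2.
\]
I would then expand $P_N(\chi_\rho \nabla v) = \chi_\rho P_N \nabla v + [P_N, \chi_\rho] \nabla v$. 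The principal piece $\chi_\rho P_N \nabla v$ is supported in $\{|x| \leq 2\rho\}$, so summing in $N$ and dividing by $\rho$ yields
\[
\rho^{-1}\sum_N \|\chi_\rho P_N \nabla v\|_{L^2_{T,x}}^2 \leq 2 \sum_N \Big(\sup_{\rho' > 0} (\rho')^{-1/2} \|P_N \nabla v\|_{L^2_{T,x}(|x|\leq\rho')}\Big)^2 \leq 2 \|v\|_X^2
\]
by the definition of $\|v\|_X$. For the commutator, the kernel of $[P_N, \chi_\rho]$ inherits a factor of $\rho^{-1}|x - y|$ from the Lipschitz bound on $\chi_\rho$ while being localized to the scale $|x - y| \lesssim N^{-1}$ by the convolution kernel of $P_N$, giving $\|[P_N, \chi_\rho]\|_{L^2 \to L^2} \lesssim (N\rho)^{-1}$. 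Further LP-decomposing the input $\nabla v = \sum_M P_M \nabla v$, the rapid off-diagonal decay of the LP kernel confines the essential contribution to $M \sim N$, and the resulting error reabsorbs into the principal term via a geometric series argument in $|\log(N/M)|$.

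Taking the supremum in $\rho > 0$ produces the claimed bound on the $\nabla v$ term, and the identical argument applied to $\ptl_T v$ handles the other summand. The main obstacle is the bookkeeping of the commutator term in the low relative frequency regime $N \lesssim \rho^{-1}$, where one must use the $(N\rho)^{-1}$ smallness together with the rapid LP kernel decay to ensure that the error is dominated by the principal Morawetz contributions already encoded in $\|v\|_X$.
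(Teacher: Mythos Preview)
Your overall strategy---insert a smooth cutoff, invoke Plancherel, and commute $P_N$ past the cutoff---is exactly the paper's approach, and your treatment of the principal term $\chi_\rho P_N\nabla v$ is correct. The gap is in the commutator term in the regime $N\lesssim\rho^{-1}$. There $(N\rho)^{-1}\geq 1$, so the commutator operator bound gives nothing; and the ``rapid off-diagonal decay'' you invoke does \emph{not} confine the interaction to $M\sim N$, because $\chi_\rho$ carries all frequencies up to $\sim\rho^{-1}$, so for $N\lesssim\rho^{-1}$ the product $\chi_\rho P_M\nabla v$ can land at frequency $N$ for every $M\lesssim\rho^{-1}$. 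Trying to close using only the Morawetz pieces of $\|v\|_X$ then becomes circular: one ends up bounding $\sum_{N\lesssim\rho^{-1}}\|P_N(\chi_\rho\nabla v)\|_{L^2}^2$ by $\|\chi_\rho\nabla v\|_{L^2}^2$ again.

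The paper closes this with the Strichartz component of $\|v\|_X$, not the Morawetz one. It writes the error at each $N$ as $(P_{>N/8}\phi(\cdot/\rho))\,\nabla P_N v$, bounds $\|P_{>N/8}\phi(\cdot/\rho)\|_{L^{8/3}_x}$ by Bernstein (gaining $\min(N^{-2}\rho^{-2},1)$-type decay), and pairs this via H\"older with $\|\nabla P_N v\|_{L^2_T L^8_x}\lesssim N\|P_N v\|_{L^2_T L^8_x}$, which is part of the $X$-norm. The resulting weighted $\ell^1$ sum over $N$ is then controlled by Cauchy--Schwarz against the $\ell^2$ Strichartz sum in $\|v\|_X$. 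So your framework is right, but to make the commutator estimate go through you must feed in the $L^2_TL^8_x$ piece of $X$ rather than hoping for smallness from $(N\rho)^{-1}$ in the low-frequency range.
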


\noindent \emph{Proof:} Fix $\rho > 0$. Letting

\begin{equation}\label{7.2.1}
\tilde{P}_{N} = P_{\frac{N}{2}} + P_{N} + P_{2N},
\end{equation}

\begin{equation}\label{7.3}
\phi \left(\frac{x}{\rho}\right) \nabla P_{N} v = \tilde{P}_{N} \left( \phi\left(\frac{x}{\rho}\right) \nabla P_{N} v\right) + \left(P_{> \frac{N}{8}} \phi\left(\frac{x}{\rho}\right)\right) \nabla P_{N} v.
\end{equation}

\noindent By Bernstein's inequality,

\begin{equation}\label{7.4}
\Big\| P_{> \frac{N}{8}} \phi\left(\frac{x}{\rho}\right)\Big\|_{L_{T}^{\infty} L_{x}^{8/3}(\mathbb{R} \times \mathbb{R}^{4})} \leq \inf(N^{-2} \rho^{-2}, 1),
\end{equation}

\noindent so

\begin{align}\label{7.5}
&\rho^{-1/2} \Bigg\| \sum_{N} \left(P_{> \frac{N}{8}} \phi\left(\frac{x}{\rho}\right)\right) \nabla P_{N} v\Bigg \|_{L_{T,x}^{2}(\mathbb{R} \times \mathbb{R}^{4})} \notag\\ 
&\leq \rho^{-1/2} \sum_{N} \inf \left(N^{-2} \rho^{-2}, 1\right) N \Big\| P_{N} v \Big\|_{L_{T}^{2} L_{x}^{8}(\mathbb{R} \times \mathbb{R}^{4})} \leq \rho^{-3/2} \| v \|_{X}.
\end{align}

\noindent Meanwhile, since the $\tilde{P}_{N}$ are finitely overlapping,

\begin{align}\label{7.6}
\rho^{-1} \Bigg\| \sum_{N} \tilde{P}_{N} \left(\phi \left(\frac{x}{\rho}\right) \nabla P_{N} v \right) \Bigg\|_{L_{T,x}^{2}(\mathbb{R} \times \mathbb{R}^{4})}^{2} \leq& \rho^{-1} \sum_{N} \Bigg\| \phi\left(\frac{x}{\rho}\right) \nabla P_{N} v \Bigg\|_{L_{T,x}^{2}(\mathbb{R} \times \mathbb{R}^{4})}^{2} \notag\\ 
\leq & \| v \|_{X}^{2},
\end{align}

\noindent which proves

\begin{equation}\label{7.7}
\left(\sup_{\rho > 0} \rho^{-1/2} \Big\| \, \nabla v \,\Big\|_{L_{T,x}^{2}(\mathbb{R} \times \{ x : |x| \leq \rho \})}\right) \leq \| v \|_{X}.
\end{equation}

\noindent The proof of 

\begin{equation}\label{7.8}
\left(\sup_{\rho > 0} \rho^{-1/2} \Big\| \ptl_T v \Big\|_{L_{T,x}^{2}(\mathbb{R} \times \{ x : |x| \leq \rho \})}\right) \leq \| v \|_{X}.
\end{equation}

\noindent is similar. $\Box$

\begin{lemma}
\begin{equation}\label{7.1}
\Big\| |x|^{1/4} v\Big\|_{L_{T}^{2} L_{x}^{16}(\mathbb{R} \times \mathbb{R}^{4})} \leq \| v \|_{X},
\end{equation}
\end{lemma}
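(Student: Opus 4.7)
The plan is to reduce the desired mixed-norm estimate to the frequency-localized square-function pieces that already appear in the definition of the $X$ norm. Writing $v = \sum_N P_N v$, one formally has $|x|^{1/4}v = \sum_N |x|^{1/4} P_N v$. The target bound would follow immediately from two standard facts: the vector-valued Littlewood--Paley inequality on $L_x^{16}(\mathbb{R}^4)$ (valid since $1<16<\infty$) and Minkowski's inequality in the form
\begin{equation}
\Big\| \Big( \sum_N |g_N|^2 \Big)^{1/2} \Big\|_{L_x^{16}}^2 \leq \sum_N \| g_N \|_{L_x^{16}}^2,
\end{equation}
which holds because $16 \geq 2$. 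Applying these with $g_N = |x|^{1/4} P_N v$, squaring, integrating in $T$, and swapping sum and integral gives
\begin{equation}
\big\| |x|^{1/4} v \big\|_{L_T^2 L_x^{16}}^2 \lesssim \sum_N \big\| |x|^{1/4} P_N v \big\|_{L_T^2 L_x^{16}}^2 \leq \|v\|_X^2,
\end{equation}
which is the claim.

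The main obstacle is that the square-function inequality requires the summands to be (approximately) frequency localized, whereas $|x|^{1/4} P_N v$ is not, because multiplication by the polynomial weight $|x|^{1/4}$ does not preserve spectral support. To handle this I would use exactly the commutator trick employed in the proof of Lemma~7 above, decomposing
\begin{equation}
|x|^{1/4} P_N v = \widetilde{P}_N\big( |x|^{1/4} P_N v \big) + \big( P_{>N/8}\, |x|^{1/4} \big)\, P_N v,
\end{equation}
with $\widetilde{P}_N$ a fattened projection as in (7.2.1). The first summand is frequency localized near $N$, so the genuine Littlewood--Paley square-function inequality applies to $\sum_N \widetilde{P}_N(|x|^{1/4} P_N v)$ and yields the desired $\ell^2_N$ bound. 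The remainder is treated as in (7.3)--(7.5) of Lemma~7: Bernstein estimates for $P_{>N/8}$ acting on $|x|^{1/4}$ (applied, if needed, after a suitable local/dyadic decomposition in physical space to accommodate the growth of the weight) gain powers of $N$, so the resulting contribution is summable in $N$ and is controlled by the other components of $\|v\|_X$ (in particular the $|x|^{-3/2}P_N v$ term and the $L_T^2 L_x^8$ term). This commutator step, rather than the soft square-function/Minkowski chain, is the real technical content of the lemma.
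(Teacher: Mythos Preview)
Your route is genuinely different from the paper's, and in fact your opening paragraph can be made into a complete and much shorter proof than the paper gives---but not for the reason you state, and your proposed workaround does not close the gap.

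The inequality you want in the first paragraph,
\[
\big\| |x|^{1/4} v \big\|_{L_x^{16}}^2 \;\lesssim\; \sum_N \big\| |x|^{1/4} P_N v \big\|_{L_x^{16}}^2,
\]
is exactly the Littlewood--Paley square-function estimate in the \emph{weighted} space $L^{16}(|x|^{4}\,dx)$ followed by Minkowski. Since $|x|^{4}\in A_{16}(\mathbb{R}^{4})$ (the power-weight criterion $-n<\alpha<n(p-1)$ gives $-4<4<60$), the weighted square-function inequality applies directly, and the lemma follows in one line from the $\sum_N\||x|^{1/4}P_Nv\|_{L_T^2L_x^{16}}^2$ term of $\|v\|_X^2$. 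You should simply invoke this instead of the unweighted inequality.

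Your commutator fix, by contrast, does not work as written. In Lemma~\ref{l7} the weight $\phi(x/\rho)$ is Schwartz, so $\|P_{>N/8}\phi(x/\rho)\|_{L^\infty}\lesssim (N\rho)^{-M}$ for every $M$, and this arbitrary decay is what makes the remainder in $(\ref{7.5})$ summable in $\ell^1_N$ and hence controllable by the $\ell^2_N$-type $X$ norm. For the homogeneous weight one only has $\|P_{>N/8}|x|^{1/4}\|_{L^\infty}\sim N^{-1/4}$; combining with Bernstein $L^8\to L^{16}$ on $P_Nv$ this gain is exactly cancelled, leaving $\sum_N\|P_Nv\|_{L_T^2L_x^8}$ with no decay, which is not bounded by $\|v\|_X$. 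The ``if needed, physical-space decomposition'' you allude to is therefore not optional but essential, and carrying it out carefully is real work.

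The paper takes the opposite organizing principle: it applies $P_N$ to the full product $|x|^{1/4}v$ (so frequency localization is automatic and no commutator is needed), and then estimates each $P_N(|x|^{1/4}v)$ by splitting $v=P_{\le N}v+\tilde P_Nv+P_{>N}v$ together with a physical cutoff $\phi(Nx)$ versus $\sum_{j\ge0}\chi(2^{-j}Nx)$. Five pieces are handled separately using H\"older, Bernstein, and Sobolev, calling on the $L_T^2L_x^8$, the $\||x|^{1/4}P_Mv\|_{L_T^2L_x^{16}}$, and the local-$L^2$ components of $\|v\|_X$, with off-diagonal gains $(M/N)^{1/2}$ or $(N/M)$ that are then summed by Young's inequality. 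Your single-line weighted-$A_p$ argument buys brevity; the paper's decomposition is more hands-on but avoids appealing to weighted Calder\'on--Zygmund theory.
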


\noindent \emph{Proof:} By the Littlewood - Paley theorem

\begin{equation}\label{est0}
\Big\| |x|^{1/4} v \Big\|_{L_{T}^{2} L_{x}^{16}(\mathbb{R} \times \mathbb{R}^{4})}^{2} \leq \sum_{N} \Big\| P_{N}(|x|^{1/4} v) \Big\|_{L_{T}^{2} L_{x}^{16}(\mathbb{R} \times \mathbb{R}^{4})}^{2}.
\end{equation}

\noindent By H{\"o}lder's inequality

\begin{align}\label{est1}
\Big\| |x|^{1/4} \phi(Nx) (P_{\leq N} v) \Big\|_{L_{T}^{2} L_{x}^{16}} \leq & N^{-1/2} \sum_{M \leq N} \big\| P_{M} v \big\|_{L_{T}^{2} L_{x}^{\infty}}  \notag \\ \leq& \left(\frac{M}{N}\right)^{1/2} \sum_{M \leq N} \big\| P_{M} v \big\|_{L_{T}^{2} L_{x}^{8}}.
\end{align}

\noindent By the Sobolev embedding theorem $\dot{H}^{7/4}(\mathbb{R}^{4}) \subset L^{16}(\mathbb{R}^{4})$,

\begin{equation}\label{est2}
\aligned
\Big\| P_{N}(|x|^{1/4} \phi(Nx) (P_{> N} v)) \Big\|_{L_{T}^{2} L_{x}^{16}} \leq N^{3/2} \sum_{M \geq N} \big\| P_{M} v \big\|_{L_{T}^{2} L_{x}^{2}(|x| \leq \frac{2}{N})} \\
\leq \sum_{M \geq N} \left(\frac{N}{M}\right) M \left(\sup_{\rho > 0} \rho^{-1/2} \big\| P_{M} v \big\|_{L_{T}^{2} L_{x}^{2}(|x| \leq \rho)}\right).
\endaligned
\end{equation}

\noindent Next,

\begin{equation}\label{est3}
\left\| |x|^{1/4} (1 - \phi(Nx)) (\tilde{P}_{N} v) \right\|_{L_{T}^{2} L_{x}^{16}} \leq \sum_{\frac{N}{32} \leq M \leq 32 N} \left\| |x|^{1/4} P_{M} v \right\|_{L_{T}^{2} L_{x}^{16}}.
\end{equation}

\noindent Next, by Bernstein's inequality, letting $\chi(x) = \phi(\frac{x}{2}) - \phi(x)$,

\begin{equation}\label{est4}
\aligned
\sum_{j \geq 0} \left\| P_{N} \left( |x|^{1/4} \chi(2^{-j} N x) (P_{> 32N} v) \right) \right\|_{L_{T}^{2} L_{x}^{16}} \\ \leq \sum_{j \geq 0} N^{3/4} \left\| \nabla \left(|x|^{1/4} \chi(2^{-j} Nx) \right) \right\|_{L_{T}^{\infty} L_{x}^{\infty}} \left\| P_{> 32N} v \right\|_{L_{T}^{2} L_{x}^{2}(|x| \leq \frac{2^{j}}{N})} \\ \leq \sum_{j \geq 0} 2^{-3j/4} N^{3/2} \sum_{M \geq 32N} \frac{2^{j/2}}{M N^{1/2}} \cdot \sup_{\rho > 0} \left( \rho^{-1/2} M \| P_{M} v \|_{L_{T}^{2} L_{x}^{2}(|x| \leq \rho)} \right) \\ \leq \sum_{M \geq 32N} \left(\frac{N}{M}\right) \cdot \sup_{\rho > 0} \left( \rho^{-1/2} M \| P_{M} v \|_{L_{T}^{2} L_{x}^{2}(|x| \leq \rho)} \right).
\endaligned
\end{equation}

\noindent Finally, by H\"older's inequality and Sobolev embedding,

\begin{equation}\label{est5}
\aligned
\sum_{j \geq 0} \left\| P_{N} \left( |x|^{1/4} \chi(2^{-j} N x) (P_{\leq \frac{N}{32}} v) \right) \right\|_{L_{T}^{2} L_{x}^{16}} \\ \leq \sum_{j \geq 0} \frac{1}{N} \left\| \nabla \left(|x|^{1/4} \chi(2^{-j} Nx) \right) \right\|_{L_{T}^{\infty} L_{x}^{\infty}} \left\| P_{\leq \frac{N}{32}} v \right\|_{L_{T}^{2} L_{x}^{16}(|x| \leq \frac{2^{j}}{N})} \\ \leq \sum_{j \geq 0} 2^{-3j/4} N^{-1/4} \sum_{M \leq \frac{N}{32}} \frac{2^{j/4}}{N^{1/4}} \left\| P_{M} v \right\|_{L_{T}^{2} L_{x}^{\infty}} \leq \sum_{M \leq \frac{N}{32}} \left(\frac{M}{N}\right)^{1/2} \left\| P_{M} v \right\|_{L_{T}^{2} L_{x}^{8}}.
\endaligned
\end{equation}

\noindent Combining $(\ref{est1})$ - $(\ref{est5})$, by Young's inequality and $(\ref{6.1})$,

\begin{equation}
\sum_{N} \left\| P_{N}(|x|^{1/4} v) \right\|_{L_{T}^{2} L_{x}^{16}(\mathbb{R} \times \mathbb{R}^{4})}^{2} \leq \| v \|_{X}^{2}.
\end{equation}

\noindent $\Box$

\begin{theorem}\label{t8}
If $v$ is a radial solution to the equation
\begin{equation}\label{8.1}
\left. \begin{array}{rcl}
\leftexp{4+1}{\square}\, v &=& F(v)\,\,\,\,\,\,\,\,\,\,\,\,\,\,\, \,\,\,\,\,\,\,\,\,\,\,\,\,\,\,\,\,\,\textnormal{on}\,\, \mathbb{R}^{4+1}\\
 v_0 = v (0, x) & \textnormal{and}& v_1 = \ptl_T v (0, x) \,\,\,\,\,\,\,\, \textnormal{on}\,\, \mathbb{R}^4\\\end{array} 
\right\}
\end{equation}

\noindent then

\begin{equation}\label{8.2}
\| v \|_{X} \leq \| v_{0} \|_{\dot{H}^{1}(\mathbb{R}^{4})} + \| v_{1} \|_{L^{2}(\mathbb{R}^{4})} + \| F \|_{Y}.
\end{equation}
\end{theorem}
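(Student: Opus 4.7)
The proof reduces the theorem to frequency-localized estimates via Littlewood--Paley decomposition combined with a near-optimal splitting in the definition of the $Y$ norm. Given $\varepsilon > 0$, I would fix a decomposition $F = F_1 + F_2$ realizing the infimum up to $\varepsilon$, so that
\begin{align*}
\| F_1 \|_{L_T^1 L_x^2}^2 + \sum_N \Bigl( \sum_j 2^{j/2} \| P_N F_2 \|_{L_{T,x}^2(\mathbb{R} \times \{ 2^j \leq |x| \leq 2^{j+1}\})} \Bigr)^2 \leq \| F \|_Y^2 + \varepsilon.
\end{align*}
Applying $P_N$ to \eqref{8.1} gives $\leftexp{4+1}{\square}\, P_N v = P_N F_1 + P_N F_2$ with data $(P_N v_0, P_N v_1)$. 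Since $P_N$ preserves radial symmetry and commutes with $\leftexp{4+1}{\square}$, each term appearing in $\| v \|_X^2$ reduces to a single-frequency estimate that can be squared and summed in $N$.

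For the $P_N F_1$ contribution I would invoke the endpoint Strichartz estimate of Theorem \ref{t1} for the $L_T^2 L_x^8$ norm, the radial Strichartz estimate of Theorem \ref{t4} for the weighted $L_T^2 L_x^{16}$ norm, the standard energy identity for the $N^2 \| P_N v \|_{L_T^\infty L_x^2}^2$ term, and the Morawetz bounds of Lemmas \ref{firstmora} and \ref{t3} applied to the Duhamel integral of $P_N F_1$ (paired against $L_T^1 L_x^2$ by duality). For the $P_N F_2$ contribution I would apply Theorem \ref{t5} directly, which delivers both the $L_T^2 L_x^8$ and the weighted $L_T^2 L_x^{16}$ bounds with exactly the $\sum_j 2^{j/2}$ dyadic-annulus norm on the right-hand side. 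The remaining pieces in $\| v \|_X^2$ --- namely $\| |x|^{-3/2} P_N v \|_{L_{T,x}^2}$ and the two Morawetz-type suprema in $\rho$ for $P_N \ptl_T v$ and $P_N \nabla_x v$ --- are precisely the content of the inhomogeneous estimate \eqref{5.3} applied to $P_N v$. The term $N^2 \sup_{\rho}\rho^{-1}\|P_N v\|_{L^2_{T,x}(|x|\leq \rho)}^2$ is obtained by trading a factor of $N$ for a spatial derivative via Bernstein on the frequency-$N$ block and invoking the corresponding Morawetz bound for $P_N \nabla_x v$.

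The time-derivative term $N^{-2}\|P_N \ptl_T v\|_{L^2_T L^8_x}^2$ is handled by noting that $\ptl_T P_N v$ satisfies the same sort of first-order semigroup representation and, on a frequency-$N$ block, is comparable in size to $N$ times the spatial solution; an application of Theorem \ref{t5} to the pair $(P_N v, N^{-1} P_N \ptl_T v)$ in a first-order formulation absorbs this $N$-factor into the prefactor $N^{-2}$. The square-summation in $N$ then closes: the $F_1$ part sums by Littlewood--Paley orthogonality and the $L^1_T L^2_x$-$L^\infty_T L^2_x$ duality, while the $F_2$ part sums automatically because the definition of $\|F\|_Y$ already places $\sum_N$ outside the dyadic-annulus sum $\sum_j$. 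Letting $\varepsilon \to 0$ yields $\| v \|_X \leq \| v_0 \|_{\dot H^1} + \| v_1 \|_{L^2} + \| F \|_Y$.

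The main obstacle I anticipate is keeping the $\ptl_T v$ estimates consistent with the spatial ones at each frequency, since a priori the forcing of the wave equation for $\ptl_T v$ involves $\ptl_T F$, which is not controlled in the $Y$ norm. The cleanest resolution is to work with the first-order system in $(P_N v, N^{-1} P_N \ptl_T v)$ and with the $\frac{\sin(T\sqrt{-\Delta})}{\sqrt{-\Delta}}$ and $\cos(T\sqrt{-\Delta})$ Duhamel formulas directly, so that the $N^{\pm 1}$ weights distribute in the expected way without requiring any estimate on $\ptl_T F$.
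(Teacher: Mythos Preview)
Your outline is essentially the same strategy as the paper and is correct at the structural level, but two steps are glossed over in a way that hides the only real work in the proof.

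First, the Bernstein step for the $N^{2}\sup_{\rho}\rho^{-1}\|P_{N}v\|_{L^{2}_{T,x}(|x|\leq\rho)}^{2}$ term is too quick. Bernstein's inequality $N\|P_{N}v\|_{L^{2}}\lesssim\|\nabla P_{N}v\|_{L^{2}}$ is a global estimate on $\mathbb{R}^{4}$; it does not directly apply to $L^{2}(\{|x|\leq\rho\})$ because the spatial cutoff $\phi(x/\rho)$ does not commute with $P_{N}$. The paper deals with this by writing $\phi(x/\rho)P_{N}v = P_{N}\bigl(\phi(x/\rho)\tilde P_{N}v\bigr) + [\phi(x/\rho),P_{N}]\tilde P_{N}v$ and estimating the commutator via the rapid decay of the Littlewood--Paley kernel together with H\"older; the commutator term is absorbed into the already-controlled $\|\tilde P_{N}v\|_{L^{2}_{T}L^{8}_{x}}$ piece (and the case $\rho\leq 1/N$ is handled separately by straight H\"older). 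Without this commutator argument your reduction does not close.

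Second, for the $N^{-2}\|P_{N}\partial_{T}v\|_{L^{2}_{T}L^{8}_{x}}^{2}$ term, your first-order Duhamel idea is exactly what the paper does, but the content is not just ``absorbing the $N$ into $N^{-2}$.'' The Duhamel term for $\partial_{T}v$ involves $\cos((T-\tau)\sqrt{-\Delta})\,P_{N}F$, which on frequency $N$ behaves like $N\cdot\frac{\sin((T-\tau)\sqrt{-\Delta})}{\sqrt{-\Delta}}P_{N}F$, so one is forced to control $\|\nabla P_{N}F\|$ in the norm $Y+L^{2}_{T}L^{8/7}_{x}$. The paper shows this is bounded by $N$ times the dyadic-annulus $Y$-piece of $\tilde P_{N}F$ by splitting with $\phi(Nx)$: the near-origin part goes into $L^{2}_{T}L^{8/7}_{x}$ by H\"older, and the far part stays in $Y$ using again the rapid decay of the kernel. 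Your proposal would need this same argument (or a direct re-proof of Theorem~\ref{t5} for the $\cos$ propagator) to be complete.
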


\noindent \emph{Proof:} By Theorem $\ref{t5}$, it only remains to prove

\begin{equation}\label{8.3}
\sum_{N} N^{2} \sup_{\rho > 0} \rho^{-1/2} \Big\| P_{N} v \Big\|_{L_{T,x}^{2}(\mathbb{R} \times \mathbb{R}^{4})}^{2} \leq  \| v_{0} \|_{\dot{H}^{1}(\mathbb{R}^{4})} + \| v_{1} \|_{L^{2}(\mathbb{R}^{4})} + \| F \|_{Y}.
\end{equation}

\noindent and

\begin{equation}\label{8.3.1}
\sum_{N} N^{-2} \Big\| P_{N} \ptl_T v \Big\|_{L_{T}^{2} L_{x}^{8}(\mathbb{R} \times \mathbb{R}^{4})}^{2} \leq  \| v_{0} \|_{\dot{H}^{1}(\mathbb{R}^{4})} + \| v_{1} \|_{L^{2}(\mathbb{R}^{4})} + \| F \|_{Y}.
\end{equation}

\noindent We start with $(\ref{8.3.1})$. Fix $N$. Suppose $\phi \in C_{0}^{\infty}(\mathbb{R}^{4})$ is a positive radial function, $\phi(x) = 1$ for $|x| \leq 1$, $\phi(x) = 0$ for $|x| > 2$. If $\rho \geq \frac{1}{N}$, we take the commutator

\begin{align}\label{8.4}
\rho^{-1/2} N \phi\left(\frac{x}{\rho}\right) P_{N} v =& \rho^{-1/2} N \phi\left(\frac{x}{\rho}\right) P_{N} \tilde{P}_{N} v \notag\\
=& \rho^{-1/2} N P_{N} \left(\phi\left(\frac{x}{\rho}\right) \tilde{P}_{N} v\right) + \rho^{-1/2} N \left[\phi\left(\frac{x}{\rho}\right), P_{N}\right] \tilde{P}_{N} v.
\end{align}

\noindent Then by Bernstein's inequality

\begin{align}\label{8.5}
&\rho^{-1/2} \Big\| N P_{N} \left(\phi\left(\frac{x}{\rho}\right) \tilde{P}_{N} v \right) \Big\|_{L_{T,x}^{2}(\mathbb{R} \times \mathbb{R}^{4})} \notag\\ 
\leq & \,\rho^{-1/2} \Big\| \nabla \left( \phi\left(\frac{x}{\rho}\right) \tilde{P}_{N} v \right) \Big\|_{L_{T,x}^{2}(\mathbb{R} \times \mathbb{R}^{4})} \notag\\
\leq &\, \rho^{-3/2} \Big\| \phi' \left(\frac{x}{\rho}\right) \tilde{P}_{N} v \Big\|_{L_{T,x}^{2}(\mathbb{R} \times \mathbb{R}^{4})} + \rho^{-1/2} \Big\| \phi \left(\frac{x}{\rho}\right) \nabla \tilde{P}_{N} v \Big\|_{L_{T,x}^{2}(\mathbb{R} \times \mathbb{R}^{4})} \notag\\
\leq& \, \Big\| \tilde{P}_{N} v \Big\|_{L_{T}^{2} L_{x}^{8}(\mathbb{R} \times \mathbb{R}^{4})} + \rho^{-1/2} \Big\| \phi\left(\frac{x}{\rho}\right) \nabla \tilde{P}_{N} v \Big\|_{L_{T,x}^{2}(\mathbb{R} \times \mathbb{R}^{4})}.
\end{align}

\noindent Now compute the commutator

\begin{equation}\label{8.5.1}
\rho^{-1/2} N[P_{N}, \phi(\frac{x}{\rho})] \tilde{P}_{N} v = \rho^{-1/2} N \int N^{4} K(N(x - y)) [\phi(\frac{x}{\rho}) - \phi(\frac{y}{\rho})] (\tilde{P}_{N} v)(y) dy,
\end{equation}

\noindent where $K(\cdot)$ is the kernel of the Littlewood - Paley projection $P_{1}$. By the fundamental theorem of calculus, $|\phi(\frac{x}{\rho}) - \phi(\frac{y}{\rho})| \leq \frac{|x - y|}{\rho}$, so by H{\"o}lder's inequality, because $K(\cdot)$ is rapidly decreasing for $|x| \geq 1$,

\begin{equation}\label{8.5.2}
\| \rho^{-1/2} N \int N^{4} K(N(x - y)) [\phi(\frac{x}{\rho}) - \phi(\frac{y}{\rho})] (\tilde{P}_{N} v)(y) dy \|_{L_{x}^{2}(|x| \leq 10 \rho)} \leq \| \tilde{P}_{N} v \|_{L_{x}^{8}}.
\end{equation}

\noindent When $|x| \geq 10 \rho$, by the support of $\phi$ and the fact that $K(\cdot)$ is rapidly decreasing for $|x| \geq 1$,

\begin{equation}\label{8.5.3}
\aligned
\rho^{-1/2} N \int N^{4} K(N(x - y)) [\phi(\frac{x}{\rho}) - \phi(\frac{y}{\rho})] (\tilde{P}_{N} v)(y) dy \\= -\rho^{-1/2} N \int N^{4} K(N(x - y)) \phi(\frac{y}{\rho}) (\tilde{P}_{N} v)(y) dy \\
\leq \rho^{-1/2} \frac{N}{N^{10} |x|^{10}} \int N^{4} K(N(x - y)) \phi(\frac{y}{\rho}) (\tilde{P}_{N} v)(y) dy.
\endaligned
\end{equation}

\noindent Therefore, if $\rho \geq \frac{1}{N}$,

\begin{equation}\label{8.6}
N \rho^{-1/2} \Bigg\| \left[\phi\left(\frac{x}{\rho}\right), P_{N}\right] \tilde{P}_{N} v \Bigg\|_{L_{T,x}^{2}(\mathbb{R} \times \mathbb{R}^{4})} \leq \Big\| \tilde{P}_{N} v \Big\|_{L_{T}^{2} L_{x}^{8}(\mathbb{R} \times \mathbb{R}^{4})}.
\end{equation}

\noindent On the other hand, if $\rho \leq \frac{1}{N}$, then simply apply Holder's inequality,

\begin{equation}\label{8.7}
\rho^{-1/2} N \Bigg\| \phi\left(\frac{x}{\rho}\right) P_{N} v \Bigg\|_{L_{T,x}^{2}(\mathbb{R} \times \mathbb{R}^{4})} \leq \Big\| \tilde{P}_{N} v \Big\|_{L_{T}^{2} L_{x}^{8}(\mathbb{R} \times \mathbb{R}^{4})}.
\end{equation}

\noindent Combining $(\ref{8.5})$, $(\ref{8.6})$, and $(\ref{8.7})$,

\begin{align}\label{8.8}
&\sum_{N} \left(\sup_{\rho > 0} \rho^{-1/2} N \| P_{N} v \|_{L_{T,x}^{2}(\mathbb{R} \times \{ x : |x| \leq \rho\})} \right)^{2} \notag\\ 
&\leq \sum_{N} \left(\sup_{\rho > 0} \rho^{-1/2} \Big\| \nabla P_{N} v \Big \|_{L_{T,x}^{2}(\mathbb{R} \times \{ x : |x| \leq \rho \})}\right)^{2} + \sum_{N} \Big\| P_{N} v \Big\|_{L_{T}^{2} L_{x}^{8}(\mathbb{R} \times \mathbb{R}^{4})}^{2}.
\end{align}

\noindent The proof of $(\ref{8.3.1})$ is straightforward. Applying $(\ref{5.15})$, the Huygens principle, Theorem $\ref{t1}$ and $\ref{t4}$,

\begin{align}\label{8.9}
&\Big\| P_{N} \ptl_T v \Big\|_{L_{T}^{2} L_{x}^{8}(\mathbb{R} \times \mathbb{R}^{4})} \notag\\
&\leq N \Big\| P_{N} v_{0} \Big\|_{\dot{H}^{1}(\mathbb{R}^{4})} + N \Big\| P_{N} v_{1} \Big\|_{L^{2}(\mathbb{R}^{4})} + \Big\| \nabla P_{N} F \Big\|_{Y + L_{T}^{2} L_{x}^{8/7}},
\end{align}

\noindent where if $V$ and $W$ are Banach spaces, 
\[ \Big\| \, f \,\Big\|_{V + W} = \inf_{f = f_{1} + f_{2}} \Big\|\, f_{1} \,\Big\|_{V} + \Big\|\, f_{2} \,\Big\|_{W}. \]
\vspace{5mm}

\noindent By Holder's inequality,

\begin{equation}\label{8.10}
\Big\| \nabla P_{N} (\phi(Nx) \tilde{P}_{N} F) \Big\|_{L_{tT}^{2} L_{x}^{8/7}(\mathbb{R} \times \mathbb{R}^{4})} \leq N \sum_{j} 2^{j/2} \Big\| \tilde{P}_{N} F \Big\|_{L_{T,x}^{2}(\mathbb{R} \times \{ x : 2^{j} \leq |x| \leq 2^{j + 1} \})}.
\end{equation}

\noindent Meanwhile, by Holder's inequality and the fact that the Littlewood - Paley kernel is rapidly decreasing,

\begin{align}\label{8.11}
\Big\| (1 - \phi(Nx)) \nabla& P_{N} ((1 - \phi(Nx)) \tilde{P}_{N} F) \Big\|_{Y} \notag\\
&+ \Big\| \phi(Nx) \nabla P_{N} ((1 - \phi(Nx)) \tilde{P}_{N} F) \Big\|_{L_{T}^{2} L_{x}^{8/7}(\mathbb{R} \times \mathbb{R}^{4})} \notag \\
&\quad\quad\leq N \sum_{j} 2^{j/2} \Big\| \tilde{P}_{N} F \Big\|_{L_{T,x}^{2}(\mathbb{R} \times \{ x : 2^{j} \leq |x| \leq 2^{j + 1} \})}.
\end{align}

\noindent Since $\tilde{P}_{N}$ has finite overlap, this completes the proof of $(\ref{8.3.1})$. $\Box$

\subsection{Scattering for Small Data}

\begin{theorem}[Scattering]\label{t9}
The nonlinear wave equation
\begin{equation}\label{9.1}
\left. \begin{array}{rcl}
\leftexp{4+1}{\square}\, v &=& F(v)\,\,\,\,\,\,\,\,\,\,\,\,\,\,\, \,\,\,\,\,\,\,\,\,\,\,\,\,\,\,\,\,\,\textnormal{on}\,\, \mathbb{R}^{4+1}\\
 v_0 = v (0, x) & \textnormal{and}& v_1 = \ptl_T v (0, x) \,\,\,\,\,\,\,\, \textnormal{on}\,\, \mathbb{R}^4\\\end{array} 
\right\}
\end{equation}
with 
\[F(v) = \left(e^{2Z} - 1 + \left(\frac{r}{R}\partial_{\eta} r + \frac{1}{2}\right) - \left(\frac{r}{R}\partial_{\xi} r - \frac{1}{2}\right)\right) \frac{v}{r^{2}} + e^{2Z} \frac{R^{2}}{r^{2}} v^{3} \zeta(R v) \]

and
\begin{align} \label{metric-small}
\Big|e^{2Z} - 1 \Big|, \hspace{5mm} \Big|\frac{R}{r} - 1\Big|, \hspace{5mm} \Big|R v(T, R)\Big| \leq E(v)
\end{align}

\noindent has a solution with $\| v \|_{L_{T}^{2} L_{x}^{8}(\mathbb{R} \times \mathbb{R}^{4})} < \infty$ for energy $E(v)$ sufficiently small.
\end{theorem}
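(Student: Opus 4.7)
The plan is to apply Theorem \ref{t8} and close an a priori bound in the space $X$. Given a globally regular solution $v$ of \eqref{Problem1} satisfying the pointwise assumptions \eqref{metric-small}, Theorem \ref{t8} reduces the problem to the nonlinear estimate
\begin{equation}\label{planeq1}
\| F(v) \|_{Y} \leq C_{0}\, E(v)\, \| v \|_{X}.
\end{equation}
Combined with the energy bound $\| v_{0} \|_{\dot{H}^{1}} + \| v_{1} \|_{L^{2}} \lesssim \sqrt{E(v)} \leq \varepsilon$, \eqref{planeq1} gives $\| v \|_{X} \leq \varepsilon + C_{0}\varepsilon^{2} \| v \|_{X}$, and a continuity argument on the truncated norm $\|v\|_{X_{T}}$ (using global regularity from Theorem \ref{thm:main-first}) concludes $\| v \|_{X} \lesssim \varepsilon$ uniformly in $T$. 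In particular $\| v \|_{L^{2}_{T}L^{8}_{x}} \leq \| v \|_{X} < \infty$, and scattering in the energy topology follows from convergence of the Duhamel integral in $\dot{H}^{1}\times L^{2}$.

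To prove \eqref{planeq1}, I decompose $F = F^{\mathrm{lin}} + F^{\mathrm{cub}}$, where
\begin{equation}
F^{\mathrm{lin}} = C(T,R)\,\frac{v}{r^{2}}, \qquad F^{\mathrm{cub}} = e^{2Z}\,\frac{R^{2}}{r^{2}}\, v^{3}\, \zeta(Rv),
\end{equation}
with $C(T,R) = (e^{2Z}-1) + (\tfrac{r}{R}\partial_{\eta}r + \tfrac{1}{2}) - (\tfrac{r}{R}\partial_{\xi}r - \tfrac{1}{2})$. The assumption \eqref{metric-small} gives the pointwise bound $|C(T,R)| \leq 3 E(v)$, while rewriting $R^{2}v^{3} = (Rv)^{2} v$ together with $|Rv|\leq E(v)$ and smoothness of $\zeta$ yields $|F^{\mathrm{cub}}| \lesssim E(v)^{2}|v|/R^{2}$. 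Both pieces therefore satisfy the uniform bound $|F(v)| \lesssim E(v)\,|v|/|x|^{2}$, and I place the whole of $F$ in the Morawetz-dual (weighted $L^{2}$) component of the $Y$-norm, taking the $L^{1}_{T} L^{2}_{x}$ component to be zero.

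For the dyadic bound, I factor $|x|^{-2} = |x|^{-1/2}\cdot |x|^{-3/2}$ so that on each annulus $A_{j} = \{2^{j}\leq |x|\leq 2^{j+1}\}$,
\begin{equation}
\| F(v) \|_{L^{2}_{T,x}(A_{j})} \lesssim E(v) \cdot 2^{-j/2}\, \bigl\| |x|^{-3/2} v \bigr\|_{L^{2}_{T,x}(A_{j})}.
\end{equation}
The weight $2^{j/2}$ in the $Y$-norm exactly cancels this $2^{-j/2}$ decay; after Littlewood--Paley localization $P_{N}$ and Cauchy--Schwarz in $j$ (using square summability of $\| |x|^{-3/2} P_{N} v\|_{L^{2}_{T,x}(A_{j})}$ in $j$), the sum is controlled by the component $\sum_{N}\||x|^{-3/2}P_{N}v\|^{2}_{L^{2}_{T,x}} \leq \|v\|_{X}^{2}$ built into Definition \ref{d6}, yielding \eqref{planeq1} with constant proportional to $E(v)$.

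The principal obstacle is making this splitting rigorous at the Littlewood--Paley level: spatial cutoffs $\mathbf{1}_{A_{j}}$ and multiplication by the coefficient $C(T,R)$ do not commute with $P_{N}$, so the pointwise bound on $|F|$ does not translate directly into a bound on $\|P_{N} F\|_{L^{2}(A_{j})}$. The resolution is the same Bernstein commutator argument already exploited in the proof of Theorem \ref{t8} (compare \eqref{8.5.1}--\eqref{8.7}): the Littlewood--Paley kernel is rapidly decreasing, and $C(T,R)$ is uniformly bounded with spatial oscillation controlled by the integrated bounds on $\partial^{2}_{\xi\eta} r$ and $\partial^{2}_{\xi\eta} Z$ coming from the Einstein equations \eqref{eq:waver}--\eqref{eq:waveZ}, so that the commutator contributes at most an error again of size $E(v)\|v\|_{X}$. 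The borderline balance $2^{j/2}\cdot 2^{-j/2}=1$ forces Cauchy--Schwarz rather than $\ell^{1}$-triangle summation in $j$, which is precisely why the full Morawetz component of $X$ (and not merely Strichartz) is needed to close. Once \eqref{planeq1} is proved, the scattering conclusion is immediate.
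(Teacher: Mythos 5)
Your overall architecture (reduce to a $Y$-norm bound on $F$ via Theorem \ref{t8}, then bootstrap in $X$) matches the paper, but the core nonlinear estimate as you set it up does not close, and the failure is exactly at the point you flag as "borderline." You place all of $F$ in the Morawetz-dual component of $Y$ and bound $\|P_N F\|_{L^2_{T,x}(A_j)} \lesssim E(v)\, 2^{-j/2} \| |x|^{-3/2} P_N v \|_{L^2_{T,x}(A_j)}$. The $Y$-norm then requires you to control
\begin{equation*}
\sum_j 2^{j/2} \| P_N F \|_{L^2_{T,x}(A_j)} \lesssim E(v) \sum_j \bigl\| |x|^{-3/2} P_N v \bigr\|_{L^2_{T,x}(A_j)},
\end{equation*}
i.e.\ the $\ell^1_j$ sum of the annular pieces, whereas the $X$-norm only controls their $\ell^2_j$ sum $\bigl(\sum_j \| |x|^{-3/2} P_N v\|_{L^2(A_j)}^2\bigr)^{1/2}$. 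Cauchy--Schwarz in $j$ produces a factor $(\sum_j 1)^{1/2} = \infty$; there is no leftover geometric decay in $j$, so the sum is (at best) logarithmically divergent and the claimed bound $\|F\|_Y \lesssim E(v)\|v\|_X$ does not follow. This is not a technicality about commutators with $P_N$ --- it is why the paper does not put the whole nonlinearity into the dyadic component of $Y$.

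The paper's proof circumvents this in two ways you would need to replicate. First, the cubic term and the terms $(\partial_\eta r + \tfrac12)\tfrac{v}{R^2}$, $(\partial_\xi r - \tfrac12)\tfrac{v}{R^2}$ are placed in the $L^1_T L^2_x$ component $F_1$ of $Y$ (Lemmas \ref{l10} and \ref{l11}): the cubic term via $\|v\|_{L^2_T L^8}^2 \|v\|_{L^\infty_T L^4}$, and the $r$-terms by integrating the Einstein equation $\partial_\xi(\partial_\eta r + \tfrac12) = \tfrac{e^{2Z}}{4}\tfrac{f^2(Rv)}{r}$ along characteristics, changing variables $s = \lambda R$, and using the weighted Strichartz norm $\||x|^{1/4}v\|_{L^2_T L^{16}_x}$ --- none of which appears in your argument. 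Second, for the remaining piece $(e^{2Z}-1)\tfrac{v}{R^2}$ the paper introduces the mass-aspect function $m = 1 + 4e^{-2Z}\partial_\xi r\,\partial_\eta r$, reduces to estimating $\sum_j 2^{j/2}\|P_N(m\, P_M v/R^2)\|_{L^2(A_j)}$, and gains the missing decay in $j$ by a Bernstein argument that trades a factor $N^{-1}$ for a derivative, with $|\partial_R m| \leq \tfrac{f(u)^2}{4r} + r(\partial u)^2$ supplied by the Einstein equations; the resulting case analysis in $M$ versus $N$ and $2^j$ versus $M^{-1/4}N^{-3/4}$ is what makes the dyadic sum converge. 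Without an input of this kind --- some additional smallness or decay in $j$ beyond the bare $2^{j/2}\cdot 2^{-j/2} = 1$ balance --- your estimate \eqref{planeq1} cannot be established by the route you describe.
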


\noindent \emph{Proof:} By Theorem $\ref{t8}$ and \eqref{metric-small}, it suffices to prove

\begin{equation}\label{9.2}
\aligned
\Bigg\| \left(e^{2Z} - 1 + \left(\partial_{\eta} r + \frac{1}{2}\right) - \left(\partial_{ \xi} r - \frac{1}{2}\right) \right) \frac{v}{R^{2}} + e^{2Z} \frac{R^{2}}{r^{2}} \zeta(R v) v^{3} \Bigg\|_{Y} \\ \leq \| v \|_{X}^{2} E(v)^{1/2} + E(v)^{3} + c(E(v)) \| v \|_{X},
\endaligned
\end{equation}

\noindent for some quantity $c(E(v)) \searrow 0$ as $E(v) \searrow 0$. Indeed, then

\begin{equation}\label{9.3}
\| v \|_{X} \leq E(v)^{1/2} + c(E(v)) \| v \|_{X} + E(v)^{1/2} \| v \|_{X}^{2} + E(v)^{3/2},
\end{equation}

\noindent so for $E(v)$ sufficiently small, $\| v \|_{X} \leq E(v)^{1/2}$. The proof of $(\ref{9.2})$ will occupy the remainder of the paper.

\begin{lemma}\label{l10}
\begin{equation}\label{10.1}
\Big\| e^{2Z} \frac{R^{2}}{r^{2}} v^{3} \zeta(Rv) \Big\|_{L_{T}^{1} L_{R}^{2}(\mathbb{R} \times \mathbb{R}^{4})} \leq \| v \|_{X}^{2} E(v)^{1/2}.
\end{equation}
\end{lemma}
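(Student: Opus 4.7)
The plan is to reduce the left-hand side to a bilinear estimate on $v^{2}/|x|$, then exploit an interpolation between the Morawetz-type piece $\||x|^{-3/2} v\|_{L^2_{T,x}}$ and the radial Strichartz piece $\||x|^{1/4} v\|_{L^2_T L^{16}_x}$, both of which are summands in $\|v\|_X^2$.

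First, I would bound the prefactors pointwise. The hypotheses $|e^{2Z}-1|, |R/r-1| \leq E(v)$ imply that $e^{2Z}$ and $R^2/r^2$ are both $O(1)$, and since $\zeta$ is smooth and $|Rv| \leq E(v) \ll 1$, one has $|\zeta(Rv)| \lesssim 1$. Therefore $\bigl|e^{2Z}(R^2/r^2) v^{3}\zeta(Rv)\bigr| \lesssim |v|^{3}$. Now I would invoke the pointwise smallness $|v(T,R)| \leq E(v)/R = E(v)/|x|$ to peel off one factor, giving $|v|^{3} \leq E(v)\, v^{2}/|x|$. The task reduces to showing $\|v^{2}/|x|\|_{L^1_T L^2_x} \lesssim \|v\|_X^2$.

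The key algebraic identity is the direct computation
$$\|v^{2}/|x|\|_{L^2_x}^2 = \int v^{4}/|x|^{2}\,dx = \||x|^{-1/2} v\|_{L^4_x}^4,$$
so that $\|v^{2}/|x|\|_{L^1_T L^2_x} = \||x|^{-1/2} v\|_{L^2_T L^4_x}^{2}$. For a single function $w$ on $\mathbb{R}^4$, the pointwise factorization $|x|^{-2} w^{4} = (|x|^{-3} w^{2})^{6/7} (|x|^{4} w^{16})^{1/7}$ combined with Hölder with exponents $(7/6, 7)$ yields
$$\||x|^{-1/2} w\|_{L^4_x} \leq \||x|^{-3/2} w\|_{L^2_x}^{3/7}\,\||x|^{1/4} w\|_{L^{16}_x}^{4/7}.$$
A further Hölder in $T$ with exponents $(7/3, 7/4)$ upgrades this to $\||x|^{-1/2} w\|_{L^2_T L^4_x}^{2} \leq \||x|^{-3/2} w\|_{L^2_{T,x}}^{6/7}\,\||x|^{1/4} w\|_{L^2_T L^{16}_x}^{8/7}$, and Young's inequality then bounds the right side by $\tfrac{3}{7}\||x|^{-3/2} w\|_{L^2_{T,x}}^{2} + \tfrac{4}{7}\||x|^{1/4} w\|_{L^2_T L^{16}_x}^{2}$.

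Finally, I would apply this with $w = P_N v$ and sum in $N$. Since $|x|^{-2}$ lies in the Muckenhoupt class $A_4(\mathbb{R}^4)$, the weighted Littlewood--Paley square function gives $\||x|^{-1/2} v\|_{L^4_x} \sim \|(\sum_N |P_N v|^{2})^{1/2}\|_{L^4(|x|^{-2}\,dx)}$, and Minkowski's inequality (available since $4 \geq 2$) then yields $\||x|^{-1/2} v\|_{L^4_x}^{2} \lesssim \sum_N \||x|^{-1/2} P_N v\|_{L^4_x}^{2}$. Integrating in $T$ and applying the per-$N$ interpolation above produces $\||x|^{-1/2} v\|_{L^2_T L^4_x}^{2} \lesssim \|v\|_X^{2}$. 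Combining everything gives $\|v^{3}\|_{L^1_T L^2_x} \leq E(v)\|v\|_X^{2} \leq E(v)^{1/2}\|v\|_X^{2}$, using $E(v) \leq E(v)^{1/2}$ for small data. The main obstacle is handling the Littlewood--Paley summation against the spatial weight $|x|^{-1/2}$: either one invokes the weighted square function theory for $A_p$ weights as above, or one proceeds by hand via commutator estimates $[P_N, |x|^{-1/2}]$ of the type already used in the paper (see Theorem 4.3).
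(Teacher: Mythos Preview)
Your argument is correct, but it is considerably more elaborate than the paper's. The paper dispatches the lemma in one line: after bounding the prefactors $e^{2Z}$, $R^{2}/r^{2}$, $\zeta(Rv)$ pointwise by $O(1)$ exactly as you do, it applies the straightforward H\"older splitting
\[
\|v^{3}\|_{L^{1}_{T}L^{2}_{x}} \;\leq\; \|v\|_{L^{2}_{T}L^{8}_{x}}^{2}\,\|v\|_{L^{\infty}_{T}L^{4}_{x}},
\]
bounds the first factor by $\|v\|_{X}^{2}$ via the $L^{2}_{T}L^{8}_{x}$ summand in the definition of $X$, and bounds the second by $E(v)^{1/2}$ using the Sobolev embedding $\dot{H}^{1}(\mathbb{R}^{4})\hookrightarrow L^{4}(\mathbb{R}^{4})$ together with the energy. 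No weights, no interpolation, no square--function theory.

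By contrast, you peel off one factor of $v$ via the pointwise bound $|v|\le E(v)/|x|$, then interpolate the resulting $\||x|^{-1/2}v\|_{L^{2}_{T}L^{4}_{x}}$ between the Morawetz piece $\||x|^{-3/2}P_{N}v\|_{L^{2}_{T,x}}$ and the radial Strichartz piece $\||x|^{1/4}P_{N}v\|_{L^{2}_{T}L^{16}_{x}}$ of $X$, and finally invoke weighted Littlewood--Paley ($|x|^{-2}\in A_{4}(\mathbb{R}^{4})$) to sum in $N$. This route works, and even yields the slightly stronger prefactor $E(v)$ rather than $E(v)^{1/2}$, but it is overkill here: the paper's proof uses only the plain endpoint Strichartz component of $X$, avoiding both the spatial weights and the $A_{p}$ machinery entirely. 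Your approach would be more natural if the $L^{\infty}_{T}L^{4}_{x}$ (energy) bound were unavailable, but in this small--data setting it is.
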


\noindent \emph{Proof:} This is straightforward. From \cite{diss, AGS}, for $E(v)$ sufficiently small, 

\begin{equation}
\Big|e^{2Z} - 1 \Big|, \hspace{5mm} \Big|\frac{R}{r} - 1\Big|, \hspace{5mm} \sup_{T, R} \Big|R v(T, R)\Big| \leq E(v),
\end{equation}

\noindent so

\begin{equation}\label{10.2}
\Big\| e^{2Z} \frac{R^{2}}{r^{2}} v^{3} \zeta(R v) \Big\|_{L_{T}^{1} L_{R}^{2}(\mathbb{R} \times \mathbb{R}^{4})} \leq \| v \|_{L_{T}^{2} L_{R}^{8}}^{2} \| v \|_{L_{T}^{\infty} L_{R}^{4}} \leq\| v \|_{X}^{2} E(v)^{1/2}.
\end{equation}

\noindent Next,

\begin{lemma}\label{l11}
\begin{equation}\label{11.1}
\Big\| \left(\partial_{\eta} r + \frac{1}{2}\right) \frac{v}{R^{2}} \Big\|_{L_{T}^{1} L_{R}^{2}(\mathbb{R} \times \mathbb{R}^{4})} \leq \| v \|_{X}^{2} E(v),
\end{equation}

\noindent and

\begin{equation}\label{11.2}
\Big\| \left(\partial_{\xi} r - \frac{1}{2}\right) \frac{v}{R^{2}}\Big \|_{L_{T}^{1} L_{R}^{2}(\mathbb{R} \times \mathbb{R}^{4})} \leq \| v \|_{X}^{2} E(v).
\end{equation}
\end{lemma}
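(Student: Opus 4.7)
The strategy is to convert the geometric corrections $\partial_\eta r + \tfrac{1}{2}$ and $\partial_\xi r - \tfrac{1}{2}$ into null-line integrals of the Einstein source using the constraint \eqref{eq:waver}, extract one factor of $E(v)$ from the smallness of $u = Rv$, and then reduce the desired bound to the structural estimate already carried out in Lemma \ref{l10}. The axis conditions $r|_\Gamma = 0$, $\partial_R r|_\Gamma = 1$, $\partial_T r|_\Gamma = 0$ give $\partial_\xi r|_\Gamma = \tfrac{1}{2}$ and $\partial_\eta r|_\Gamma = -\tfrac{1}{2}$, so integrating $\partial^2_{\xi \eta} r = \tfrac{1}{4}\, e^{2Z} f^2(u)/r$ from the axis outward along the null ray $\eta = \mathrm{const}$ to the point with coordinates $(T, R)$ yields
\[
\partial_\eta r(T, R) + \tfrac{1}{2} \, = \, \tfrac{1}{2} \int_0^R \frac{e^{2Z} f^2(u)}{r}\bigg|_{(T - R + R',\, R')}\, dR',
\]
with the analogous formula for $\partial_\xi r - \tfrac{1}{2}$ obtained by integrating along the conjugate null ray $\xi = \mathrm{const}$.

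Under the hypotheses $|e^{2Z} - 1|,\, |R/r - 1|,\, |Rv| \le E(v) \ll 1$ and the expansion $f^2(u) = u^2 + O(u^4)$ following from $f(u) f_u(u) = u + u^3 \zeta(u)$, the integrand is pointwise dominated by $R' v(T', R')^2$. Invoking $R'|v| \le E(v)$ once gives the crucial reduction
\[
\bigl|\partial_\eta r + \tfrac{1}{2}\bigr|(T, R) \, \lesssim \, E(v) \int_0^R |v(T', R')|\, dR', \qquad T' = T - R + R',
\]
which exhibits $(\partial_\eta r + \tfrac{1}{2})\, v/R^2$ as a trilinear expression in $v$ carrying the desired small prefactor $E(v)$. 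The desired bound then amounts to controlling $\|R^{-2}\, v(T, R) \int_0^R |v(T', R')|\, dR'\|_{L^1_T L^2_R(\mathbb{R} \times \mathbb{R}^4)}$ by $\|v\|_X^2$, which will follow from Holder's inequality in $T$ together with the Strichartz, weighted radial Strichartz, and Morawetz components of the $X$-norm, in the same spirit as the proof of Lemma \ref{l10}. The symmetric estimate for $\partial_\xi r - \tfrac{1}{2}$ is obtained verbatim by integrating along the opposite null direction.

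The principal obstacle is the non-local character of the null integral $\int_0^R |v(T - R + R', R')|\, dR'$, which couples the values of $v$ at distinct times along an outgoing null ray and therefore prevents a direct application of fixed-time Sobolev or Strichartz bounds. I expect to handle this either by a change of variables in $T$ that absorbs the null shift, so that the inner integral can be majorized by a space-time Morawetz norm such as $\||x|^{-3/2} v\|_{L^2_{T,x}}$, or equivalently by an energy-flux estimate across the outgoing null hypersurface. Carrying this step out cleanly without sacrificing the gained factor $E(v)$ is the key technical point of the argument.
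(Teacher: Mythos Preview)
Your strategy coincides with the paper's at the structural level: both integrate the constraint \eqref{eq:waver} along the outgoing null ray from the axis, using $\partial_\eta r|_\Gamma = -\tfrac12$, to write $\partial_\eta r + \tfrac12$ as a line integral of $e^{2Z} f^2(u)/r$, and both recognize the null shift $(T,R)\mapsto (T-R+R',R')$ as the point requiring work. The paper's execution differs from yours in one respect worth noting. Rather than pulling out $E(v)$ at the outset via $|R'v|\le E(v)$, the paper keeps the integrand as $s\,v(T-R+s,s)^2$ and rescales $s=\lambda R$, so that the whole expression becomes $v(T,R)\int_0^1 v(T+(\lambda-1)R,\lambda R)^2\,\lambda\,d\lambda$ with no leftover $R$-weights; H\"older then pairs $\|R^{1/4}v\|_{L^2_T L^{16}}$ against $\|R^{-1/4}v_\lambda^2\|_{L^2_T L^{16/7}}$, and the shifted factor is put into diagonal norms $L^4_{T,R}$ where the change of variables $(T,R)\to(T+(\lambda-1)R,\lambda R)$ has the clean Jacobian $\lambda^{-1}$. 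The factor $E(v)$ appears only at the end, through $\|v\|_{L^\infty_T L^4}\lesssim E(v)^{1/2}$. Your early extraction sacrifices the factor of $s$ (hence of $\lambda$) that the paper uses to make $\int_0^1 \lambda\cdot\lambda^{-\alpha}\,d\lambda$ converge; your route can still be closed, but it forces a different H\"older split and the analogy with Lemma~\ref{l10} is misleading, since that lemma involves no null shift at all.
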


\noindent \emph{Proof:} First take $(\ref{11.1})$. By the fundamental theorem of calculus,

\begin{equation}\label{11.3}
 \partial_{\eta} r \Big|_{R = 0} = -\frac{1}{2}, \hspace{5mm} \partial_{\xi} \left(\partial_{\eta} + \frac{1}{2}\right) r = \frac{e^{2Z}}{4} \frac{f^{2}(R v)}{r},
\end{equation}

\noindent $Z \sim 1$, $f(0) = 0$, $f'(0) = 1$, $| R v| \leq E(v)^{1/2}$, $\xi =  T + R$, $\eta = T - R$, then

\begin{equation}\label{11.4}
\left(\ptl_\xi r + \frac{1}{2} \right) \frac{v(T, R)}{R^{2}} \leq  \frac{1}{R^{2}} \left(\int_{0}^{R} v(T - R + s, s)^{2} \cdot s ds \right) v(T, R).
\end{equation}

\noindent Making a change of variables $s = \lambda R$, $0 \leq \lambda \leq 1$,

\begin{align}\label{11.5}
&\Bigg\| \left(\partial_{\eta} r + \frac{1}{2} \right) \frac{v(T, R)}{R^{2}} \Bigg\|_{L_{T}^{1} L_{R}^{2}(\mathbb{R} \times \mathbb{R}^{4})}  \notag\\
&\leq \Bigg\| \left(\int_{0}^{1} v(T + (\lambda - 1) R, \lambda R)^{2} \hspace{1mm} v \hspace{1mm} d\lambda \right) v(T, R) \Bigg\|_{L_{T}^{1} L_{R}^{2}(\mathbb{R} \times \mathbb{R}^{4})} \notag \\ 
&\leq \Big\| R^{1/4} v(T, R) \Big\|_{L_{T}^{2} L_{R}^{16}(\mathbb{R} \times \mathbb{R}^{4})} \cdot \\
& \quad \cdot \left(\int_{0}^{1} \Bigg\| \frac{1}{R^{1/4}} v(T+ (\lambda - 1) R, \lambda R)^{2} \Bigg\|_{L_{T}^{2} L_{R}^{16/7}(\mathbb{R} \times \mathbb{R}^{4})} \hspace{1mm} \lambda \hspace{1mm} d\lambda \right).
\end{align}

\noindent Doing a change of variables,

\begin{align}\label{11.6}
&\Bigg\| \frac{1}{R^{1/4}} v(T + (\lambda - 1) R, \lambda R) \Bigg\|_{L_{T, R}^{4}(\mathbb{R} \times \mathbb{R}^{4})}^{4} \notag\\ 
&= \int \int v(T + (\lambda - 1) R, \lambda R)^{4} R^{2} dR dT = \lambda^{-3} \int \int v(T, R)^{4} R^{2} dR dT.
\end{align}

\noindent By Hardy's inequality and the Sobolev embedding theorem,

\begin{align}\label{11.7}
&\lambda^{-3} \Big\| \frac{1}{R^{1/4}} v(T, R) \Big\|_{L_{T, R}^{4}(\mathbb{R} \times \mathbb{R}^{4})}^{4} \notag\\ 
\leq& \,\lambda^{-3} \Big\| \frac{1}{R} v(T, R) \Big\|_{L_{T}^{\infty} L_{R}^{2}(\mathbb{R} \times \mathbb{R}^{4})} \Big\|\, v (T, R)\, \Big\|_{L_{T}^{\infty} L_{R}^{4}(\mathbb{R} \times \mathbb{R}^{4})} \Big\| \,v (T, R)\, \Big\|_{L_{T}^{2} L_{R}^{8}(\mathbb{R} \times \mathbb{R}^{4})}^{2}  \notag\\
 \leq& \,E(v) \| v \|_{X}^{2}.
\end{align}

\noindent Meanwhile, by the Sobolev embedding theorem and interpolation,

\begin{align}\label{11.8}
&\Big\| v(T + (\lambda - 1) R, \lambda R) \Big\|_{L_{T}^{4} L_{R}^{16/3}(\mathbb{R} \times \mathbb{R}^{4})}^{4}  \\
\leq& \, \Big\| |\partial_{R}|^{1/4} v(T + (\lambda - 1) R, \lambda R) \Big\|_{L_{T, R}^{4}(\mathbb{R} \times \mathbb{R}^{4})}^{4} \\ &\lesssim \| \partial_{R} v(T + (\lambda - 1) R, \lambda R) \|_{L_{t}^{\infty} L_{R}^{2}} \| v(T, R) \|_{L_{T}^{3} L_{R}^{6}}^{3} \notag\\
\leq &\,\lambda^{-4} \left(\Big\| \partial_{T} v (T, R) \Big\|_{L_{T}^{\infty} L_{R}^{2}} + \Big\| \partial_{R} v (T, R)\Big \|_{L_{T}^{\infty} L_{R}^{2}}\right) \Big\|\, v (T, R) \,\Big\|_{L_{T}^{2} L_{R}^{8}}^{2} \Big\| \,v (T, R)\, \Big\|_{L_{T}^{\infty} L_{R}^{4}} \notag \\
\leq& \lambda^{-4} (T, R) \Big\|\, v (T, R) \,\Big\|_{X}^{2} E(v).
\end{align}

\noindent Plugging $(\ref{11.7})$ and $(\ref{11.6})$ into $(\ref{11.5})$,

\begin{equation}\label{11.10}
\Big\| \left(\partial_{\eta} r + \frac{1}{2} \right) \frac{v}{R^{2}} \Big\|_{L_{T}^{1} L_{R}^{2}(\mathbb{R} \times \mathbb{R}^{4})} \leq \| v\|_{X}^{2} E(v) \left(\int_{0}^{1} \lambda^{-3/4} d\lambda \right) \leq\| v \|_{X}^{2} E(v)^{1/2}.
\end{equation}

\noindent This takes care of $(\ref{11.1})$. The proof of $(\ref{11.2})$ is almost identical, this time integrating $\partial_{\eta} \partial_{\eta} r$ with respect to $\eta$ and utilizing $(\ref{11.3})$, and $\partial_{\eta} r\big|_{R = 0} = \frac{1}{2}$. $\Box$\vspace{5mm}

\noindent To compute

\begin{equation}\label{12.1}
(e^{2Z} - 1) \frac{v}{R^{2}},
\end{equation}

\noindent we will use the following `mass-aspect' function, 

\begin{equation}\label{12.1.1}
m \fdg= 1 + 4 e^{-2Z} \partial_{\xi} r \partial_{\eta} r.
\end{equation}

\noindent $\Big|e^{2Z} - 1\Big| \leq c(E(v))$ small implies that $\Big|1 - e^{-2Z}\Big|$ is small, so we can make the expansion

\begin{equation}\label{12.2}
e^{2Z} = \frac{1}{1 - (1 - e^{-2Z})} = \sum_{n = 0}^{\infty} (1 - e^{-2Z})^{n},
\end{equation}

\noindent and

\begin{equation}\label{12.3}
e^{2Z} - 1 = e^{2Z} (1 - e^{-2Z}) = \sum_{n = 0}^{\infty} (1 - e^{-2Z})^{n + 1}.
\end{equation}

\noindent The sums converge exponentially, so we will confine our computations to the leading order term in $(\ref{12.3})$, and estimate

\begin{equation}\label{12.4}
\left(1 - e^{-2Z}\right) \frac{v}{R^{2}}.
\end{equation}

\begin{equation}\label{12.5}
\left(1 - e^{-2Z}\right) \frac{v}{R^{2}} = \left(1 + 4 e^{-2Z} \partial_{\eta} r \partial_{\xi} r \right) \frac{v}{R^{2}} - 4 e^{-2Z}\left(\partial_{\eta} r \partial_{\xi} r + \frac{1}{4}\right) \frac{v}{R^{2}}.
\end{equation}

\noindent Now since

\begin{equation}\label{12.6}
\partial_{\eta} r \partial_{\xi} r + \frac{1}{4} = \partial_{\eta} r \left(\partial_{\xi} r - \frac{1}{2}\right) + \frac{1}{2} \left(\partial_{\eta} r + \frac{1}{2}\right),
\end{equation}

\noindent lemma $\ref{l11}$ implies

\begin{equation}\label{12.7}
\Bigg\| \left(\partial_{\eta} r \partial_{\eta} r + \frac{1}{4}\right) \frac{v}{R^{2}} \Bigg\|_{L_{T}^{1} L_{R}^{2}(\mathbb{R} \times \mathbb{R}^{4})} \leq \| v \|_{X}^{2} E(v)^{1/2},
\end{equation}

\noindent so it only remains to compute

\begin{equation}
\sum_{j} 2^{j/2} \Big\| m \frac{v}{R^{2}} \,\Big\|_{L_{T,x}^{2}(\mathbb{R} \times \{ x : 2^{j} \leq |x| \leq 2^{j + 1} \})}.
\end{equation}

\noindent By $(\ref{12.5})$ and $\Big|1 - e^{2Z}\Big|$, $\Big|\partial_{\eta} r + \frac{1}{2}\Big|$, $\Big|\partial_{\xi} r - \frac{1}{2}\Big| \leq c(E(v))$,

\begin{equation}\label{12.7.1}
\sup_{T, R} |m| \leq c(E(\omega)).
\end{equation}

\noindent Now make a spatial partition of unity. Suppose $\phi(x) \in C_{0}^{\infty}(\mathbb{R}^{4})$ is a radial, decreasing function, $\phi(x) = 1$ for $|x| \leq 1$, and $\phi(x)$ is supported on $|x| \leq 2$. Then let

\begin{equation}\label{12.9}
\chi(x) = \phi(\frac{x}{2}) - \phi(x).
\end{equation}

\noindent For any $x \neq 0$,

\begin{equation}\label{12.10}
\sum_{j \in \mathbf{Z}} \chi(2^{-j} x) = 1.
\end{equation}

\noindent Combining $(\ref{12.10})$ with the Littlewood - Paley decomposition,

\begin{equation}\label{12.11}
\aligned
P_{N} \left(m \frac{v}{R^{2}}\right) = \sum_{M} \sum_{j \in \mathbf{Z}} P_{N}\left(\chi(2^{-j} \rho) m \frac{P_{M} v}{R^{2}}\right) \\ = \sum_{M} P_{N} (\phi(N R) \frac{P_{M} v}{R^{2}}) + \sum_{M} \sum_{2^{j} \geq \frac{1}{N}} P_{N} \left(\chi(2^{-j} \rho) m \frac{P_{M} v}{R^{2}}\right).
\endaligned
\end{equation}

\noindent Since the Littlewood - Paley convolution kernel is rapidly decreasing,

\begin{align}\label{12.12}
&\sum_{2^{j} \geq \frac{1}{N}} \sum_{k} 2^{k/2} \Big\| P_{N}(\chi(2^{-j} R) \frac{P_{M} v}{R^{2}}) \Big\|_{L_{t,x}^{2}(\mathbb{R} \times \{ x : 2^{k} \leq |x| \leq 2^{k + 1} \})} \notag\\
\leq& \sum_{2^{j} \geq \frac{1}{N}} \sum_{k \geq j} 2^{k/2} 2^{-5|j - k|}\Big \| P_{N}(\chi(2^{-j} R)\frac{P_{M} v}{R^{2}}) \Big\|_{L_{T,x}^{2}(\mathbb{R} \times \mathbb{R}^{4})} \notag\\
&\quad\quad\quad\quad\quad + \sum_{2^{j} \geq \frac{1}{N}} \sum_{k \leq j} 2^{k/2} \Big\| P_{N}(\chi(2^{-j} R) \frac{P_{M} v}{R^{2}}) \Big\|_{L_{T,x}^{2}(\mathbb{R} \times \mathbb{R}^{4})} \notag\\
\leq& \sum_{2^{j} \geq \frac{1}{N}} 2^{j/2} \Big\| P_{N}(\chi(2^{-j} R) \frac{P_{M} v}{R^{2}}) \Big\|_{L_{T,x}^{2}(\mathbb{R} \times \mathbb{R}^{4})},
\end{align}

\noindent and

\begin{align}
&\sum_{k} 2^{k/2} \Big\| P_{N} (\phi(N R) \frac{P_{M} v}{R^{2}}) \Big\|_{L_{T,x}^{2}(\mathbb{R} \times \{ x : 2^{k} \leq |x| \leq 2^{k + 1} \})} \notag\\
\leq& \sum_{2^{k} \leq \frac{1}{N}} 2^{k/2} \Big\| P_{N} (\phi(N R) \frac{P_{M} v}{R^{2}}) \Big\|_{L_{T,x}^{2}(\mathbb{R} \times \mathbb{R}^{4})}  \notag \\
&+ \sum_{2^{k} \geq \frac{1}{N}} 2^{k/2} \left(\frac{2^{-k}}{N}\right)^{5} \Big\| P_{N}(\phi(N R) \frac{P_{M} v}{R^{2}}) \Big\|_{L_{T,x}^{2}(\mathbb{R} \times \mathbb{R}^{4})} \notag \\
\leq& N^{-1/2} \Big\| P_{N}(\phi(N R) \frac{P_{M} v}{R^{2}}) \Big\|_{L_{T,x}^{2}(\mathbb{R} \times \mathbb{R}^{4})}.
\end{align}

\noindent For each $N$ we will consider four cases, $M \geq N$ on the support of $\phi(N \rho)$, $M \geq N$ and $R \geq \frac{1}{N}$, $M \leq N$ on the support of $\phi(N R)$, and $M \leq N$ and $R \geq \frac{1}{N}$. We start with $M \geq N$ and $R \geq \frac{1}{N}$. By $(\ref{12.7.1})$,

\begin{align}
&\Big\| \sum_{M \geq N} \sum_{2^{j} \geq \frac{1}{N}} P_{N} (\chi(2^{-j} R) m \frac{P_{M} v}{R^{2}}) \Big\|_{Y} \notag\\ 
\leq &\sum_{M \geq N} \sum_{2^{j} \geq \frac{1}{N}} 2^{j/2} c(E(v)) \Big\| \chi(2^{-j} R) \frac{P_{M} v}{R^{2}} \Big\|_{L_{T,x}^{2}(\mathbb{R} \times \mathbb{R}^{4})} \label{12.13}\\
\leq & \sum_{M \geq N} \sum_{2^{j} \geq \frac{1}{N}} \frac{2^{-j}}{M} c(E(v)) \left(\sup_{\rho > 0} \rho^{-1/2} \cdot M \Big\| P_{M} v \Big \|_{L_{T,x}^{2}(\mathbb{R} \times \{ x : |x| \leq \rho \})} \right)
 \label{12.14} \\
 \leq & \sum_{M \geq N} \frac{N}{M} c(E(v)) \left(\sup_{\rho} \rho^{-1/2} \cdot M \Big\| P_{M} v \Big \|_{L_{T,x}^{2}(\mathbb{R} \times \{ |x| \leq \rho \})}\right) \label{12.15}
\end{align}



\noindent Then by Young's inequality and $(\ref{6.1})$, since we are summing $M$ and $N$ over the dyadic integers $M = 2^{k}$, $N = 2^{l}$ for $k, l \in \mathbb{Z}$,

\begin{align}\label{12.16}
&\sum_{N} \left(\sum_{M \geq N} \frac{N}{M} c(E(v)) \left(\sup_{\rho} \rho^{-1/2} \cdot M \Big\| P_{M} v \Big\|_{L_{T,x}^{2}(\mathbb{R} \times \{ |x| \leq \rho\})}\right)\right)^{2} \notag \\ 
&\leq c(E(v)^{2} \| v \|_{X}^{2}.
\end{align}

\noindent For $M \geq N$ on the support of $\phi(N R)$, the Sobolev embedding theorem and Holder's inequality imply that

\begin{align}
&\sum_{M \geq N} N^{-1/2} \Big \|  P_{N}\left(\phi(N R) m \frac{P_{M} v}{R^{2}}\right) \Big \|_{L_{T,x}^{2}(\mathbb{R} \times \mathbb{R}^{4})} \label{12.17}\\ 
\leq & N^{-1/2} N^{2} \sum_{M \geq N} \Big \| \phi(R N) m \frac{P_{M} v}{R^{2}} \Big \|_{L_{T}^{2} L_{x}^{1}(\mathbb{R} \times \mathbb{R}^{4})}
\label{12.18} \\
\leq & c(E(v)) N^{3/2} \sum_{M \geq N} \Big \| \frac{1}{R^{1/6}} P_{M} v \Big \|_{L_{T,x}^{2}(\mathbb{R} \times \{ x : |x| \leq \frac{1}{N} \})} \Bigg \| \frac{1}{R^{11/6}} \Bigg \|_{L_{T}^{\infty} L_{x}^{2}(\mathbb{R} \times \{ x : |x| \leq \frac{1}{N} \})} \\
\leq & c(E(v)) \sum_{M \geq N} \frac{N}{M} \left(\sup_{\rho> 0} \rho^{-1/2} \cdot M \Big \| P_{M} v \Big \|_{L_{T,x}^{2}(\mathbb{R} \times \{ x : |x| \leq \rho \})}\right). \label{12.19}
\end{align}

\noindent Again by Young's inequality,

\begin{align}\label{12.20}
&\sum_{N} \left(\sum_{M \geq N} \frac{N}{M} c(E(v)) \left(\sup_{\rho} \rho^{-1/2} \cdot M \Big \| P_{M} v \Big \|_{L_{T,x}^{2}(\mathbb{R} \times \{ |x| \leq \rho \})}\right)\right)^{2}  \notag \\
&\leq c(E(v))^{2} \| v \|_{X}^{2}.
\end{align}

\noindent Likewise, for $M \leq N$ and $R$ on the support of $\phi(N R)$, the Sobolev embedding theorem and H{\"o}lder's inequality imply

\begin{align}\label{12.21}
&\sum_{M \leq N} N^{-1/2} \Big \| P_{N}\left(\phi(R N) \frac{P_{M} v}{R^{2}}\right)\Big  \|_{L_{T,x}^{2}(\mathbb{R} \times \mathbb{R}^{4})} \\ 
\leq & \sum_{M \leq N} c(E(v)) \Big \| \phi(R N) \frac{P_{M} v}{R^{2}} \Big \|_{L_{T}^{2} L_{x}^{8/5}(\mathbb{R} \times \mathbb{R}^{4})} \\
\leq &\sum_{M \leq N} N^{-1/2} c(E(v)) \Big \| P_{M} v \Big \|_{L_{T}^{2} L_{x}^{\infty}(\mathbb{R} \times \mathbb{R}^{4})} \\ 
\leq & \sum_{M \leq N} \frac{M^{1/2}}{N^{1/2}} c(E(v)) \Big \| P_{M} v \Big \|_{L_{T}^{2} L_{x}^{8}(\mathbb{R} \times \mathbb{R}^{4})},
\end{align}

\noindent and

\begin{equation}\label{12.22}
\sum_{N} \left(\sum_{M \leq N} c(E(v)) \frac{M^{1/2}}{N^{1/2}} \Big \| P_{M} v\Big \|_{L_{T}^{2} L_{x}^{8}(\mathbb{R} \times \mathbb{R}^{4})}\right)^{2} \leq c(E(v))^{2} \| v \|_{X}^{2}.
\end{equation}

\noindent Finally suppose $R \geq \frac{1}{N}$ and $M \leq N$. By H\"older's inequality and the Sobolev embedding theorem,

\begin{align}
&\sum_{M \leq N} \sum_{\frac{1}{N} \leq 2^{j} \leq M^{-1/4} N^{-3/4}} 2^{j/2} \Big \| P_{N} \left(\chi(2^{-j} R) \frac{P_{M} v}{R^{2}}\right) \Big \|_{L_{T,x}^{2}(\mathbb{R} \times \mathbb{R}^{4})}
\label{12.24} \\
\leq & \sum_{M \leq N} \sum_{2^{j} \leq M^{-1/4} N^{-3/4}} \Big \| P_{M} v \Big \|_{L_{T}^{2} L_{x}^{\infty}(\mathbb{R} \times \mathbb{R}^{4})} 2^{j/2} c(E(v)) 
\\ 
\leq & c(E(v)) \sum_{M \leq N} \frac{M^{1/8}}{N^{1/8}} \Big \| P_{M} v \Big \|_{L_{T}^{2} L_{x}^{8}(\mathbb{R} \times \mathbb{R}^{4})},
\end{align}

\noindent and by Young's inequality,

\begin{equation}\label{12.25}
\sum_{N} \left(\sum_{M \leq N} \frac{M^{1/8}}{N^{1/8}} \Big \| P_{M} v \Big \|_{L_{T}^{2} L_{x}^{8}(\mathbb{R} \times \mathbb{R}^{4})} c(E(v))\right)^{2} \leq c(E(v))^{2} \| v\|_{X}^{2}.
\end{equation}

\noindent It only remains to compute

\begin{equation}\label{12.26}
\sum_{M \leq N} \sum_{2^{j} \geq M^{-1/4} N^{-3/4}} 2^{j/2} \Big \| P_{N} \left(m \frac{P_{M} v}{\rho^{2}}\right) \Big \|_{L_{T,x}^{2}(\mathbb{R} \times \mathbb{R}^{4})}.
\end{equation}

\noindent To compute this we will use Bernstein's inequality, which by the product rule will make use of a derivative of $m$. By Einstein's equations \eqref{eewmnull}, and $|\Omega^{2}| \leq 1$,

\begin{equation}\label{12.8}
\Big |\partial_{R} m \Big | \leq \frac{f(u)^{2}}{4r} + r (\partial u)^{2},
\end{equation}

\noindent where $(\partial u)^{2}$ is shorthand for $(\partial_{T} u)^{2} + |\grad_{x} u|^{2}$, $u= Rv$. By Bernstein's inequality, the product rule, and $\rho \sim R$,

\begin{align}
&2^{j/2} \Big \| P_{N} \left(m \chi(2^{-j} \rho) \frac{P_{M} v}{R^{2}}\right) \Big \|_{L_{T,x}^{2}(\mathbb{R} \times \mathbb{R}^{4})} \notag \\ \leq & \frac{2^{j/2}}{N} \Big \| \nabla P_{N} \left(m \chi(2^{-j} \rho) \frac{P_{M} v}{R^{2}}\right) \Big \|_{L_{T,x}^{2}(\mathbb{R} \times \mathbb{R}^{4})} \label{12.27}\\
\leq &\frac{2^{j/2}}{N} \Big \| P_{N} \left( m \chi(2^{-j} \rho ) \frac{\nabla P_{M} v}{R^{2}}\right) \Big \|_{L_{T,x}^{2}(\mathbb{R} \times \mathbb{R}^{4})} \label{12.29}\\
 +& \frac{2^{j/2}}{N} \Big \| P_{N} \left(m \chi(2^{-j} \rho) \frac{P_{M} v}{R^{3}}\right) \Big \|_{L_{T,x}^{2}(\mathbb{R} \times \mathbb{R}^{4})} \label{12.30}\\
 +& \frac{2^{-j/2}}{N} \Big \| P_{N} \left(m \chi'(2^{-j} \rho) \frac{ P_{M} v}{R^{2}}\right) \Big \|_{L_{T,x}^{2}(\mathbb{R} \times \mathbb{R}^{4})} \label{12.31}\\
  +& \frac{2^{j/2}}{N} \Big \| P_{N} \left((\partial_{R} m) \chi(2^{-j} \rho) \frac{P_{M} v}{R^{2}}\right) \Big \|_{L_{T,x}^{2}(\mathbb{R} \times \mathbb{R}^{4})}. \label{12.28}
\end{align}

\noindent First take $(\ref{12.29})$.

\begin{equation}\label{12.32}
\aligned
(\ref{12.29}) \leq c(E(v)) 2^{-2j} \cdot \frac{2^{j/2}}{N} \Big \| \nabla P_{M} v \Big \|_{L_{T,x}^{2}(\mathbb{R} \times \{ 2^{j} \leq |x| \leq 2^{j + 1} \})} \\ \leq c(E(v)) \frac{2^{-j}}{N} \left(\sup_{\rho} \rho^{-1/2} \Big \| \nabla P_{M} v \Big \|_{L_{T,x}^{2}(\mathbb{R} \times \{ x : |x| \leq \rho \})}\right).
\endaligned
\end{equation}

\begin{equation}\label{12.33}
\aligned
c(E(v)) \sum_{2^{j} \geq M^{-1/4} N^{-3/4}} \frac{2^{-j}}{N} \left(\sup_{\rho} \rho^{-1/2} \Big \| \nabla P_{M} v \Big \|_{L_{T,x}^{2}(\mathbb{R} \times \{ x : |x| \leq \rho) \}}\right) \\ \leq c(E(v)) \frac{M^{1/4}}{N^{1/4}} \left(\sup_{\rho} \rho^{-1/2} \Big \| \nabla P_{M} v \Big \|_{L_{T,x}^{2}(\mathbb{R} \times \{ x : |x| \leq \rho \})}\right),
\endaligned
\end{equation}

\noindent and by Young's inequality,

\begin{equation}\label{12.34}
\aligned
\sum_{N} (c(E(v)) \sum_{M \leq N} \frac{M^{1/4}}{N^{1/4}} \left(\sup_{\rho} \rho^{-1/2} \Big \| \nabla P_{M} v \Big \|_{L_{T,x}^{2}(|x| \leq \rho)})\right)^{2} \leq c(E(v))^{2} \| v\|_{X}^{2}.
\endaligned
\end{equation}

\noindent Next take $(\ref{12.30})$. This time, by H\"older's inequality,

\begin{equation}\label{12.35}
\frac{2^{j/2}}{N} \Bigg \| \frac{\chi(2^{-j} \rho)}{R^{3}} m P_{M} v \Big \|_{L_{T,x}^{2}(\mathbb{R} \times \mathbb{R}^{4})}
\end{equation}

\begin{equation}\label{12.36}
\leq \frac{2^{-j/2}}{N} c(E(v)) \Big \| P_{M} v \Big \|_{L_{T}^{2} L_{x}^{\infty}(\mathbb{R} \times \mathbb{R}^{4})} \leq \frac{2^{-j/2} M^{1/2}}{N} c(E(v)) \Big \| P_{M} v \Big \|_{L_{T}^{2} L_{x}^{8}(\mathbb{R} \times \mathbb{R}^{4})}.
\end{equation}

\noindent Again by Young's inequality,

\begin{align}\label{12.37}
&\sum_{N} \left(\sum_{M \leq N} \sum_{2^{j} \geq C M^{-1/4} N^{-3/4}} \frac{2^{-j/2} M^{1/2}}{N} c(E(v)) \Big \| P_{M} v \Big \|_{L_{T}^{2} L_{x}^{8}(\mathbb{R} \times \mathbb{R}^{4})}\right)^{2}
\notag \\ 
&\leq c(E(v))^{2} \| v \|_{X}^{2}.
\end{align}

\noindent The estimate of $(\ref{12.31})$ is virtually identical to the estimate of $(\ref{12.30})$.\vspace{5mm}

\noindent Finally, we turn our attention to $(\ref{12.28})$. $f(0) = 0$, $f'(0) = 1$, $R v$ uniformly bounded implies that the first term in $(\ref{12.8})$,

\begin{equation}\label{12.38}
\frac{f(u)^{2}}{r} \leq \frac{c(E(v))}{R},
\end{equation}

\noindent and then

\begin{equation}
\frac{2^{j/2}}{N} \Bigg \| \frac{f(u)^{2}}{r} \chi(2^{-j} \rho) \frac{P_{M} v}{R^{2}} \Bigg \|_{L_{T,x}^{2}(\mathbb{R} \times \mathbb{R}^{4})}
\end{equation}

\noindent can be computed in exactly the same manner as $(\ref{12.30})$ or $(\ref{12.31})$.\vspace{5mm}






\noindent Now we compute

\begin{equation}
\frac{2^{j/2}}{N} \Bigg \| P_{N} \left(\chi(2^{-j} \rho) \frac{P_{M} v}{R^{2}} r (\partial u)^{2}\right) \Bigg \|_{L_{T,x}^{2}(\mathbb{R} \times \mathbb{R}^{4})}.
\end{equation}

\noindent By the radial Sobolev embedding theorem and lemma $\ref{l7}$,

\begin{align}\label{12.39}
&\frac{2^{j/2}}{N} \Big \| (1 - \phi(2^{-j + 10} \rho)) P_{N} \left(\chi(2^{-j} \rho) r^{3} (\partial v)^{2} \frac{P_{M} v}{R^{2}} \right) \Big \|_{L_{T,x}^{2}(\mathbb{R} \times \mathbb{R}^{4})} \\ 
\leq &\, \frac{2^{j/2}}{N} \Big \| (1 - \phi(2^{-j + 10} \rho)) P_{N}\left(\chi(2^{-j} \rho) (\partial v)^{2} R P_{M} v \right) \Big \|_{L_{T,x}^{2}(\mathbb{R} \times \mathbb{R}^{4})} \\
\leq  & \,\frac{2^{-j}}{N^{1/2}} \Big \| \chi(2^{-j} \rho) (\partial v)^{2} (R P_{M} v) \Big \|_{L_{T}^{2} L_{x}^{1}(\mathbb{R} \times \mathbb{R}^{4})} \\ 
\leq & \, \frac{2^{-j/2}}{N^{1/2}} E(v)^{1/2} \left(\sup_{\rho > 0} \rho^{-1/2} \| \partial v \|_{L_{T,x}^{2}(\mathbb{R} \times \{ x : |x| \leq \rho \})}\right) \Big \| R P_{M} v\Big \|_{L_{T,x}^{\infty}(\mathbb{R} \times \mathbb{R}^{4})} \\
\leq & \, \frac{2^{-j/2}}{N^{1/2}} E(v)^{1/2} \| \,v\,  \|_{X} \,\cdot\, \Big \| R P_{M} v \Big \|_{L_{T,x}^{\infty}(\mathbb{R} \times \mathbb{R}^{4})}.
\end{align}

\noindent Therefore by Young's inequality,

\begin{align}\label{12.40}
&\sum_{N} \| v \|_{X}^{2} \left(\sum_{M \leq N} \sum_{2^{j} \geq M^{-1/4} N^{-3/4}} E(v)^{1/2} \frac{2^{-j/2}}{N^{1/2}} \Big \| R P_{M} v \Big \|_{L_{T,x}^{\infty}(\mathbb{R} \times \mathbb{R}^{4})}\right)^{2} \notag \\ 
&\leq E(v) \| v\|_{X}^{4}.
\end{align}



\noindent Now if $x \in \text{supp} (\phi(2^{-j + 10} \rho))$ and $y \in \text{supp} (\chi(2^{-j} \rho))$, $|x - y| \sim 2^{j}$. Therefore, since the Littlewood - Paley projection kernel is rapidly decreasing,

\begin{align}
&\frac{2^{j/2}}{N} \Bigg \| \phi(2^{-j + 10} \rho) P_{N} \left(\chi(2^{-j} \rho) r^{3} (\partial v)^{2} \frac{P_{M} v}{R^{2}}\right) \Bigg\|_{L_{T,x}^{2}(\mathbb{R} \times \mathbb{R}^{4})} \label{12.42} \\ 
\leq &\frac{2^{-5j/2}}{N^{4}} \Big\| \chi(2^{-j} \rho) (\partial v)^{2} R (P_{M} v) \Big\|_{L_{T,x}^{2}(\mathbb{R} \times \mathbb{R}^{4})} \label{12.43}\\
\leq &\frac{2^{-2j}}{N^{2}} \Big\| R P_{M} v \Big \|_{L_{T,x}^{\infty}(\mathbb{R} \times \mathbb{R}^{4})} \left(\sup_{\rho > 0} \rho^{-1/2} \Big\| \, \partial v \, \Big \|_{L_{T,x}^{2}(\mathbb{R} \times \{ x : |x| \leq  \rho \})} \right) E(v)^{1/2}.
\end{align}

\noindent Once more, by Young's inequality and Hardy's inequality,

\begin{align}\label{12.44}
&\sum_{N} \left( \sum_{M \leq N} \left(\sum_{2^{j} \geq N^{-3/4} M^{-1/4}} \frac{2^{-2j}}{N^{2}} \| v \|_{X} E(v)^{1/2} \Big\| R P_{M} v\Big\|_{L_{T,x}^{\infty}(\mathbb{R} \times \mathbb{R}^{4})} \right) \right)^{2} \notag\\ 
&\leq E(v) \| v \|_{X}^{4}.
\end{align}

\noindent Combining $(\ref{12.16})$, $(\ref{12.20})$, $(\ref{12.22})$, $(\ref{12.25})$, $(\ref{12.34})$, $(\ref{12.37})$, $(\ref{12.40})$, and $(\ref{12.44})$ proves $(\ref{9.2})$, which in turn completes the proof of Theorem $\ref{t9}$. $\Box$

\section{Scattering for Problem II}
In this section we consider the radial wave equation

\begin{equation} \label{Wave2-2}
\left. \begin{array}{rcl}
\leftexp{4+1}{\square}\, \tilde{v} &=& \tilde{F}(\tilde{v})\,\,\,\,\,\,\,\,\,\,\,\,\,\,\, \,\,\,\,\,\,\,\,\,\,\,\,\,\,\,\,\,\,\textnormal{on}\,\, \mathbb{R}^{4+1}\\
 \tilde{v}_0 = \tilde{v} (0, x) & \textnormal{and}& \tilde{v}_1 = \ptl_T \tilde{v} (0, x) \,\,\,\,\,\,\,\, \textnormal{on}\,\, \mathbb{R}^4\\\end{array} 
\right\}
\end{equation}
where 
\begin{align}
\tilde{F}(\tilde{v}) =& \left( \frac{1}{r} \ptl_\eta r + \frac{1}{2R} \right)\ptl_\xi \tilde{v}  + \left( \frac{1}{r} \ptl_\xi r - \frac{1}{2R} \right)\ptl_\eta \tilde{v} \notag\\
&\quad + \left( \left(\frac{r}{R} \ptl_\eta r + \halb \right) - \left(\frac{r}{R} \ptl_\xi r - \halb \right) \right) \frac{\tilde{v}}{r^2} \notag\\
&\quad + \frac{R^2}{r^2} \tilde{v}^3 \zeta(R \tilde{v})
\end{align}
and $\tilde{v}$ is coupled to Einstein's equations \eqref{eewm} with $u = R \tilde{v}.$  Define,

\begin{align}\label{def-energy-II}
\tilde{E}(\tilde{v}) \fdg = \Vert \tilde{v}_0 \Vert_{H^1(\mathbb{R}^4)} + \Vert \tilde{v}_1 \Vert_{L^2(\mathbb{R}^4)} + \halb \Vert \tilde{v}_0\Vert_{L^4(\mathbb{R}^4)},
\end{align}

Suppose 
\[ \Big \vert \frac{R}{r} -1 \Bigg\vert \leq \tilde{E}(\tilde{v}) \quad
\textnormal{and} \quad \vert R \tilde{v} \vert \leq \tilde{E}(\tilde{v}). \]
Recall that the equation \ref{Wave2-2} is a partially linearized equation of the original wave maps equation obtained by the linearization of the wave equation \ref{eq:waveZ} for $Z$ (which implies 
$Z \equiv 0$).

Firstly we prove the following nonlinear Morawetz estimate for small energy.

\begin{lemma}
For any global solution $\tilde{v}$ of \eqref{Wave2-2} such that 
\[ \Big \vert \frac{R}{r} -1 \Bigg\vert \leq \tilde{E}(\tilde{v}) \quad
\textnormal{and} \quad \vert R \tilde{v} \vert \leq \tilde{E}(\tilde{v}), \]
\begin{align}
\int_{\mathbb{R}^{4+1}}  \frac{\tilde{v}^2}{\vert x \vert^3} \bar{\mu}_{\check{g}} \leq \tilde{E} (\tilde{v})
\end{align}

for $\tilde{E} (\tilde{v}) <\eps^2,$ $\eps$ sufficiently small.
\end{lemma}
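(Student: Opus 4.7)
The plan is to mirror the proof of Lemma \ref{firstmora} using the multiplier $\mathfrak{X} = \frac{1}{3}\ptl_R$ together with the lower-order correction current $J_1^\nu = \kappa\, \tilde{v}\, \grad^\nu \tilde{v} - \halb\, \tilde{v}^2\, \grad^\nu \kappa$ with $\kappa = 1/|x|$. Because now $\leftexp{4+1}{\square}\, \tilde{v} = \tilde{F}(\tilde{v}) \neq 0$, the identity $\grad_\nu \check{\mathbf{T}}^\nu{}_\mu = \tilde{F}\, \ptl_\mu \tilde{v}$ replaces the divergence-free property of the energy-momentum tensor, and a direct computation shows that the sum current $J_S = J_\mathfrak{X} + J_1$ satisfies
\[
\grad_\nu J_S^\nu \;=\; \halb\, \frac{\tilde{v}^2}{R^3} \;+\; \tilde{F}\!\left(\frac{1}{3}\ptl_R \tilde{v} + \frac{\tilde{v}}{R}\right).
\]
Integrating between $\check{\Sigma}_0$ and $\check{\Sigma}_T$, sending $T \to \infty$, invoking time reversal, and controlling the boundary terms by $\tilde{E}(\tilde{v})$ via the dominant energy condition and Hardy's inequality exactly as in Lemma \ref{firstmora}, the task reduces to showing that the nonlinear remainder $\int \tilde{F}(\tilde{v})(\frac{1}{3}\ptl_R \tilde{v} + \tilde{v}/R)\, \bar{\mu}_{\check g}$ is dominated by $\tilde{E}(\tilde{v})$ up to a small multiple of the Morawetz integral on the left that can be absorbed.

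I would then split $\tilde{F}$ into the three structural pieces from \eqref{Wave2}: the cubic self-interaction $\frac{R^2}{r^2}\tilde{v}^3 \zeta(R\tilde{v})$; the potential-like piece with coefficient $\bigl(\tfrac{r}{R}\ptl_\eta r + \halb\bigr) - \bigl(\tfrac{r}{R}\ptl_\xi r - \halb\bigr)$; and the two transport pieces with coefficients $\tfrac{1}{r}\ptl_\eta r + \tfrac{1}{2R}$ and $\tfrac{1}{r}\ptl_\xi r - \tfrac{1}{2R}$. The cubic piece is the easiest: paired with $\tilde{v}/R$ it yields $\tilde{v}^3 (R\tilde{v})/r^2$, and the bootstrap hypotheses $|R\tilde{v}|, |R/r - 1| \leq \tilde{E}(\tilde{v})$ convert it into an $\tilde{E}$-multiple of the Morawetz density $\tilde{v}^2/R^3$; paired with $\ptl_R \tilde{v}$, one applies Cauchy--Schwarz to deposit one factor into the conserved energy (bounded by $\tilde{E}$) while the other contributes the same small prefactor. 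The potential-like coefficient vanishes identically when $r \equiv R$ with $\ptl_\xi r \equiv \halb$ and $\ptl_\eta r \equiv -\halb$; the $r$-wave equation \eqref{eq:waver} reduces in Problem II (where $Z \equiv 0$ by the linearization) to $\ptl^2_{\xi\eta} r = \tfrac{1}{4} f^2(u)/r$, and since $|f^2(u)| \lesssim (R\tilde{v})^2$ this gives pointwise $O(\tilde{E})$ bounds on $\ptl_\eta r + \halb$ and $\ptl_\xi r - \halb$ after integration along null rays from the axis, supplying the small prefactor required for absorption.

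The main obstacle is the transport pair, where a naive pairing of $\ptl_\xi \tilde{v}$ or $\ptl_\eta \tilde{v}$ with $\ptl_R \tilde{v}$ produces a cross-product of first derivatives with no manifest sign. I would first rewrite each coefficient in the form
\[
\frac{1}{r}\ptl_\eta r + \frac{1}{2R} \;=\; \frac{1}{r}\!\left(\ptl_\eta r + \halb\right) \;+\; \halb\!\left(\frac{1}{R} - \frac{1}{r}\right),
\]
so that each piece carries an explicit $O(\tilde{E})$ weight, from the preceding step and from $|R/r - 1| \leq \tilde{E}(\tilde{v})$. Then I would integrate by parts in $\xi$ and $\eta$ to move one derivative off $\tilde{v}$, producing on one hand a $\ptl^2_{\xi\eta} r$ factor (quadratic in $\tilde{v}$, already controlled as above) and on the other hand terms of the form $\tilde{E}\cdot(\partial \tilde{v})^2$ that can be absorbed into the conserved energy. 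Combining all contributions then yields
\[
\int_{\mathbb{R}^{4+1}} \frac{\tilde{v}^2}{|x|^3}\, \bar{\mu}_{\check g} \;\lesssim\; \tilde{E}(\tilde{v}) \;+\; C\, \tilde{E}(\tilde{v})\!\int_{\mathbb{R}^{4+1}} \frac{\tilde{v}^2}{|x|^3}\, \bar{\mu}_{\check g},
\]
which for $\tilde{E}(\tilde{v}) < \varepsilon^2$ sufficiently small closes by absorption into the left-hand side, giving the claimed estimate.
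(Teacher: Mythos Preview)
Your overall scheme is the paper's: multiply by $\tfrac{1}{3}\partial_R\tilde v + \tilde v/R$, integrate, control the boundary flux by $\tilde E$, and absorb the nonlinear remainders. The paper presents this as a direct computation with the Morawetz quantity $M(T)=\int \tilde v_R\tilde v_T R^3\,dR+\tfrac{3}{2}\int \tilde v_T\tilde v\,R^2\,dR$, which is the same identity in different clothing.

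The gap is in your treatment of the transport pair. You write that after integrating by parts you obtain ``terms of the form $\tilde E\cdot(\partial\tilde v)^2$ that can be absorbed into the conserved energy.'' But these are \emph{spacetime} integrals $\int\!\!\int \tilde E\,(\partial\tilde v)^2 R^3\,dR\,dT$; the conserved energy only controls $\sup_T\int(\partial\tilde v)^2 R^3\,dR$, so there is nothing to absorb into. A mere pointwise $O(\tilde E)$ bound on the coefficient is not enough, and integrating by parts in $\xi,\eta$ produces second derivatives of $\tilde v$ rather than helping. What the paper does instead is pass to null coordinates and exploit two facts simultaneously: (i) the integral representation $\partial_\eta r+\tfrac12=\int_\eta^\xi \tfrac{f^2(u)}{4r}\,d\xi'$ from the $r$-equation, and (ii) the characteristic energy flux bound $\int (\partial\tilde v)^2 R^3\,d\xi\lesssim \tilde E$ for each fixed $\eta$. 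Pulling the sup in $R$ of $\tfrac{1}{R}\int_0^R s\tilde v^2\,ds$ out of the $\xi$-integral and using (ii) converts the transport term into $\tilde E(\tilde v)\int\!\!\int \tilde v^2\,dR\,dT$, i.e.\ a small multiple of the Morawetz integral itself, which \emph{can} be absorbed. This null-flux step is the missing idea.

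A smaller point: for the cubic piece paired with $\partial_R\tilde v$, your Cauchy--Schwarz recipe does not obviously close (the derivative factor becomes an uncontrolled spacetime integral again). The paper instead writes $\tilde v^3\partial_R\tilde v=\tfrac14\partial_R(\tilde v^4)$, integrates by parts in $R$, and then uses $|R\tilde v|\le\epsilon$ twice to land back on $\epsilon^2\int\!\!\int\tilde v^2\,dR\,dT$.
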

\begin{proof}
\noindent We shall use the estimates $\vert \frac{R}{r} -1 \vert, \vert R \tilde{v} \vert \leq \tilde{E}(\tilde{v})$ throughout.  
Define the Morawetz quantity

\begin{equation}\label{1.3}
M(T) \fdg = \int \tilde{v}_{R} \tilde{v}_{T} R^{3} dR + \frac{3}{2} \int \tilde{v}_{T} \tilde{v} R^{2} dR.
\end{equation}

\noindent Taking the time derivative,

\begin{align}\label{1.4}
\frac{d}{dt} (M(t)) =& \int \partial_{R} (\tilde{v}_{T}) \tilde{v}_{T} R^{3} dR + \frac{3}{2} \int \tilde{v}_{T}^{2} R^{2} dR + \int \tilde{v}_{R} \tilde{v}_{TT} R^{3} dR \notag\\
&\quad +\frac{3}{2} \int \tilde{v}_{TT} \tilde{v} R^{2} dR.
\end{align}

\noindent Integrating by parts,

\begin{equation}\label{1.5}
\int \partial_{R} (\tilde{v}_{T}) \tilde{v}_{T} R^{3} dR + \frac{3}{2} \int \tilde{v}_{T}^{2} R^{2} dR = 0.
\end{equation}

\noindent Now using \eqref{Wave2-2} to split $\tilde{v}_{TT}$,

\begin{align}
&\int \tilde{v}_{R} (\tilde{v}_{RR} + \frac{3}{R} \tilde{v}_{R}) R^{3} dR + \frac{3}{2} \int (\tilde{v}_{RR} + \frac{3}{R} \tilde{v}_{R}) \tilde{v} R^{2} dR \label{1.6}\\
&= -\frac{3}{2} \int \tilde{v}_{R}^{2} R^{2} dR + 3 \int \tilde{v}_{R}^{2} R^{2} dR - \frac{3}{2} \int \tilde{v}_{R}^{2} R^{2} dR \notag\\
 &\quad\quad - 3 \int \tilde{v}_{R} u R dR + \frac{9}{2} \int \tilde{v}_{R} \tilde{v} R dR \label{1.7}\\
&= -\frac{3}{4} \int \tilde{v}^{2} dR.
\end{align}

\noindent Therefore,

\begin{equation}\label{1.8}
\int \int \tilde{v}^{2} dR dT = \frac{4}{3} (M(T) - M(0)) + \int \int F(\tilde{v}) \tilde{v}_{R} R^{3} dR dT + \int \int F(\tilde{v}) \tilde{v} R^{2} dR dT.
\end{equation}

\noindent By Hardy's inequality and conservation of energy,

\begin{equation}\label{1.9}
|M(T) - M(0)| \leq \tilde{E}(\tilde{v}).
\end{equation}

\noindent First consider

\begin{equation}\label{1.10}
\int \int \tilde{F}(\tilde{v}) \tilde{v}_{R} R^{3} dR dT.
\end{equation}

\noindent Making a change of variables

\begin{equation}\label{1.11}
\aligned
\int \int \frac{1}{R} (\partial_{\eta} r - \frac{1}{2}) (\partial_{\xi} \tilde{v})(\partial_{R} \tilde{v}) R^{3} dR dT =& \int \int \frac{1}{R} (\partial_{\eta} r - \frac{1}{2}) (\partial_{\xi} \tilde{v})(\partial_{R} \tilde{v}) R^{3} d\eta d\xi \\
\leq& \int (\sup_{R > 0} \frac{1}{R} \int_{0}^{R} f^{2}(\tilde{v}) s ds)(\int (\partial \tilde{v})^{2} d\xi) d\eta \\ \leq& \tilde{E}(\tilde{v}) \int \int \tilde{v}^{2} dR dT.
\endaligned
\end{equation}

\noindent Similarly,

\begin{equation}\label{1.12}
\int \int \frac{1}{R} (\partial_{\xi} r + \frac{1}{2})(\partial_{\eta} \tilde{v})(\partial_{R} \tilde{v}) R^{3} dR dT \leq \tilde{E}(\tilde{v}) \int \int \tilde{v}^{2} dR dT.
\end{equation}

\noindent Next,

\begin{align}\label{1.13}
\int \int (\partial_{\eta} r + \frac{1}{2}) \frac{\tilde{v}}{R^{2}} \tilde{v}_{R} R^{3} dR dT \leq& \int \int \frac{1}{R} (\partial_{\eta} r + \frac{1}{2}) \tilde{v}_{R}^{2} R^{3} dR dT \notag \\
&\quad + \int \int (\partial_{\eta} r + \frac{1}{2}) \tilde{v}^{2} dR dT \notag\\
\leq& \tilde{E}(\tilde{v}) \int \int \tilde{v}^{2} dR dT + \epsilon \int \int \tilde{v}^{2} dR dT.
\end{align}

\noindent Likewise,

\begin{align}\label{1.14}
\int \int (\partial_{\xi} r - \frac{1}{2}) \frac{\tilde{v}}{R^{2}} \tilde{v}_{R} R^{3} dR dT \leq \tilde{E}(\tilde{v}) \int \int \tilde{v}^{2} dR dT + \epsilon \int \int \tilde{v}^{2} dR dT.
\end{align}

\noindent Expanding $\zeta(R\tilde{v}) = c_{1} + c_{2} (R\tilde{v}) +  \cdots$, then integrating by parts

\begin{equation}\label{1.16}
\aligned
c_{1} \int \int \frac{R^{2}}{r^{2}} \tilde{v}^{3} \tilde{v}_{R} R^{3} dR dT = \frac{c_{1}}{4} \int \int \frac{R^{5}}{r^{2}} \partial_{R}(\tilde{v}^{4}) dR dT \\
= -\frac{5c_{1}}{4} \int \int \frac{R^{4}}{r^{2}} \tilde{v}^{4} dR dT - \frac{c_{1}}{2} \int \int \frac{R^{5}}{r^{3}} (\partial_{R} r) \tilde{v}^{4} dR dT.
\endaligned
\end{equation}

\noindent Then by the radial Sobolev embedding theorem, $R \tilde{v} \leq \epsilon$. Therefore,

\begin{equation}
(\ref{1.16}) \lesssim \epsilon^{2} \int \int \tilde{v}^{2} dR dT.
\end{equation}

\noindent Now we turn to

\begin{equation}\label{1.17}
\int \int \tilde{F}(\tilde{v}) \tilde{v} R^{2} dR dT.
\end{equation}

\noindent First,

\begin{equation}\label{1.18}
\aligned
&\int \int \frac{1}{R} (\partial_{\eta} r - \frac{1}{2}) (\partial_{\xi} \tilde{v}) \tilde{v} R^{2} dR dT \\
&\leq \int \int \frac{1}{R} (\partial_{\eta} r - \frac{1}{2}) (\partial_{\xi} \tilde{v})^{2} R^{3} dR dT + \int \int (\partial_{\eta} r - \frac{1}{2}) \tilde{v}^{2} dR dT \\
&\leq \tilde{E}(\tilde{v}) \int \int \tilde{v}^{2} dR dT + \epsilon \int \int \tilde{v}^{2} dR dT.
\endaligned
\end{equation}

\noindent Similarly,

\begin{equation}\label{1.19}
\aligned
\int \int \frac{1}{R} (\partial_{\xi} r + \frac{1}{2}) (\partial_{\eta} \tilde{v}) \tilde{v} R^{2} dR dT
\leq \tilde{E}(\tilde{v}) \int \int \tilde{v}^{2} dR dT + \epsilon \int \int \tilde{v}^{2} dR dT.
\endaligned
\end{equation}
\noindent Therefore, by the fundamental theorem of calculus,

\begin{equation}\label{1.21}
\int \int \tilde{v}^{2} dR dT \leq \tilde{E}(\tilde{v}) + \epsilon \int \int \tilde{v}^{2} dR dT + \tilde{E}(\tilde{v}) \int \int \tilde{v}^{2} dR dT.
\end{equation}

\noindent Therefore, for $\tilde{E}(\tilde{v})$ sufficiently small,

\begin{equation}\label{1.22}
\int \int \tilde{v}^{2} dR dT \leq E(\tilde{v}).
\end{equation}
\end{proof}
\noindent Now then, it is necessary to prove scattering.

\begin{theorem}\label{t1.1}
The globally regular solution to $\eqref{Wave2-2}$ scatters forward and backward in time.
\end{theorem}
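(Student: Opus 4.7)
The plan is to upgrade the nonlinear Morawetz bound just established into quantitative decay of the coefficients appearing in $\tilde F(\tilde v)$, and then run a Cauchy criterion in the energy space to produce a scattering profile.

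First, I would feed the Morawetz estimate $\int\!\!\int \tilde v^{2}/|x|^{3}\,\bar{\mu}_{\check g} \lesssim \tilde E(\tilde v)$ into the Einstein constraint \eqref{eq:waver}, which in Problem II reads $\partial^2_{\xi\eta}r = f^{2}(R\tilde v)/(4r)$ since $Z\equiv 0$. Using $f(0)=0$, $f_u(0)=1$, $|R\tilde v|\le \tilde E(\tilde v)$, and $|R/r-1|\le \tilde E(\tilde v)$ yields the pointwise bound $|\partial^2_{\xi\eta}r|\lesssim R\,\tilde v^{2}$. Changing coordinates between $(\xi,\eta)$ and $(T,R)$ and invoking the Morawetz bound gives
\begin{align*}
\int\!\!\int |\partial^2_{\xi\eta} r|\, d\xi\, d\eta \;\lesssim\; \int\!\!\int \tilde v^{2}\, dT\, dR \;\lesssim\; \tilde E(\tilde v).
\end{align*}
Integrating along each null direction starting from the axis $\Gamma$, on which the axis conditions give $\partial_\xi r = \tfrac12$ and $\partial_\eta r = -\tfrac12$, I recover the two null-integrability statements emphasized in the introduction.

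Next I would decompose every coefficient of $\tilde F(\tilde v)$ to isolate those small quantities. Writing, for instance,
\begin{align*}
\frac{1}{r}\partial_\eta r + \frac{1}{2R} \;=\; \frac{1}{r}\Big(\partial_\eta r+\tfrac{1}{2}\Big) + \frac{r-R}{2rR},
\end{align*}
and the analogous decomposition for the coefficient of $\partial_\eta \tilde v$, each linear coefficient in $\tilde F$ splits into a term controlled by $\sup_\xi |\partial_\eta r+\tfrac12|$ (resp.\ $\sup_\eta |\partial_\xi r-\tfrac12|$) and a term of size $\tilde E(\tilde v)$ through $|R/r-1|$. The cubic term $R^{2} r^{-2} \tilde v^{3}\zeta(R\tilde v)$ is handled directly by combining the Morawetz bound on $\tilde v^{2}$ with $|R\tilde v|\le\tilde E(\tilde v)$. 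Together these imply that for every $\varepsilon > 0$ there is $T_0$ such that the contribution of $\tilde F(\tilde v)$ to the Duhamel formula over $[T_0,\infty)$ is bounded by $\varepsilon$ in the energy norm. Applying the endpoint Strichartz estimate of Theorem \ref{t1} to $\int_{T_1}^{T_2}\frac{\sin((T-s)\sqrt{-\Delta})}{\sqrt{-\Delta}}\tilde F(\tilde v(s))\, ds$ then gives a Cauchy criterion in $\dot H^{1}\times L^{2}$, producing the scattering profile $(\tilde v^{\infty}_0,\tilde v^{\infty}_1)\in \dot H^{1}\times L^{2}$; time reversal handles the backward statement.

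The main obstacle is the treatment of the derivative terms, typified by $\bigl(r^{-1}\partial_\eta r+1/(2R)\bigr)\partial_\xi\tilde v$, which do not lie in $L^{1}_T L^{2}_x$ under energy bounds alone. The key point is a null-coordinate pairing: the $L^{1}_\eta L^{\infty}_\xi$ control of $\partial_\eta r+\tfrac12$ obtained from the integrability step dualizes the $L^{\infty}_\eta L^{2}_\xi$ bound on $\partial_\xi\tilde v$ supplied by conservation of energy (and symmetrically in $\xi\leftrightarrow\eta$). The smallness of $\tilde E(\tilde v)$ allows this pairing to close as a bootstrap, and it is precisely this step that converts the metric integrability into decay of $\tilde F$ in the energy norm at large times, finishing the scattering argument.
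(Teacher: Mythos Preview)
Your outline shares the paper's central input---the Morawetz bound forces $\int\!\!\int|\partial^2_{\xi\eta}r|\,d\xi\,d\eta<\infty$ and hence the null-time integrability of $\partial_\eta r+\frac12$ and $\partial_\xi r-\frac12$---but the mechanism you propose for converting this into scattering differs from the paper's and leaves a gap as written.

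The paper does not try to place $\tilde F(\tilde v)$ in an $L^1_TL^2_x$-type space and then appeal to Theorem~\ref{t1}. Instead it sets $\bar v$ equal to the Duhamel piece (zero data at $T_0$, forcing $\tilde F$), differentiates the augmented energy $\bar E(\bar v)=\|\bar v_T\|_{L^2}^2+\|\nabla\bar v\|_{L^2}^2+\frac12\|\bar v\|_{L^4}^4$ in $T$, and estimates the resulting \emph{spacetime pairing} $\int\!\!\int\tilde F(\tilde v)\,\bar v_T\,R^3\,dR\,dT$ directly. That bilinear integral passes cleanly to $(\xi,\eta)$ coordinates, and each derivative term closes by the same computation used inside the Morawetz lemma. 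For the cubic term the paper writes $\tilde v=\bar v+w$ with $w=S(T-T_0)(\tilde v(T_0),\tilde v_T(T_0))$ the free evolution and uses the radial Strichartz bound $\||x|^{1/2}w\|_{L^2_TL^\infty_x}\lesssim \tilde E(\tilde v)^{1/2}$ on $w$---a step your sketch omits. The outcome is the bootstrap $\sup_{T\ge T_0}\bar E(\bar v)\le\epsilon\sup_{T\ge T_0}\bar E(\bar v)+\int_{T\ge T_0}\!\!\int\tilde v^2\,dR\,dT$, which closes for small data and gives scattering since the Morawetz tail vanishes as $T_0\to\infty$.

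Your null pairing $L^1_\eta L^\infty_\xi\times L^\infty_\eta L^2_\xi$ is exactly the structure that makes the paper's spacetime-integral estimate work, but it does \emph{not} yield a bound on $\|\tilde F\|_{L^1_TL^2_x}$: the $T$-foliation and the $\eta$-foliation are transverse, so $L^1_\eta L^2_\xi$ is not $L^1_TL^2_x$, and Theorem~\ref{t1} offers no estimate with null-adapted forcing norms. Separately, the $1/r$ weight in the coefficient is singular at the axis and is not absorbed by the bare integrability of $\partial_\eta r+\frac12$; the paper handles this by writing $\frac{1}{R}|\partial_\eta r+\frac12|\lesssim\frac{1}{R}\int_0^R \tilde v^2\,s\,ds$ along the outgoing null line rather than splitting the weight off. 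Your ingredients are the right ones; the fix is to deploy the null pairing inside the energy-derivative computation of $\bar E(\bar v)$ rather than inside a Duhamel/Strichartz bound.
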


\noindent \emph{Proof:} To prove scattering it suffices to show that the solution to

\begin{equation}\label{1.23}
\left. \begin{array}{rcl}
\leftexp{4+1}{\square}\, \bar{v} &=&F(\tilde{v})\,\,\,\,\,\,\,\,\,\,\,\,\,\,\, \,\,\,\,\,\,\,\,\,\,\,\,\,\,\,\,\,\,\,\,\,\,\,\,\,\,\textnormal{on}\,\, \mathbb{R}^{4+1}\\
 \bar{v} (T_0, x)=0 & \textnormal{and}& \ptl_T \bar{v} (T_0, x)=0 \,\,\,\,\,\,\,\,\quad \,\textnormal{on}\,\, \mathbb{R}^4\\\end{array} 
\right\}
\end{equation}

where $F(\tilde{v})$ is
\begin{align}
F(\tilde{v}) =&  \left(\frac{1}{r}\partial_{\eta} r + \frac{1}{2R}\right) (\partial_{\xi} \tilde{v}) + \left(\frac{1}{R}\partial_{\xi} r - \frac{1}{2R}\right) (\partial_{\eta} \tilde{v}) \\
&\quad + \left(\frac{r}{R} \partial_{\eta} r + \frac{1}{2} - \frac{r}{R}\partial_{\xi} r - \frac{1}{2}\right) \frac{\tilde{v}}{r^{2}} + \frac{R^{2}}{r^{2}} \tilde{v}^{3} \zeta(R\tilde{v}), 
\end{align}

\noindent has a solution with \[ \sup_{T \geq T_{0}} [\| \bar{v}(T) \|_{\dot{H}^{1}} + \| \bar{v}_{T}(T) \|_{L^{2}}] \rightarrow 0\] as $T_{0} \rightarrow \infty$. Then

\begin{equation}\label{1.24}
\tilde{v}(T) = \bar{v}(T) + S(T - T_{0}) (\tilde{v}(T_{0}), \tilde{v}_{T}(T_{0})) = \bar{v}(T) + w(T),
\end{equation}

\noindent where $S(t)(\tilde{v}_0, \tilde{v}_1)$ is the solution to the wave equation $\Box w = 0$ with initial data $(\tilde{v}_0, \tilde{v}_1)$. In particular, this implies $\bar{E}(\bar{v}) \leq \tilde{E}(\tilde{v})$, where

\begin{equation}
\bar{E}(\bar{v}) = \| \bar{v}_{T} \|_{L^{2}}^{2} + \| \nabla \bar{v}\|_{L^{2}}^{2} + \halb \| \bar{v} \|_{L^{4}}^{4}.
\end{equation}

 Now compute

\begin{equation}\label{1.25}
\frac{d}{dT} [\frac{1}{2} \langle \bar{v}_{T}, \bar{v}_{T} \rangle + \frac{1}{2} \langle \nabla \bar{v}, \nabla \bar{v} \rangle ] = -\langle \bar{v}_{T}, \tilde{F}(\tilde{v}) \rangle.
\end{equation}

where $\ip{x}{y} = \int_{R^{4+1}} x \cdot y dRdT.$

\noindent Then as in $(\ref{1.11})$ and $(\ref{1.12})$, using $\vert \frac{R}{r} -1 \vert \leq \tilde{E}(\tilde{v})$, by the dominated convergence theorem,

\begin{equation}\label{1.26}
\lim_{T_{0} \rightarrow \infty} \int_{T_{0}}^{\infty} \int \frac{1}{R} (\partial_{\eta} - \frac{1}{2}) (\partial_{\xi} \tilde{v}) \cdot \bar{v}_{T} dR dT = 0,
\end{equation}

\noindent and

\begin{equation}\label{1.27}
\lim_{T_{0} \rightarrow \infty} \int_{T_{0}}^{\infty} \int \frac{1}{R} (\partial_{\xi} + \frac{1}{2}) (\partial_{\eta} \tilde{v}) \cdot \bar{v}_{T} dR dT = 0.
\end{equation}

\noindent As in $(\ref{1.13})$ and $(\ref{1.14})$,

\begin{equation}\label{1.28}
\lim_{T_{0} \rightarrow \infty} \int_{T_{0}}^{\infty} \int (\partial_{\eta} r + \frac{1}{2}) \frac{\tilde{v}}{R^{2}} \bar{v}_{T} dR dT = 0,
\end{equation}

\begin{equation}\label{1.29}
\lim_{T_{0} \rightarrow \infty} \int_{T_{0}}^{\infty} \int (\partial_{\xi} r - \frac{1}{2}) \frac{\tilde{v}}{R^{2}} \bar{v}_{T} dR dT = 0.
\end{equation}

\noindent To compute

\begin{equation}\label{1.30}
\int \int \frac{R^{2}}{r^{2}} \tilde{v}^{3} \zeta(R\tilde{v}) R^{3} \bar{v}_{T} dR dT,
\end{equation}

\noindent again expand out $\zeta(R\tilde{v})$. Then

\begin{equation}\label{1.31}
c_{1} \int \frac{R^{2}}{r^{2}} \bar{v}^{3} \bar{v}_{T} R^{3} dR = \frac{d}{dT} \left(\frac{c_{1}}{4} \int \frac{R^{2}}{r^{2}} \bar{v}^{4} R^{3} dR \right) + \frac{c_{1}}{2} \int \frac{R^{2}}{r^{3}} \bar{v}^{4} R^{3} (\partial_{T} r) dR.
\end{equation}

\noindent Now by the radial Sobolev embedding theorem, combined with the Morawetz estimates,

\begin{equation}\label{1.32}
\int_{T \geq T_{0}} \int \frac{R^{2}}{r^{3}} \bar{v}^{4} R^{3} (\partial_{T} r) dR dT \rightarrow 0
\end{equation}

\noindent as $T_{0} \rightarrow \infty$. Next, using the radial Strichartz estimate

\begin{equation}
\Big\Vert |x|^{1/2} S(t)(\tilde{v}_0, \tilde{v}_1) \Big\Vert_{L_{t}^{2} L_{x}^{\infty}} \leq \Vert  \tilde{v}_0 \Vert_{\dot{H}^{1}} + \Vert \tilde{v}_1 \Vert_{L^{2}},
\end{equation}

\noindent so

\begin{align}\label{1.33}
\int_{T \geq T_{0}} \int \frac{R^{2}}{r^{2}} \tilde{v}^{2} w \bar{v}_{T} R^{3} dR dT \leq& \left(\int_{T \geq T_{0}} \int \tilde{v}^{2} dR dT \right)^{1/2} \notag     \\
 & \quad \cdot\| R^{1/2} w \|_{L_{T}^{2} L_{x}^{\infty}} \| \bar{v}_{T} \|_{L_{T}^{\infty} L_{x}^{2}} \| R \tilde{v} \|_{L_{T, x}^{\infty}}.
\end{align}

\noindent If 

\begin{equation}\label{1.34}
E(\bar{v}(T)) = \| \bar{v}_{T} \|_{L^{2}}^{2} + \| \nabla \bar{v}(T) \|_{L^{2}}^{2} + \halb \| \bar{v}(T) \|_{L^{4}}^{4},
\end{equation}

\noindent then the Sobolev embedding theorem implies that for small energy,

\begin{equation}\label{1.35}
(\ref{1.33}) \leq (\sup_{T \geq T_{0}} E(\bar{v}(T)))^{1/2} \tilde{E}(\tilde{v}) \left(\int_{T \geq T_{0}} \int \tilde{v}^{2} dR dT \right)^{1/2}.
\end{equation}

\noindent Also,

\begin{equation}\label{1.36}
\aligned
\int_{T \geq T_{0}} \int \frac{R^{2}}{r^{2}} \tilde{v} \cdot w \cdot \bar{v} \bar{v}_{T} R^{3} dR dT \leq & \left(\int_{T \geq T_{0}} \int \tilde{v}^{2} dR dT \right)^{1/2} \\
& \cdot \| R^{1/2} w \|_{L_{T}^{2} L_{x}^{\infty}} \| \bar{v}_{T} \|_{L_{T}^{\infty} L_{x}^{2}} \| R \bar{v} \|_{L_{T, x}^{\infty}} \\ 
\leq & \tilde{E}(\tilde{v}) \left(\sup_{T \geq T_{0}} E(\bar{v}(T)) \right).
\endaligned
\end{equation}

\noindent Finally,

\begin{equation}\label{1.37}
\aligned
\int_{T \geq T_{0}} \int \frac{R^{2}}{r^{2}} w \bar{v}^{2} \bar{v}_{T} R^{3} dR dT \leq & \Big\Vert |x|^{1/2} w \Big\Vert_{L_{T}^{2} L_{x}^{\infty}} \| v_{T} \|_{L_{T}^{\infty} L_{x}^{2}} \\
& \quad \cdot \left(\int \tilde{v}^{2} + w^{2} dR dT \right)^{1/2} \| |x| \bar{v} \|_{L_{T, x}^{\infty}} \\ 
\leq & \tilde{E}(\tilde{v}) \left(\sup_{T \geq T_{0}} E(\bar{v})(T)\right).
\endaligned
\end{equation}

\noindent Therefore we have proved

\begin{equation}\label{1.38}
\sup_{T \geq T_{0}} E(\bar{v}(T)) \leq \epsilon \left(\sup_{T \geq T_{0}} E(\bar{v}(T)) \right) + \left(\int_{T \geq T_{0}} \int \tilde{v}^{2} dR dT \right).
\end{equation}

\noindent Since

\begin{equation}\label{1.39}
\int_{T \geq T_{0}} \int \tilde{v}^{2} dR dT \rightarrow 0
\end{equation}

\noindent as $T_{0} \rightarrow \infty$, we have scattering. $\Box$
\subsection*{Acknowledgements}
The authors are grateful to Jalal Shatah for answering their questions related to his work.  The authors are also thankful to the referee for the thoughtful comments and for suggesting an error in a previous work. 

\bibliography{final.bib}

\begin{thebibliography}{1}
\bibitem{test} A. B. C. Test, \textit{On a Test.} J. of Testing
\textbf{88} (2000), 100--120.
\bibitem{latex} G. Gr\"atzer, \textit{Math into \LaTeX.} 3rd Edition,
Birkh\"auser, 2000.
\end{thebibliography}

\begin{thebibliography}{1}

\bibitem{AGS}
L.~Andersson, N.~Gudapati, and J.~Szeftel.
\newblock Global regularity for 2+1 dimensional {E}instein-wave map system.
\newblock {\em arXiv}, 1501.00616, 2015.

\bibitem{G2spacetimes}
B.~K. Berger, P.T. Chru\'sciel, and V.~Moncrief.
\newblock On ``asymptotically flat'' space-times with ${G}_2-$ invariant {Cauchy}
  surfaces.
\newblock {\em Ann. Phys.}, 237:322--354, 1995.

\bibitem{Bruhat_Geroch_classic}
Y.~Choquet-Bruhat and R.~Geroch.
\newblock Global aspects of the {C}auchy problem in general relativity.
\newblock {\em Comm. Math. Phys.}, 14:329--335, 1969.

\bibitem{chris_tah2}
D.~Christodoulou and A.S.~Tahvildar Zadeh.
\newblock On the asymptotic behavior of spherically symmetric wave maps.
\newblock {\em Duke Math. J.}, 71(1):31--69, 1993.

\bibitem{chris_tah1}
D.~Christodoulou and A.S.~Tahvildar Zadeh.
\newblock On the regularity of spherically symmetric wave maps.
\newblock {\em Comm. Pure Appl. Math.}, 46(7):1041--1091, 1993.

\bibitem{diss}
N.~Gudapati.
\newblock The {C}auchy problem for energy critical self-gravitating wave maps.
\newblock {\em Dissertation (FU Berlin)}, 2013.

\bibitem{keel_tao}
M.~Keel and T.~Tao.
\newblock Endpoint {S}trichartz estimates.
\newblock {\em Amer. J. Math.}, 120:955--980, 1998.

\bibitem{kennig_merle2008}
C.~Kennig and F.~Merle.
\newblock Global well posedness, scattering and blow-up for the energy-critical
  focusing nonlinear wave equation.
\newblock {\em Acta. Math}, 201(2):147--212, 2008.

\bibitem{krieg_wmcrit}
J.~Krieger.
\newblock Global regularity of wave maps from $\mathbb{R}^{2+1}$ to
  $\mathbb{H}^2$.
\newblock {\em Comm. Math. Phys.}, 250(4):507--580, 2004.

\bibitem{krieg_schlag_ccwm}
J.~Krieger and W.~Schlag.
\newblock Concentration compactness for critical wave maps.
\newblock {\em EMS {M}onographs in {M}athematics}, 2012.

\bibitem{shatah_struwe}
J.~Shatah and M.~Struwe.
\newblock Geometric wave equations.
\newblock {\em AMS}, 2000.

\bibitem{jal_tah1}
J.~Shatah and A.S.~Tahvildar Zadeh.
\newblock Regularity of harmonic maps from the {M}inkowski space into
  rotationally symmetric manifolds.
\newblock {\em Comm. Pure Appl. Math.}, 45:1041--1091, 1992.

\bibitem{jal_tah}
J.~Shatah and A.S.~Tahvildar Zadeh.
\newblock Cauchy problem for equivariant wave maps.
\newblock {\em Comm. Pure Appl. Math.}, 47(5):719--754, 1994.

\bibitem{sogge_book}
C.~Sogge.
\newblock Lectures on nonlinear wave equation.
\newblock {\em International Press}, 2nd Edition, 2008.

\bibitem{sterb_tata_long}
J.~Sterbenz and D.~Tataru.
\newblock Energy dispersed large data wave maps in 2+1 dimensions.
\newblock {\em Comm. Math. Phys.}, 298:139--230, 2010.

\bibitem{sterb_tata_main}
J.~Sterbenz and D.~Tataru.
\newblock Regularity of wave maps in dimension 2+1.
\newblock {\em Comm. Math. Phys.}, 298:231--264, 2010.

\bibitem{struwe_wmsurvey}
M.~Struwe.
\newblock Wave maps with and without symmetries.
\newblock {\em Clay lecture notes}, 2008.

\bibitem{Tao}
T.~Tao.
\newblock Global regularity of wave maps i--vii.
\newblock {\em arXiv e-prints}.

\bibitem{Tao_book}
T.~Tao.
\newblock Nonlinear dispersive equations.
\newblock {\em CBMS}, 106, 2006.

\bibitem{tat_besovl}
D.~Tataru.
\newblock On global existence and scattering for the wave maps equation.
\newblock {\em Amer. J. Math.}, 123(3):385--423, 2001.

\bibitem{taylor_book}
M.~Taylor.
\newblock Measure Theory and Integration.
\newblock {\em American Mathematical Society}, Graduate Studies in Mathematics, 76, 2006.


\end{thebibliography}
\bibliographystyle{plain}

\end{document}